\newif\iffinal
\DeclareMathOperator\Geo{Geo}
\def\scalezero{L}
\def\expdefect{4}
\def\expfractal{{10}}
\begin{document}

\title{A new proof for percolation phase transition on stretched lattices}

\date{\today}
\author{
  Marcelo Hil\'ario \thanks{UFMG -  Departement of Mathematics, Av.\ Antonio Carlos 6627, 31270-901 Belo Horizonte, MG - Brazil.}
  \and
  Marcos S\'a \thanks{%Email: \ \texttt{marcosva@ufba.br};
    IMPA - Estrada Dona Castorina 110, 22460-320 Rio de Janeiro, RJ - Brazil and UFBA - Instituto de Matem\'atica, Av.\ Milton Santos, 40170-110  Salvador, BA - Brazil.}
  \and
  Remy Sanchis \footnotemark[1]
  \and
  Augusto Teixeira \thanks{%Email: \ \texttt{augusto@impa.br}; \
    IMPA - Estrada Dona Castorina 110, 22460-320 Rio de Janeiro, RJ - Brazil and IST - University of Lisbon, Portugal.}}
\maketitle

\begin{abstract}
  In this article, we revisit the phase transition for percolation on randomly stretched lattices.
  Starting with the usual square grid, keep all vertices untouched while erasing edges according to the following procedure: for every integer $i$, the entire column of vertical edges contained in the line $\{ x = i \}$ is removed independently of other columns with probability $\rho > 0$.
  Similarly, for every integer $j$, the entire row of horizontal edges contained in the line $\{ y = j\}$ is removed independently of other rows with probability $\rho$.
  On the remaining random lattice, we perform Bernoulli bond percolation.
  Our main contribution is an alternative proof that the model undergoes a nontrivial phase transition, a result which was earlier established by Hoffman.
  The main novelty of our work lies on the fact that the dynamic renormalization employed earlier is now replaced by a static version, which is easier to master and more robust to extend to different models.
  We emphasize the flexibility of our methods by showing the non-triviality of the phase transition for a new oriented percolation model in a random environment as well as for a model previously investigated by Kesten, Sidoravicius and Vares.
  In addition, we prove a result about the sensitivity of the phase transition with respect to the stretching mechanism and provide a list of open problems that could be explored using our techniques.

  \vspace{3mm}

  \emph{Keywords and phrases:} Percolation, renormalization, dependent environments.

  \emph{Math. Subject Classification:}  60K35, 82B43, 05C10
\end{abstract}

\section{Introduction}

The Bernoulli bond percolation model on the $d$-dimensional lattice has drawn great attention since it appeared in the mathematics literature \cite{PSP:2048852} as one of the simplest models that features a non-trivial phase transition.
Despite its simple construction, the model remains a source of fascinating open problems and longstanding conjectures, see for instance \cite{Gri99,bollobas2006percolation} for a comprehensive exposition to the subject.
Besides the interest on $\mathbb{Z}^d$, many works have focused on the study of the phase diagram for percolation on top of various random graphs, both as a way to model inhomogeneities in the environment \cite{van2010percolation,bramson1991,beaton2021alignment} but also to understand the geometry of the underlying random graph \cite{Hoffman2005,haggstrom2001coloring}.
For such models, it is often the case that even showing the existence of a non-trivial phase transition becomes a challenging task.

In this article we revisit a family of percolation models in random environment first introduced in \cite{jonasson2000percolation} and further studied in \cite{Hoffman2005}.
The original process is defined as follows.
Starting with the usual nearest neighbor square lattice, independently for every $i \in \mathbb{Z}$ and with probability $\rho \in [0,1)$, remove all vertical edges lying along the line $\{i\} \times \mathbb{R}$.
Repeat the procedure for the perpendicular direction, deleting independently with probability $\rho$ all horizontal edges that lie at a line $\mathbb{R} \times \{j\}$, for $j \in \mathbb{Z}$.
One is now left with a randomly stretched lattice $\mathcal{G}$, which is a dilute subgraph of the $\mathbb{Z}^2$-lattice with random separations between consecutive rows and columns (see Figure~\ref{f:lattice} for an illustration).
More details on this construction will be provided in Section \ref{s:notation}.

\begin{figure}[h]
  \begin{subfigure}{.55\textwidth}
    \centering
    \includegraphics{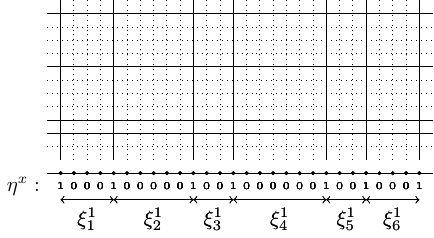}
    \caption{}
    \label{f:lattice}
  \end{subfigure}
  \begin{subfigure}{.44\textwidth}
    \centering
    \includegraphics{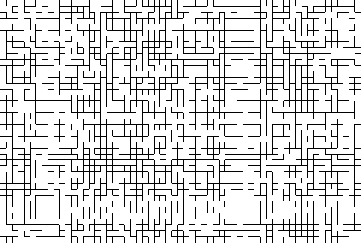}
    \caption{}
    \label{f:simulation}
  \end{subfigure}
  \caption{(a) A possible outcome for a portion of the random graph $\mathcal{G}$.
    The sequence $\eta^x$ of zeros and ones determine whether we delete or keep the vertical columns.
    (b) A small-scale simulation of the percolation process performed on top of our random graph.
    Infinite-range dependencies along the horizontal and vertical direction become evident.
  }
\end{figure}

Conditioned on the realization of $\mathcal{G}$, we now perform Bernoulli bond percolation on this graph with parameter $p \in [0,1]$. That is, every edge in $\mathcal{G}$ is kept (respectively, removed) with probability $p$ (respectively, $1-p$)  independently of the others, see Figure~\ref{f:simulation}.

Since $\mathcal{G}$ is a subgraph of $\mathbb{Z}^2$,  percolation does not occur if $p \leq 1/2$, see \cite{harris1960lower}.
It was conjectured in \cite{jonasson2000percolation} (see Conjecture~3.2) that for some $\rho > 0$ and some $p < 1$ there is almost surely an infinite connected percolation component in $\mathcal{G}$.
This result has later been proved by Hoffman in \cite{Hoffman2005}.
Our first contribution is to provide the following quantitative version of Hoffman's result:
\begin{theorem}
  \label{t:main}
 Let $\scalezero=10^6$.
 If $\rho \leq \scalezero^{-16}$ and $p \geq (1 - \scalezero^{-10})^{1/4}$, then
  \begin{equation}
    \label{e:percolation}
    \mathbb{P} \big[ (0, 0) \text{ is connected to infinity} \big] > 0.
  \end{equation}
\end{theorem}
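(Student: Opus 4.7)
The plan is to prove Theorem~\ref{t:main} via a multi-scale \emph{static} renormalization. Fix an expansion parameter $\alpha > 1$ (say $\alpha = 2$) and define scales $L_0 = L$, $L_{k+1} = L_k^{\alpha}$. For each $k$ and each $v \in L_k \mathbb{Z}^2$, the box $B_k(v)$ of side $3L_k$ centred at $v$ is declared \emph{good} when (i) the vertical strip and horizontal strip containing $B_k(v)$ meet very few deleted columns and rows respectively, and (ii) conditional on this environment, certain crossing events hold inside $B_k(v)$ --- explicitly, left--right and top--bottom crossings of $B_k(v)$ together with crossings of a few shifted sub-rectangles, engineered so that goodness of two adjacent scale-$k$ boxes can be FKG-glued into a crossing of their union. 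The renormalization target is $\mathbb{P}[B_k(v)\text{ is bad}] \leq L_k^{-10}$ for every $k \geq 0$, from which percolation at the origin follows by a routine gluing argument.

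For the base case, the hypothesis $\rho \leq L^{-16}$ ensures that $B_0(v)$ meets a deleted row or column with probability at most $6L\rho \leq 6L^{-15}$; on the complementary event, $\mathcal{G}$ restricts to a clean $3L \times 3L$ patch of $\mathbb{Z}^2$. Because $p$ satisfies $1 - p^4 \leq L^{-10}$, which forces $p$ exceedingly close to $1$, a standard Peierls/dual-contour bound ensures the crossings in (ii) fail with probability at most $\tfrac12 L^{-10}$, so altogether $\mathbb{P}[B_0\text{ bad}] \leq L^{-10}$. For the inductive step I would split the failure of $B_{k+1}$ into an \emph{environment defect} (too many deleted rows or columns through $B_{k+1}$) and a \emph{percolation defect} (a bad pattern of scale-$k$ sub-boxes, given a tame environment). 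The environment defect is $\mathrm{Binomial}(3L_{k+1}, \rho)$-dominated and hence has double-exponentially small tail in $L_{k+1}$. The percolation defect is controlled by conditioning on the environment, exploiting that scale-$k$ goodness events on spatially \emph{disjoint} supports become independent once the shared rows and columns are frozen, and running a Peierls-type union bound over connected patterns of bad sub-boxes; with $\alpha$ large enough, the polynomial combinatorial overhead is absorbed by the jump $L_k^{-10}\to L_k^{-10\alpha}$ and the induction closes.

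The principal obstacle is the long-range dependence induced by the deleted rows and columns, which forbids a naive product-measure renormalization. The static scheme navigates this by making the goodness of $B_k$ measurable with respect to the environment restricted to the slabs around $B_k$ together with the edges strictly inside $B_k$: conditional on the column/row indicators, the percolation components of goodness on spatially disjoint boxes genuinely decouple, while the environment indicators are themselves independent across distinct rows and columns. Designing condition (ii) so that (a) it enjoys this measurability, (b) it allows good adjacent boxes to be FKG-glued into crossings of their union, and (c) it passes through the induction with the target bound $L_k^{-10}$, is the delicate bookkeeping driving the exponents $16$ and $10$ in the statement; once these alignments are in place the conclusion $\mathbb{P}[(0,0) \leftrightarrow \infty] > 0$ follows in the classical renormalized-percolation style.
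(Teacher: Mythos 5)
There is a genuine gap in your proposal, and it is the central difficulty the paper is designed to overcome. Once $L_{k}$ exceeds $\rho^{-1}$ by a comfortable margin (which happens at every scale $k\geq 1$ even for the tiny $\rho\leq L^{-16}$ of the theorem, since $L_1 = L^2 > L^{16}$ fails but already $L_8 = L^8$ does not, and in any case eventually), a strip of width $3L_{k+1}$ \emph{typically} contains order $L_{k+1}\rho$ deleted columns. So if ``good'' requires the box to meet few enough deleted columns that $\mathcal{G}$ looks like a clean patch of $\mathbb{Z}^2$, the environment defect probability tends to $1$, not to $0$, and the induction collapses. Conversely, if ``good'' is relaxed to allow the inevitable deleted columns at higher scales, then condition (ii) --- the crossing events --- must hold in the presence of columns through which every vertical edge is deleted, and your proposal offers no quantitative mechanism for crossing such columns, only a Peierls bound calibrated for ordinary supercritical $\mathbb{Z}^2$. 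The binomial tail you invoke would only control the \emph{count} of deleted columns, not the percolation cost of traversing them; and a cluster of several adjacent deleted columns (which does occur, albeit rarely) is exponentially hard to traverse, so one needs to track not just how many defects there are but how severe they are and how they cluster. This is precisely the role of the graded ``difficulty'' labels $H_m$ in Section~\ref{s:environment}, whose recursive definition \eqref{e:H_m} rewards isolated defects (subtract one) and penalizes clustered ones (add one), and of the auxiliary variable $B_m$ which records the scale at which defects last merged --- none of which your boolean good/bad dichotomy captures.

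There is a second, related issue: the environment part of your goodness criterion is \emph{not} decoupled across scale-$k$ sub-boxes that lie in the same vertical strip or horizontal band, since they share the same column or row indicators. A single deleted column simultaneously degrades an entire vertical line of sub-boxes, so the Peierls-type union bound over ``connected patterns of bad sub-boxes'' does not see independent small probabilities; a single unlikely event in the environment produces an arbitrarily long bad pattern at cost $\rho$, not $\rho^{\text{pattern length}}$. The paper avoids this by splitting the renormalization in two: first a purely one-dimensional multi-scale analysis of each of the sequences $\xi^x,\xi^y$ separately (Lemma~\ref{l:environment}, with the finer Lemma~\ref{l:decay_khb}), which produces a quenched description of the defect landscape; then a second, genuinely two-dimensional percolation renormalization performed \emph{conditionally on $\xi^x,\xi^y$}, in which the quantities $u_k$ and $v_k$ track how many points of a ``$k$-fractal'' seed survive crossing a good column or a column of difficulty $h$, the latter costing a multiplicative factor $2^{-4(h-1)}$ (Remark~\ref{r:loss_v_k}, Lemma~\ref{l:algebra}). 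Note also that the paper uses geometric scales $L_k = L^k$ rather than your double-exponential $L_{k+1}=L_k^{\alpha}$; this is compatible with the additive/multiplicative bookkeeping in $H_m$ and in the cardinality of fractal seeds, whereas your double-exponential scales would change the combinatorics throughout. Your base-case computation is fine and your instinct to separate ``environment defect'' from ``percolation defect'' is the right one, but to close the induction you must replace the boolean defect count by a graded, recursively defined measure of difficulty and design the crossing event so that it degrades controllably, rather than catastrophically, as difficulty increases.
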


\begin{remark}
  \begin{enumerate}[\quad a)]
  \item The constant $L = 10^6$ appearing in the statement of Theorem \ref{t:main} should be regarded as a suitable scaling factor, and apart from the fact that it needs to be chosen sufficiently large, its exact value will not play an important role.
    Moreover, we have not put much effort in optimizing its value (and others appearing in the text), favoring instead the readability of the proofs.
  \item Theorem \ref{t:main} provides explicit ranges for the values of the parameters $\rho$ and $p$ for which percolation occurs.
    By means of a standard one-step block argument, one can obtain from Theorem~\ref{t:main} that for every $\rho > 0$ there exists $p<1$ sufficiently large so that \eqref{e:percolation} holds.
    Although we do not reproduce such argument here, the reader is directed to Section~3 of \cite{bramson1991} where a similar procedure is implemented.
    Also, one can show that for every $p > p_c(\mathbb{Z}^2) = 1/2$ there exists $\rho > 0$ sufficiently small so that \eqref{e:percolation} holds.
    The one-step renormalization argument that leads to this improvement is similar to the proof we present in Subsection~\ref{ss:oriented_gives_ksv}.
  \end{enumerate}
\end{remark}

% This is an appropriate moment to explain the novelty of our contribution.
% There are many models in statistical mechanics that involve two sources of randomness, such as the diluted Ising model or percolation on random graphs.
% These models are often treated with renormalization techniques, that are very robust with respect to the microscopic details of the system and focus on carrying information across scales.

At this point we would like to discuss the novelty of our contribution.
Renormalization is an important technique for the study of statistical mechanics problems, due to its robustness with respect to the microscopic details of the model, which allows it to be extended to various similar processes.

However, many models in statistical mechanics involve two sources of randomness, such as the diluted Ising model or our current analysis of percolation on a random graph.
In such situations, one often needs to employ two separate renormalizations procedures: one for the random environment and another for the ensuing process.

When using two separate renormalization arguments, it is often the case that the first one is made dynamic \cite{bramson1991, Hoffman2005, Kesten22, gacs2011clairvoyant, basu2014lipschitz}.
By this we mean that the sizes of the boxes, as well as their positions are made random and determined only after an algorithmic inspection of the environment.
This leads to a model-dependent proof, since each environment has different microscopic features, imposing difficulties in extending the reasoning to other models.
Moreover, these algorithmic constructions are more technically involved.

To give an example supporting the claim that static renormalizations are easier to extend, consider the article \cite{bramson1991}, where the authors use a dynamic renormalization to study the contact process with random defects (this model is analogous to the one studied here with the lattice stretched along a single direction).
In \cite{HSS19}, an alternative static proof was established, yielding a much stronger result with respect to the stretching mechanism.

Such improvement can also be noticed in the current article, where we also replace the dynamic renormalization introduced in \cite{Hoffman2005} with another argument employing a static renormalization instead.
Note that Theorem~\ref{t:main} cannot be made stronger in the sense introduced in \cite{HSS19}, as emphasized by Theorem~\ref{t:sharpness} below.
However, the static renormalization that we have developed is much more flexible and we make this clear by providing a considerably shorter proof of the phase transition for another model studied in \cite{Kesten22}.
This extension is given in Section~\ref{s:ksv}.

\bigskip

Our next result addresses the question of how stable the phase transition is with respect to changes in the stretching mechanism.
Before stating it, we need to change our perspective on how the model is constructed.

Observe that by removing the columns and rows independently with probability $\rho$, one effectively obtains independent $\Geo(\rho)$ gaps between present (non-removed) columns and rows, see Figure~\ref{f:lattice}.
In fact, one can alternatively define the model as follows: let $\xi^x=\{\xi^x_i\}_{i\in \mathbb{Z}}$ and $\xi^y=\{\xi^y_j\}_{j \in \mathbb{Z}}$ be i.i.d. sequences representing the horizontal and vertical gaps, respectively.
Then, each edge in the standard, nearest neighbor square lattice is open or closed independently with the following probabilities.
A horizontal edge $\{ (i, j), (i + 1, j) \}$ is declared open with probability $p^{\xi^x_i}$ (respectively a vertical edge $\{ (i, j), (i, j + 1) \}$ with probability $p^{\xi^y_j}$).
For a fixed realization of $\xi^x$ and $\xi^y$, and for a parameter $p \in (0,1)$, let $\mathbb{P}^{\xi^x,\xi^y}_p$ denote the law governing the resulting percolation process in $\mathbb{Z}^2$.
In this new formulation, the choice for geometric separation seems rather arbitrary, however the following theorem shows that it is indeed sharp.

\begin{theorem}
Let $\xi^x=\{\xi^x_i\}_{i \in \mathbb{Z}}$ and $\xi^y=\{\xi^y_j\}_{j\in\mathbb{Z}}$ be independent sequences of i.i.d.\ random variables such that:
  \label{t:sharpness}
  \begin{enumerate}[a)]
  \item \label{l:no_transition:1} $\xi^x_0$ does not have compact support, and
  \item \label{l:no_transition:2} the tails of $\xi^y_0$ are heavier than geometric, i.e.
    \begin{equation*}
      \text{for every $\mu \in (0, 1)$, } \quad \limsup_{t\to\infty} \mu^{-t} \mathbb{P}(\xi^y_0 > t) = \infty.
    \end{equation*}
    or equivalently, for all $t > 0$, $E(e^{t\xi^y_0}) = \infty$.
  \end{enumerate}
  Then for almost every environment $\xi^x$ and $\xi^y$,
\begin{equation}
  \text{$\mathbb{P}_p^{\xi^x,\xi^y} \big[ (0, 0) \text{ is connected to infinity} \big]=0$, every $p \in (0,1)$.}
\end{equation}
\end{theorem}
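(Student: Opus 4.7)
The plan is to fix a rational $p\in(0,1)$, show that almost surely in the environment $(\xi^x,\xi^y)$ the origin's cluster under $\mathbb{P}_p^{\xi^x,\xi^y}$ is finite, and then extend to all $p\in(0,1)$ by monotonicity in $p$ together with a countable union over rationals accumulating at $1$. The overall mechanism is a Peierls-type contour argument: I will construct, almost surely in the environment, a sequence of strictly nested rectangular contours $\partial R_k$ around the origin such that each is fully closed under the percolation with probability at least $1-2^{-k}$, so that the first Borel--Cantelli lemma applied to the environment-conditional failure events forces all but finitely many of the contours to be closed and hence traps the origin in a finite set.

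Concretely, for each scale $k\ge 1$ I would (deterministically) select a threshold $T_k^y$ in the super-geometric subsequence guaranteed by hypothesis~\ref{l:no_transition:2}, and set the vertical search-range $H_k \approx k/q^y(T_k^y)$, where $q^y(T):=\mathbb{P}(\xi^y_0\ge T)$. A Chernoff/Borel--Cantelli argument in the environment then produces, almost surely for all large $k$, rows $j_{\pm}^{(k)}$ at vertical distance at most $H_k$ from the origin with $\xi^y_{j_{\pm}^{(k)}}\ge T_k^y$. Setting $T_k^x:=\lceil\log_{1/p}(4H_k\cdot 2^{k+1})\rceil$ and $W_k\approx k/q^x(T_k^x)$, both finite by hypothesis~\ref{l:no_transition:1} (which guarantees $q^x(T):=\mathbb{P}(\xi^x_0\ge T)>0$), the same argument produces columns $i_{\pm}^{(k)}$ with $\xi^x_{i_{\pm}^{(k)}}\ge T_k^x$; then $R_k:=[i_-^{(k)},i_+^{(k)}]\times[j_-^{(k)},j_+^{(k)}]$ is the $k$-th contour.

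The probability that every edge crossing $\partial R_k$ is closed factors over the four sides. By the choice of $T_k^x$, the left and right sides contribute at least $1-2H_k p^{T_k^x}\ge 1-2^{-k-1}$, while the top and bottom sides require $W_k\, p^{T_k^y}\le 2^{-k-2}$, equivalently $q^x(T_k^x)\ge \mathrm{const}\cdot k\cdot 2^k\cdot p^{T_k^y}$. The hard part will be arranging this matching inequality, since hypothesis~\ref{l:no_transition:1} provides no quantitative lower bound on $q^x(T_k^x)$; in the worst case this quantity could be arbitrarily small. The plan is to exploit the freedom granted by hypothesis~\ref{l:no_transition:2} and take $T_k^y$ as large as needed within the super-geometric subsequence: once $H_k$, $T_k^x$ and the positive value $q^x(T_k^x)$ are fixed, the right-hand side $\mathrm{const}\cdot k\cdot 2^k\cdot p^{T_k^y}$ decays to $0$ exponentially as $T_k^y\to\infty$, so we may pick $T_k^y$ in the super-geometric subsequence beyond the required threshold. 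This diagonalization along scales of the super-geometric subsequence is the only place where the strength of hypothesis~\ref{l:no_transition:2} is really used.

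Given $\mathbb{P}(\partial R_k\text{ closed}\mid \text{env})\ge 1-2^{-k}$ for all large $k$, almost surely in the environment, the first Borel--Cantelli lemma applied to the failure events implies that almost surely over the percolation, all but finitely many boundaries $\partial R_k$ are fully closed; since the origin lies inside every $R_k$, its cluster is a.s.\ finite. Taking the intersection over rational $p\in(0,1)$ of the resulting full-measure environment events and using monotonicity in $p$ delivers the theorem. I expect the main technical content to be the coordinated choice of the four parameters $T_k^y,H_k,T_k^x,W_k$ described above; no further structural ingredient is needed.
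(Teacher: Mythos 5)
There is a genuine gap, and it is concentrated exactly where you flag the "hard part": the matching between the horizontal and vertical search ranges cannot be closed by the diagonalization you describe, and in fact the single-column blocking strategy fails structurally.

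The circularity is real and cannot be broken in the way you propose. You write that ``once $H_k$, $T_k^x$ and the positive value $q^x(T_k^x)$ are fixed, the right-hand side $\mathrm{const}\cdot k\cdot 2^k\cdot p^{T_k^y}$ decays to $0$ as $T_k^y\to\infty$.'' But $H_k\approx k/q^y(T_k^y)$ depends on $T_k^y$, so $T_k^x\approx\log_{1/p}(4H_k 2^{k+1})$ and therefore $q^x(T_k^x)$ and $W_k$ all move when $T_k^y$ moves; they cannot be held fixed. Tracking the feedback: increasing $T_k^y$ increases $H_k$, which increases $T_k^x$, which decreases $q^x(T_k^x)$ (potentially \emph{much} faster than geometrically, since hypothesis (a) only gives non-compact support), which increases $W_k$, which tightens the very constraint $W_k p^{T_k^y}\le 2^{-k-2}$ you were trying to satisfy. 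Writing $s:=T_k^x\approx\log_{1/p}\bigl(\mathrm{const}\,k/q^y(T_k^y)\bigr)$, the constraint becomes $p^{T_k^y}\lesssim q^x(s)$. Even along a diagonal subsequence where $q^y(t)>\mu_j^t$ with $\mu_j\to 1$, one only gets $s=o(t)$ with no quantitative rate, while $q^x$ may decay, say, like $e^{-e^{T}}$, in which case $q^x(s)=e^{-e^{s}}$ is far smaller than $p^t$ unless $s\lesssim\log t$ — a rate the super-geometric hypothesis does not supply. Taking $q^x$ to decay still faster (triply exponential, etc.) kills any such scheme: the single-column approach forces $T_k^x$ to grow like $\log H_k$, and the total absence of tail control on $\xi^x$ means $W_k\approx 1/q^x(T_k^x)$ can explode faster than any function of $H_k$, making the loop divergent.

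The paper sidesteps this by an asymmetric construction that never lets $T^x$ grow with the scale. Fix once and for all a threshold $k=k(p)$ with $p+p^k<1$, so that the constant $\alpha:=\mathbb{P}(\xi^x_0\ge k)>0$ is \emph{fixed}. On the event that a horizontal run of $\beta n$ consecutive columns all have $\xi^x\ge k$, crossing that strip costs $e^{-\gamma\beta n}$ per starting point by subcritical exponential decay on the anisotropic lattice $(k\mathbb{Z})\times\mathbb{Z}$ — exponential suppression in the \emph{band width}, not merely a factor $p^k$ per edge. Because $\alpha$ is a fixed constant, such a run of length $\beta n$ exists within horizontal range $e^n$ with probability tending to $1$, and then the rectangle $[-e^n,e^n]\times[-e^{\delta n},e^{\delta n}]$ with $\delta<\gamma\beta$ has vanishing horizontal-crossing probability ($\approx e^{\delta n}e^{-\gamma\beta n}$). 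Only the vertical direction uses a single tall row, of height $an$ with $a>1/\log(1/p)$, so that $e^n p^{an}\to 0$; and it is only here that the super-geometric tail of $\xi^y$ is invoked, to guarantee that such a row appears in a window of size $e^{\delta n}$ along a subsequence. In short: the paper blocks horizontal escape with a band of moderately thick columns (exponential decay; no tail control on $\xi^x$ needed beyond positivity of $\alpha$), and blocks vertical escape with a single very tall row; your proposal tries to use a single very wide column on each side, which is precisely what the lack of tail control on $\xi^x$ forbids.
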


\begin{remark}
  \label{r:compact_support}
  If the marginals of $\xi^x$ have compact support, then the phase transition persists under much weaker hypotheses on the tails of the marginals of $\xi^y$.
  This was the central result in \cite{HSS19}, where it is proved that, if $E((\xi^y_0)^{1 + \epsilon}) < \infty$ for some $\epsilon >0$, then $\mathbb{P}_p^{\xi^x, \xi^y}(o\leftrightarrow\infty) > 0$ for every $p < 1$ sufficiently large.
  In \cite{HSS19} it is also shown that $E(\xi_0^y) < \infty$ is a necessary condition for percolation.
\end{remark}

\begin{remark}
In \cite{HSS19} it has been proved that the radius of an open cluster below the critical point follows a power-law decay.
In \cite{jahnel2022continuum} the authors also show a power-law decay for a continuum percolation model similar to the one considered here.
\end{remark}

\begin{remark}
The existence of a non-trivial phase transition for percolation on the stretched lattice, immediately suggests that one can investigate the same type of phenomenon for other classes of models originating from statistical mechanics.
 As a first step towards that direction, let us mention the work by Häggström \cite{haggstrom_2000} which allows us to directly translate results on the phase transition for a class of models.
In particular, given that the graphs studied here have bounded degree, one can immediately deduce that stretched lattices feature a non-trivial phase transition for: Bernoulli site percolation, the Ising model, the Widom-Rowlinson model and the Beach model.
\end{remark}

\paragraph{Related works.}
Since the pioneering work of McCoy, Wu \cite{mccoy68} on the Ising Model, many works have turned to the question of how the phase transition of a statistical mechanics model is affected by the presence of an environment having columnar disorder.
In the last decades several works have considered that question in the context of percolation,
see for instance \cite{duminil2018brochette, Kesten22, bramson1991, jahnel2022continuum, delima2022dependent}.

A series of works have dealt with a planar percolation process where the state of a given vertex $(i, j)$ is specified by random variables $\xi^x_i$ and $\xi^y_j$ living on the coordinate axes.
Such percolation models are often referred to as coordinate percolation, see for example
\cite{10.1007/978-981-15-0302-3_4, basu2014lipschitz, gacs2011clairvoyant, moseman2008form, winkler2000dependent, pete2008, marchand2022corner} for some important progresses in these models.
It is also worth mentioning some models that have been studied in higher dimensions, such as Bernoulli line percolation \cite{hilario2019linepercolation} and others carrying infinite range dependence, see \cite{Szn09,TW10b}.

\paragraph{Overview of the proofs.}

The proofs of Theorems~\ref{t:main} and \ref{t:oriented} follow a similar structure which puts in evidence the suitability of static renormalization to approach situations where dependencies are present.
Both proofs are divided in two steps that we call the `Control of the Environment' and the `Control of Percolation', and that are respectively detailed in Sections~\ref{s:environment} and \ref{s:percolation}.

In the analysis of the environment, we employ a multi-scale renormalization on $\mathbb{Z}$, where the sequence of length scales grows exponentially as $\scalezero^k$,  where $L$ should be regarded as a sufficiently large number (we fix $L = 10^6$ for the sake of concreteness).

At each scale, we pave $\mathbb{Z}$ using disjoint intervals with the corresponding length.
For each of these intervals $I$ we introduce a notion of ``difficulty'' that, roughly speaking, quantifies how hard one would find to cross a column having $I$ as a basis in the resulting percolation process.
This quantifier is a suitable function of the environment $\xi$ in $I$ and assumes values in $\{0, 1, 2, \dots\}$.
The main goal of Section \ref{s:environment} is to prove that typical intervals have difficulty equal to zero provided $\rho$ is sufficiently small.
One can then assign difficulties to intervals both in the $x$-axis as well as the $y$-axis, using the sequences $\xi^x$ and $\xi^y$.

We start Section~\ref{s:percolation} already knowing that a typical box at scale $k$ projects to good intervals (with zero difficulty) both in the $x$-axis and in the $y$-axis.
Our main objective is then to prove that such a box contains a large percolation cluster with high probability provided $p$ is close enough to one.
This is done by induction and involves a recursive estimate on the probability to cross columns with varying ``difficulties''.

The central idea of the proof lies in the assignment of ``difficulty levels'' to different intervals.
Certain choices of this ``assignment rule'' would make the renormalization step for the environment easy, while the percolation step unfeasible.
Other choices could act the opposite way, making it so that typical intervals are bad.
We strike a balance in our choice of ``difficulty rule'' that provides a tradeoff between the two renormalization steps, in the following sense:
\begin{itemize}
\item Difficulty labels have a tendency to diminish from one scale to another.
  This makes the first renormalization argument work, proving that a typical large interval is good (difficulty~$0$).
\item At the same time, the difficulty assigned to a large interval takes good account of the difficulty labels in sub-intervals.
  This allows the second renormalization to work, by proving that: once a box projects to good intervals in both axes, it typically contains a large connected component.
\end{itemize}
We later explain how this balance is attained.

\paragraph{Acknowledgements}

MH has been supported by CNPq grants ``Projeto Universal'' (406001/2021-9) and ``Produtividade em Pesquisa'' (312227/2020-5), and by FAPEMIG grant `Projeto Universal' (APQ-01214-21).
MS has received funds from CNPq grant ``Programa de Capacita\c c\~ao Institucional'' and by Funarbe (5812).
RS has been partially supported by CNPq (312837/2020-8), CAPES, and FAPEMIG (APQ-00868-21, RED-00133-21).
During this period, AT has been supported by grants ``Projeto Universal'' (406250/2016-2) and ``Produtividade em Pesquisa'' (304437/2018-2) from CNPq and ``Jovem Cientista do Nosso Estado'', (202.716/2018) from FAPERJ.

\paragraph{Organization of the paper}
In Section~\ref{s:notation} we introduce some more notation that is required for our proofs.
Section~\ref{s:environment} contains the necessary estimates on the environment, proving that a typical box projects to good intervals both horizontally and vertically.
In Sections~\ref{s:percolation}, \ref{s:contraction} and \ref{s:summability} we complete the proof of Theorem~\ref{t:main} by analyzing the percolation process when the environment is controlled.
Theorem~\ref{t:sharpness} is proved in Section~\ref{s:sharpness}, while the oriented models are treated in Section~\ref{s:ksv}.

\section{Definition of the model and further notation.}
\label{s:notation}

We now present the construction of the model in a precise mathematical way and introduce notation that will be used throughout the paper.
As usual, the two-dimensional Euclidean lattice is $(\mathbb{Z}^2, \mathcal{E})$, where $\mathcal{E} = \big\{ \{z, w\}; |z - w| = 1 \big\}$.
It will be convenient to regard that lattice naturally embedded in $\mathbb{R}^2$.
Denote $e_1, e_2$ the canonical basis for $\mathbb{R}^2$ and $\pi_x, \pi_y$ the projections onto the horizontal and vertical coordinates, respectively.
In what follows it will be useful to formulate the model in two equivalent ways.

First we consider a pair of independent two-sided sequences $\eta^x=(\eta^x_i)_{i \in \mathbb{Z}}$, $\eta^y=(\eta^y_i)_{i \in \mathbb{Z}}$ of  i.i.d.\ $\Ber(\rho)$ random variables, where $\rho \in [0, 1]$.
Given $\eta^x$ and $\eta^y$ we define a new graph $\mathcal{G} = \big( \mathbb{Z}^2, \mathcal{E}(\eta^x, \eta^y) \big)$ called a stretched lattice, whose edge set is $\mathcal{E}(\eta^x, \eta^y) = \mathcal{E}^h(\eta^y) \cup \mathcal{E}^v(\eta^x)$, where
\begin{equation}
  \label{e:graph}
  \begin{split}
    \mathcal{E}^h(\eta^y) & = \big\{ \{z, z + e_1\};\, \eta^y(\pi_y(z)) = 1 \big\},\\
    \mathcal{E}^v(\eta^x)  & =\big\{ \{z, z + e_2\};\, \eta^x(\pi_x(z)) = 1 \big\}
  \end{split}
\end{equation}
Then we define on the appropriate measurable space the probability measure $P^{\eta^x, \eta^y}_p$, under which the configurations $\omega \in \{0, 1\}^{\mathcal{E}(\eta^x, \eta^y)}$ are distributed as an i.i.d.\ bond percolation on $\big( \mathbb{Z}^2, \mathcal{E}(\eta^x, \eta^y) \big)$ with parameter $p$.

It is easy to map the percolation process on the stretched lattice $\mathcal{G}$ back to the usual $\mathbb{Z}^2$ lattice.
For that end, let  $\xi^x = (\xi^x_i)_{i \in \mathbb{Z}}$ (respectively $\xi^y = (\xi^y_j)_{j \in \mathbb{Z}}$) stand for the amount of $0$'s between two consecutive occurrences of $1$'s in $\eta^x$ (respectively $\eta^y$).
With this notation, $\xi^x$ and $\xi^y$ are independent two-sided sequences of i.i.d.\ random variables with distribution $\Geo(1 - \rho)$:
\begin{equation}
  \label{e:distr_xi}
  \mathbb{P}(\xi^x_i \geq n) = \rho^n, \qquad \mathbb{P}(\xi^y_j \geq n) = \rho^n, \qquad n \in \mathbb{Z}_+
\end{equation}
Note that these random variables are allowed to assume value $0$.

Conditional on $\xi^x$, $\xi^y$, and for $p \in [0, 1]$ we define a bond percolation on the usual $\mathbb{Z}^2$ by declaring  each horizontal edge $\{(i, j), (i + 1, j)\}$ open with probability $p^{\xi^x_i + 1}$ and closed otherwise while each vertical edge $\{(i, j), (i, j + 1)\}$ is declared open with $p^{\xi^y_j + 1}$.
The states of the edges are mutually independent.
We denote $P^{\xi^x, \xi^y}_p$ the law of the process on the appropriate probability space.

One can easily verify that the formulations in terms of $\eta$'s and $\xi$'s are equivalent.

\begin{remark}
  Standard ergodicity arguments may be employed to define $p_c(\rho)$ as being the critical value for $p$ above which an infinite cluster exists almost surely (under the annealed law, i.e.\ the law of the percolation process averaged over the realization of the environment).
  Theorem \ref{t:main} provides ranges of parameters $(\rho,p)$ for which $p > p_c(\rho)$, so that an infinite cluster exists almost surely (under the annealed law).
  Following the standard argument of Burton and Keane \cite{BK89} for the annealed law, one may prove uniqueness of that infinite cluster for all the range $p > p_c(\rho)$.
  In fact the resulting percolation is translation invariant and ergodic with respect to lattice shifts in the direction spanned by the vector $e_1+e_2=(1,1)$.
  Moreover, in the formulation of the model given in \eqref{e:distr_xi}, it becomes clear that the percolation enjoys the so-called finite-energy property.
\end{remark}

\section{Environment}
\label{s:environment}

This section is devoted to the control of the environment through a renormalization scheme  designed to analyze the behavior of a sequence $\xi$ of geometric random variables (see \eqref{e:distr_xi}) over large length scales.

We start by defining the sequence of scales.
First, fix $\scalezero = 10^6$ and let
\begin{equation}
  \label{e:def_Lk}
  L_k = \scalezero^k,  \textrm{ for $k\in \mathbb{Z}_+$.}
\end{equation}
For each $i\in \mathbb{Z}$, we define the $i$-th $k$-interval (or $i$-th interval of scale $k$) as
\begin{equation}
  \label{e:def_Ik}
  I_{(k, i)} = \big[ i L_k, (i + 1) L_k \big) \cap \mathbb{Z}.
\end{equation}

One can immediately verify that, for every $k \in \mathbb{Z}_+$:
  \begin{enumerate}[i)]
  \item $0\in I_{(k,0)}$;
  \item the intervals $\{I_{(k, i)}\}_{i \in \mathbb{Z}}$ form a disjoint paving of $\mathbb{Z}$;
  \item \label{e:ttI_union} an $(k+1)$-interval %at scale $k + 1$
   is composed of exactly $\scalezero$ sub-intervals of scale $k$.
  \end{enumerate}
It is worth to mention that the resulting paving of $\mathbb{Z}$ is `static' meaning that it does not depend on the realization of the environment $\xi$.
Moreover, the choice of $\scalezero=10^6$ is made for convenience, and any sufficiently large value would work equally well
(here, due to several choices made throughout the text, it will suffice that $L\geq 2^{15}+1$, see \eqref{e:L_is_very_large}).

Let us introduce for each $k \geq 0$ the set $M_k := \{k\} \times \mathbb{Z}$ whose elements $m \in M_k$ will serve to index intervals $I_m$ and also random variables $H_m$ (to be defined soon) that will quantify how bad the environment inside $I_m$ is.
Once these random variables are defined we will be able to label each of the intervals $I_m$ either good or bad according to the following criterion:
\begin{display}
\label{e:def_good_int}
For $m \in M_k$, $I_m$ is said \emph{good} if $H_m = 0$
and \emph{bad} otherwise.
\end{display}
We will often abuse notation and write `\emph{$m$ is good (bad)}' instead of `\emph{$I_m$ is good (bad)}' and, indeed we may refer to $m$ and $I_m$ indistinguishably.

The definition of $H_m$ will be recursive, and made in such a way that we expect most of the intervals to be good.
But, for those that are bad, the values $H_m > 0$ should be understood as a measure of the intensity of the defect inside $I_m$, large values of $H_m$ corresponding to large intensity of defects.

\medskip

\paragraph{Defining $H_m$ for $m \in M_0$.}
For each $m = (0, i) \in M_0$, define
\begin{equation}
  \label{e:param_0}
  H_{(0, i)} = \xi_i.
\end{equation}
In view of \eqref{e:def_good_int}, an interval $I_{(0, i)}$ is good if and only if $\xi_i = 0$.

\begin{remark}
  \label{r:exponential_decay_h}
  Although the choice in \eqref{e:param_0} is rather arbitrary, it is reasonable to require the intensity of a defect to be given by the corresponding variable $\xi_i$ since the latter is a simple measurement of how hard it will be for the resulting percolation to traverse the column $\{i\} \times \mathbb{Z}$.
  \end{remark}

  \begin{remark}
  It is straightforward to verify from the choice of $H_m$ in \eqref{e:param_0} and from \eqref{e:distr_xi} that, for every $m = (0, i) \in M_0$,
  \begin{equation}\label{e:dist_geo_scalezero}
    \mathbb{P}[H_m = h] = \mathbb{P}[\xi_i = h] = \rho^{h} (1 - \rho) \leq \rho^h.
  \end{equation}
  This may be taken as an indication on how to define the intensity for defects at higher scales.
  Given the i.i.d.\ nature of our random environment, if we want to preserve the exponential decay at higher scales, the probability to find defects with intensity $h_1, \dots, h_r$ in separate intervals should decay exponentially with $h_1 + \dots + h_r$.
  Therefore, it is natural that the defect intensity at scale $k + 1$ be related to the sum of the defects of its sub-intervals from scale $k$.
\end{remark}

Inspired by this argument, let us now move to higher scales using recursion.
For that we write for $m =  (k+1, i) \in M_{k+1}$
\begin{equation}
  \label{e:Q_m}
  \mathcal{Q}_m =\{m' \in M_{k} \colon I_{m'} \subset I_m\} =\{(k, i \scalezero  + j) \colon 0 \leq j \leq L-1\}.
\end{equation}

\paragraph{Defining $H_{m}$, for $m \in M_{k + 1}$.}
Assuming that we already know the values of $H_{m'}$, for every $m' \in M_k$ we recall the definition of $\mathcal{Q}_m$ in \eqref{e:Q_m} and define
\begin{equation}
  \label{e:H_m}
  H_m =
  \begin{cases}
    0, \qquad & \text{if all intervals $m' \in \mathcal{Q}_m$ are good},\\
    H_{m_1} -1, & \text{if $m_1$ is the only bad sub-interval in $\mathcal{Q}_m$},\\
    1 + \displaystyle\sum_{i = 1}^{r} H_{m_i}, & \text{if $\{m_1, \dots, m_r\}$, with $r\geq2$, are the bad sub-intervals in $\mathcal{Q}_m$}.
  \end{cases}
\end{equation}

  \begin{remark}
    There is yet another important reason why the choice for the defect at scale $k+1$ should be related to the sum of the defects at scale $k$.
    As mentioned above, it is convenient to think of the value of $H_m$ as a measurement of the intensity (or depth) of the defect exhibited by the environment inside the interval $I_m$.
    In fact, when studying the percolation process on $\mathcal{G}$, we will deal with the probability of crossing vertical strips that projects to the defective intervals $I_{m_i}$.
    It will turn out that, starting from a large set of points $S$ to the left-hand side of such a strip, we will be able to reach a set $R$ of points at the right-hand side, whose cardinality behaves roughly as $|R| = |S| e^{- \alpha(p) H_{m_i}}$.
    Therefore, as we cross successive strips with defects the multiplicative impact on the cardinality loss amounts to an additive term in the exponent.
    This hints that we should add up the intensities $H_{m_i}$ of the defects at successive strips to get the intensity of a larger strip containing them.
    This reasoning is formalized at Lemma \ref{l:algebra}.
    \label{r:exponential_percolation}
  \end{remark}

\begin{remark}
Despite the reasoning in the preceding remark, in the above definition we do not simply add the defects of the previous scale.
  In fact, we reward the occurrence of a single defect (by subtracting one unit to $H_{m_1}$) while penalizing the occurrence of several (by summing one unit).
  The following items may clarify the reason why that is done.
  \begin{enumerate}[i)]
  \item In case a single bad sub-interval exists, we subtract one unit to account for the recovery provided by the good surroundings of that sub-interval.
In fact, from the perspective of the higher scale, the existence of a single sub-interval provides plenty of room for the percolation process to build clusters next to each side of the resulting defective strip and hence several opportunities to connect these clusters across that strip (this will be quantified in the Lemma \ref{l:recovery}).
  \item The defect of an interval that contains two or more bad sub-intervals from the previous scale is strictly larger than the sum of the forming defects due to the addition of one unit in the third term in \eqref{e:H_m}.
  This helps to simplify future calculations, see for instance \eqref{e:importance_2}.
  \item One of our main goals is to prove that the probability of observing deep defects decays fast.
    This would be easier to establish if larger defects would always arise as combinations of two (or more) defects from bad subintervals.
    In fact, the occurrence of two already independent unlikely events in the previous scale should be very improbable.
    However, we also need to consider the event that a defect is generated from a single bad subinterval, whose probability is harder to bound because we are left with a single unlikely event whose cost gets amortized by the entropy that comes from the amount of possible positions of that bad subinterval.
    This hints why we can afford to add one unit to the last term, and why it is convenient to subtract one unit in the middle term in \eqref{e:H_m}.
  \end{enumerate}
\end{remark}

\bigskip

Another feature that we will make use of throughout the text is the following:
\begin{display}
  \label{e:good_single_bad}
  a good interval $m$ may contain at most one bad sub-interval, in which case that bad sub-interval $m'\in Q_m$ necessarily has $H_{m'} = 1$.
\end{display}
It is straightforward to deduce it from \eqref{e:H_m}.

\bigskip

\paragraph{Controlling the probabilities of bad intervals.} Our main goal is to show that the environment is well-behaved, meaning that we will most likely observe only good intervals over large scales.
In order to quantify this notion, we define the following sequence
\begin{equation}
  \label{e:p_k}
  p_k
  = \sup_{m \in M_k} \mathbb{P} [\text{$m$ is bad}]
  = \mathbb{P} [\text{$(k, 0)$ is bad}].
\end{equation}
Here, the second equality holds thanks to translation invariance.
Our goal boils down to show that $p_k$ decays fast as we increase $k$ which we are able to do for sufficiently small values of $\rho$.

\begin{lemma}[Environment]
  \label{l:environment}
  For every $\rho \leq \scalezero^{-16}$, we have
  \begin{equation}
    \label{e:environment}
    p_k \leq \scalezero^{-k/2}=L_k^{-1/2}, \text{ for every $k \geq 0$.}
  \end{equation}
\end{lemma}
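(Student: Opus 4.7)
My plan is to strengthen the claim to a tail bound on $H_{(k,0)}$ before inducting. The naive induction on $p_k$ alone does not close, since bounding $\mathbb{P}[H_m \ge 1]$ at scale $k+1$ via \eqref{e:H_m} requires information on $\mathbb{P}[H_{(k,0)} \ge 2]$ (arising from the single-bad-sub-interval case), and more generally on the full tail of $H_{(k,0)}$ (arising from the sum $\sum_i H_{m_i}$ in the multi-bad case).

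I would therefore track the exponential moment
\begin{equation*}
  \Psi_k \;:=\; \mathbb{E}\bigl[\, c^{H_{(k,0)}}\,;\, H_{(k,0)} \ge 1 \,\bigr],
\end{equation*}
with a suitable constant $c > 1$ (the right choice will turn out to be of order $L^{3/2}$). Since $p_k \le c^{-1}\Psi_k$, any bound of the form $\Psi_k \le C\,L^{-k/2}$ implies \eqref{e:environment}. The base case $k=0$ uses $H_{(0,0)} \sim \Geo(1-\rho)$, giving $\Psi_0 \le c\rho/(1 - c\rho)$, which is minuscule thanks to $\rho \le L^{-16}$.

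For the inductive step I partition $\{H_m \ge 1\}$ according to the number $N$ of bad sub-intervals inside $\mathcal{Q}_m$. By \eqref{e:H_m}, this event corresponds to either (a) $N = 1$ with the unique bad sub-interval having $H \ge 2$, or (b) $N \ge 2$. Using the independence of the sub-intervals, case (a) contributes at most $L\,c^{-1}\,\Psi_k$ (the factor $L$ counts positions of the bad sub-interval, the factor $c^{-1}$ encodes the recovery $-1$ in \eqref{e:H_m}), while case (b) contributes at most $c\bigl[(1+\Psi_k)^L - 1 - L\Psi_k\bigr] = O(cL^2\Psi_k^2)$ (the leading $c$ reflects the penalty $+1$, and the rest comes from summing the binomial series over $r \ge 2$). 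Altogether
\begin{equation*}
  \Psi_{k+1} \;\le\; L\,c^{-1}\,\Psi_k \;+\; c\bigl[(1+\Psi_k)^L - 1 - L\Psi_k\bigr].
\end{equation*}
Taking $c = L^{3/2}$ turns the first term into $L^{-1/2}\Psi_k$; and so long as $\Psi_k \le L^{-4}$, the second term is bounded by $cL^2\Psi_k^2 \le L^{-1/2}\Psi_k$. Hence $\Psi_{k+1} \le 2L^{-1/2}\Psi_k$, which iterates to the required geometric decay, the invariant $\Psi_k \le L^{-4}$ being trivially preserved from the base estimate $\Psi_0 \lesssim L^{3/2}\rho \le L^{-14.5}$.

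The hard part is precisely the balancing act encoded in the choice of $c$: the entropy factor $L$ in case~(a), coming from the position of the single bad sub-interval, must be defeated by the recovery factor $c^{-1}$, forcing $c$ to be at least of order $L$; simultaneously the penalty factor $c$ multiplying the quadratic contribution from case~(b) must remain small relative to $\Psi_k^{-1}$. This two-sided tension is exactly what the ``$-1$'' recovery and ``$+1$'' penalty in the definition of $H_m$ are designed to accommodate, and it is what dictates the quantitative relation between $\rho$ and $L$ in the statement. Everything else is routine book-keeping of explicit constants with $L = 10^6$.
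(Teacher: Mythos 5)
Your approach is genuinely different from the paper's and, with a small fix, just as valid. The paper introduces the auxiliary ``birth scale'' variables $B_m$ and proves the stronger Lemma~\ref{l:decay_khb}, which bounds $p_{k,h,b} = \mathbb{P}[H_m = h, B_m = b]$ by $L^{-(k+h/(b+1))-2h-13}$; Lemma~\ref{l:environment} then follows by summing over $h$ and $b$. The term $h/(b+1)$ is a bookkeeping device that tracks a depleting ``budget'' used to absorb the positional entropy factor $L$ accumulated over many consecutive single-bad-sub-interval steps. You achieve the same absorption more implicitly by tracking the truncated exponential moment $\Psi_k = \mathbb{E}[c^{H_{(k,0)}};H_{(k,0)}\geq 1]$: the recovery ``$-1$'' buys a factor $c^{-1}$ in case~(a) which beats the entropy $L$, while the penalty ``$+1$'' costs only a single factor $c$ in case~(b), which is harmless since that term is quadratic in $\Psi_k$. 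This is arguably cleaner and needs no extra random variable. (Note that the paper also uses $B_m$ qualitatively in Lemma~\ref{l:battle}, but not the quantitative bound from Lemma~\ref{l:decay_khb}, so your approach would still suffice for the overall argument.)

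However, your specific calibration $c = L^{3/2}$ is slightly too tight and the iteration does not close. You obtain $\Psi_{k+1} \le 2L^{-1/2}\Psi_k$ and hence $\Psi_k \le (2L^{-1/2})^k\Psi_0 = 2^k L^{-k/2}\Psi_0$, which is \emph{not} of the form $CL^{-k/2}$ with a $k$-independent $C$. Feeding this back through $p_k \le c^{-1}\Psi_k$ gives $p_k \le 2^k L^{-k/2-3/2}\Psi_0$, and the requirement $p_k \le L^{-k/2}$ becomes $2^k \le L^{3/2}/\Psi_0 \approx L^{16}$, which fails once $k \gtrsim 16\log_2 L \approx 319$. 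The contraction rate must be \emph{strictly} below $L^{-1/2}$. The repair is trivial: take, say, $c = 4L^{3/2}$ (or $c = L^2$), so that the linear term contributes $Lc^{-1}\Psi_k \le \tfrac{1}{4}L^{-1/2}\Psi_k$ and, under the (easily preserved) invariant $\Psi_k \le L^{-4}/16$, the quadratic term contributes at most another $\tfrac{1}{4}L^{-1/2}\Psi_k$; then $\Psi_{k+1} \le \tfrac{1}{2}L^{-1/2}\Psi_k$, the prefactor $2^{-k}$ is now in your favor, and $p_k \le c^{-1}\Psi_k \le L^{-k/2}$ follows for every $k \geq 0$ directly from the base bound $\Psi_0 \le 2c\rho \lesssim L^{-14}$.
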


Instead of proving Lemma \ref{l:environment} directly, we will show that it is a consequence of a stronger version (Lemma~\ref{l:decay_khb}) that takes into account more refined features of the defects besides their intensities.
In order to state that version we will need to introduce yet another family of variables $B_m$.
Intuitively speaking, they will register the latest scale at which two or more separate defects were joined together in order to form the defect observed inside $I_m$.

The definition is recursive.
At scale zero, we simply define $B_m = 0$ for each $m = (0, i)$, $i  \in \mathbb{Z}$.
Now, suppose that for every $m' \in M_k$, we have already defined the variables $B_{m'}$.
Then, for any $m \in M_{k+1}$, we define
\begin{equation}
  \label{e:B_m}
  B_m =
  \begin{cases}
    0, \qquad & \text{if $m$ is a good interval,}\\
    B_{m'}, & \text{if $m$ is bad and $m'$ is the only bad sub-interval  in $\mathcal{Q}_m$},\\
    k + 1, & \text{if there is more then one bad sub-intervals in $\mathcal{Q}_m$}.
  \end{cases}
\end{equation}
The reader may find it useful to consult Figure \ref{f:bm} for an illustration.

\begin{figure}[htb]
  \centering
  \includegraphics{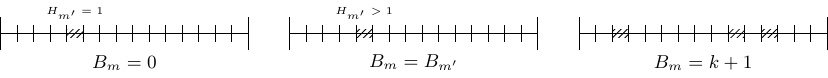}
  \caption{Possible values of $B_m$ for an $m \in M_{k+1}$. On the left, there is a single bad sub-interval with $H=1$ so $m$ is good ($H_m=0$) and $B_m = 0$.
    On the middle there is again a single bad sub-interval, but this time with $H >1$ ($H_m>0$) so $B_m$ takes the $B$-value of this sub-interval.
    On the right, there are at least two bad sub-intervals, implying that $B_m$ equals the scale index $k+1$.}
  \label{f:bm}
\end{figure}

\begin{remark}
  \label{r:B_m}
  Below we list some of the properties of the variables $B_m$:
  \begin{enumerate}[i)]
  \item If $I_m$ is bad, then $B_m$ represents the largest scale at which two or more defects have been combined to form the defect observed in $I_m$.
  \item The choice $B_m=0$ for good intervals is arbitrary, and it will not be used later.
  \item It may be the case that $m$ is still declared good despite containing a single bad sub-interval $m'$.
    In that case, \eqref{e:B_m} implies $B_m=0$.
    It is only when $m'$ is bad enough for the larger interval $m$ to be declared bad  we assign $B_m = B_{m'}$.
  \end{enumerate}
\end{remark}

The salient properties of the $B_m$ $(m \in M_k$, $k \geq 1$) that we use throughout the text are,
\begin{display}
  \label{e:B_m_large}
  if $B_m = k$, then the interval $I_m$ is bad \\ and it contains
  at least two bad sub-intervals,
\end{display}
and
\begin{display}
  \label{e:B_m_small}
  if $ B_m < k$, then the interval $I_m$ \\ contains at most one bad sub-interval.
\end{display}
It is important to observe that some proofs throughout the paper will distinguish between the cases $B_m = k$ and $B_m < k$ as above.

In Lemma \ref{l:decay_khb} instead of $p_k$ we will bound the quantities
\begin{equation}
  \label{e:def_pkhb}
  p_{k,h,b} = \sup_{m \in M_k} \mathbb{P} [ H_m = h, B_m = b ] = \mathbb{P} [ H_{(k, 0)} = h, B_{(k, 0)} = b ]
\end{equation}
obtaining a much finer control of the environment since we are taking into account not only how bad the interval is ($H_m = h$), but also the structure of the defect therein ($B_m = b$):

\begin{lemma}
  \label{l:decay_khb}
  For any $\rho < \scalezero^{-16}$, we have
  \begin{equation}
    \label{e:environment2}
    p_{k, h, b} \leq \scalezero^{- \big( k + \frac{h}{b+1} \big) - 2 h - 13},
  \end{equation}
  for every $k, b \geq 0$ and $h \geq 1$.
\end{lemma}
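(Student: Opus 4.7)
The plan is to prove Lemma~\ref{l:decay_khb} by induction on the scale $k$, using the recursive definitions~\eqref{e:H_m}--\eqref{e:B_m}. For the base case $k=0$, the identity $B_{(0,i)}\equiv 0$ forces $p_{0,h,b}=0$ whenever $b\geq 1$, while $p_{0,h,0}=(1-\rho)\rho^h\leq L^{-16h}$. The latter is below the target $L^{-3h-13}$ because $16h\geq 3h+13$ for every $h\geq 1$.

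For the inductive step, fix $m\in M_{k+1}$ and split on $B_m$. If $B_m=b\leq k$, then by~\eqref{e:B_m_small} the bad interval $I_m$ contains a single bad sub-interval $m_j\in\mathcal{Q}_m$, and the recursion forces $H_{m_j}=h+1$, $B_{m_j}=b$. A union bound over the $L$ candidate positions together with the inductive hypothesis yields
$$p_{k+1,h,b}\leq L\cdot p_{k,h+1,b}\leq L^{-(k+1)-(h+1)/(b+1)-2h-13},$$
which improves upon the target by the factor $L^{-1/(b+1)}$ coming from the extra $+1$ in the numerator.

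The heart of the argument is the case $B_m=k+1$, where~\eqref{e:B_m_large} forces $r\geq 2$ bad sub-intervals with $H$-values $h_1,\dots,h_r\geq 1$ summing to $h-1$. Marginalising the inductive bound over the scale-$k$ label $b'\in\{0,\dots,k\}$ gives
$$q_{k,h'}:=\mathbb{P}[H_{(k,0)}=h']\leq (k+1)\,L^{-k-h'/(k+1)-2h'-13},$$
the worst case coming from $b'=k$. Bounding the position count by $\binom{L}{r}\leq L^r$ and the number of ordered compositions of $h-1$ into $r$ positive parts by $\binom{h-2}{r-1}$, the product over sub-intervals collapses to
$$p_{k+1,h,k+1}\leq L^{-(h-1)/(k+1)-2(h-1)}\sum_{r\geq 2}\binom{h-2}{r-1}u^{r},\qquad u:=(k+1)L^{-(k+12)}.$$
The inner sum equals $u\bigl[(1+u)^{h-2}-1\bigr]$ and, in the regime $(h-2)u\leq 1$, is bounded by $e\,(h-2)u^2$. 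Matching the target $L^{-(k+1)-h/(k+2)-2h-13}$ then reduces to
$$3(h-2)(k+1)^2\leq L^{k+8+(h-k-2)/[(k+1)(k+2)]}.$$
For $h\leq k+2$ the exponent on the right is at least $k+7$, which easily absorbs the polynomial left-hand side; for $h>k+2$ the correction $(h-k-2)/[(k+1)(k+2)]$ grows linearly in $h$ and dominates the logarithmic growth of $\log(h-2)$. The complementary regime $(h-2)u>1$ requires $h$ to be astronomically large in $k$, and there the common factor $L^{-2h}$ together with the surplus $L^{-(h-1)/(k+1)}$ already suffices.

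I expect the main technical obstacle to lie in the tight bookkeeping of Case~B: the combinatorial entropy (factors $L^r$ from positions, $\binom{h-2}{r-1}$ from compositions, and the $(k+1)^r$ penalty for marginalising $b'$) must be absorbed by the $L^{-13r-2(h-1)}$ gain delivered by the inductive hypothesis. This is exactly what the additive constant $-13$ and the two-units-per-defect coefficient $-2h$ in the statement are designed to afford, leaving the finer correction $-h/(b+1)$ available for the percolation arguments of Sections~\ref{s:percolation}--\ref{s:summability}.
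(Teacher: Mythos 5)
Your proposal is correct and follows essentially the same inductive structure as the paper's proof: the same base case, the same split on $B_m<k+1$ versus $B_m=k+1$, and the same union bound over positions of the $r\geq 2$ bad sub-intervals, their $H$-values, and their $B$-values (with the paper's $(k+1)^r$ entropy for the $B$-values recovered through your marginalization of $b'$ at scale $k$). The only difference is in the final algebra of Case~2, where you replace the paper's looser $h^r$ bound on compositions by the exact $\binom{h-2}{r-1}$, evaluate $\sum_{r\ge2}\binom{h-2}{r-1}u^r=u[(1+u)^{h-2}-1]$ by the binomial theorem, and split on $(h-2)u\lessgtr1$ instead of the paper's threshold $h_*=L[k(k+1)(k+2)+1]$; the complementary regime $(h-2)u>1$ is dispatched rather tersely, but the bound $(1+u)^{h-2}\leq e^{(h-2)u}$ together with the surplus exponent $(h-1)/(k+1)-h/(k+2)=(h-k-2)/[(k+1)(k+2)]$ does close it, so the proof is sound.
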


Before we prove the above lemma, let us quickly show how it leads to Lemma~\ref{l:environment}.

\begin{proof}[Proof of Lemma~\ref{l:environment}]
  The proof follows from the simple calculation:
  \begin{equation}
    \begin{split}
      \mathbb{P}[(k,0) \text{ is bad}]
      & \overset{\eqref{e:def_good_int}}{=}  \sum_{h = 1}^{\infty} \mathbb{P}[H_{(k,0)} = h]
        = \sum_{h = 1}^{\infty} \sum_{b=0}^{k} \mathbb{P}[H_{(k,0)} = h, B_{(k,0)}=b] \\
      & \overset{\eqref{e:def_pkhb}}{\leq} \sum_{h = 1}^{\infty} \sum_{b=0}^{k} p_{k,h,b}
        \overset{\eqref{e:environment2}}{\leq} \sum_{h = 1}^{\infty} \sum_{b=0}^{k} \scalezero^{- \big( k + \frac{h}{b+1} \big) - 2 h - 13} \\
      & \leq (k+1) \sum_{h = 1}^{\infty} \scalezero^{- k - 2 h - 13}\\
      &  = (k+1) \frac{\scalezero^{-k-15}}{1- \scalezero^{-2}} \leq \scalezero^{-k/2},
    \end{split}
  \end{equation}
where the last inequality is trivially satisfied for all $k\geq 0$.
\end{proof}

We finish this section by providing its missing piece.

\begin{proof}[Proof of Lemma~\ref{l:decay_khb}]
  We start with the case $k = 0$ where the intervals are single points.
  Recall that $\rho < \scalezero^{-16}$ and that, since $k = 0$, we have $B_{(0, 0)} = 0$.
  Thus, for every $h \geq 1$,
  \begin{equation*}
    p_{0, h, 0} = \mathbb{P} [ H_{(0, 0)} = h ] \overset{\eqref{e:dist_geo_scalezero}} \leq \rho^h \leq \scalezero^{-3h - 13} = \scalezero^{- \big( 0 + \frac{h}{0 + 1} \big) - 2 h - 13},
  \end{equation*}
where the second inequality sign follows from our choice of $\rho$ and the fact that t$h\geq 1$.
  This establishes \eqref{e:environment2} for $k = 0$.

  Suppose now that for some $k \geq 0$, \eqref{e:environment2} holds and let us complete the induction by proving that it also holds for $k + 1$.
  For this, consider the interval $m = (k + 1, 0)$, some $h \geq 1$ and let us split the argument into two cases depending on the value of $b$.
  \medskip

  {\bf Case 1 - ($b<k+1$):}

   In this case, \eqref{e:H_m} and \eqref{e:B_m} imply that there is a single bad sub-interval $m'$ in $\mathcal{Q}_{m}$ with $H_{m'}=h+1$ and $B_{m'} = B_m = b$.
  Since $\mathcal{Q}_{m}=\{k\}\times\{0, 1, \dots, \scalezero-1 \}$,
  \begin{equation}
    \label{e:p_k+1,h,b_1}
    \begin{split}
      p_{k+1,h,b} & = \mathbb{P} \big[ H_m = h, B_m = b \big]
                    \leq  \mathbb{P} \big[ \mcup_{i = 0}^{\scalezero-1} [H_{(k, i)} = h + 1, B_{(k, i)} = b ] \big]
                    \leq \scalezero \,\, p_{k,h+1,b}
    \end{split}
  \end{equation}
  Now using that we have assumed \eqref{e:environment2} to hold for $k$ we have
  \begin{equation}
    \label{e:p_k+1,h,b_2}
    \scalezero \,\, p_{k,h+1,b} \leq \scalezero^{ 1 - \big(k + \frac{ h + 1 }{ b + 1 }\big)- 2( h+1 ) -13} \leq \scalezero^{-\big( (k + 1) + \frac{h}{ b+1 }\big) - 2 h - 13}.
  \end{equation}
  Plugging \eqref{e:p_k+1,h,b_2} into \eqref{e:p_k+1,h,b_1} yields the desired bound \eqref{e:environment2} for $k + 1$.

  \medskip

  {\bf Case 2 - ($b = k + 1$):}

  Since $B_{m} = b = k + 1$,
  by \eqref{e:B_m}, there are $r\geq 2$ bad sub-intervals $m_i$,  $i \in \{1, \dots, r \}$, with $\{m_1, \dots, m_r\} \subseteq \mathcal{Q}_{m}$.
  Using \eqref{e:H_m} and writing $H_{m_i} = h_i \geq 1$ we have
  \begin{equation}
    \label{e:choices_h}
    \sum_{i = 1}^r h_i = h - 1.
  \end{equation}
  We also write $B_{m_i} = b_i \in [0, k]$ to decompose
  \begin{equation}\label{e:decomposicao_dos_pks}
    p_{k + 1, h, b} = \sum_{r = 2}^{\scalezero}\;\;
    \sum_{\substack{\{m_1, \dots, m_r\} \\ \text{in } \mathcal{Q}_m}} \;\;
    \sum_{\substack{h_1, \dots, h_r\geq 1; \\ \sum_i h_i = h - 1}} \;\;
    \sum_{0\leq b_1, \dots, b_r\leq k} \;\;
    \mathbb{P} \Big[ \mcap_{i = 1}^r \big[ H_{m_i} = h_i, B_{m_i} = b_i \big] \Big].
  \end{equation}
  Recalling our assumption that \eqref{e:environment2} holds for $k$, we obtain
  \begin{equation}\label{e:multiplicacao_dos_pks}
    \begin{split}
      \mathbb{P} \Big[ & \mcap_{i = 1}^r \big[ H_{m_i} = h_i, B_{m_i} = b_i \big] \Big]\\
                       & = \prod_{i=1}^r p_{k,h_i,b_i} \leq \prod_{i=1}^r \scalezero^{-\big(k+\frac{h_i}{b_i+1}\big)-2h_i - 13}
                         \overset{\eqref{e:choices_h}}\leq \scalezero^{-\big(rk+\frac{h-1}{k+1}\big)-2 (h-1) - 13r}.
    \end{split}
  \end{equation}
  Since there are at most $\scalezero^{r}$ choices for the intervals $m_1, \ldots, m_r$, $h^r$ choices for $h_1, \ldots, h_r$ and $(k+1)^r$ choices for $b_1, \ldots, b_r$ we have by \eqref{e:decomposicao_dos_pks} and \eqref{e:multiplicacao_dos_pks} that
  \begin{equation}
    \begin{split}
      p_{k + 1, h, k + 1}
      & \leq \sum_{r = 2}^{\scalezero} \scalezero^{ - \big( rk + \frac{h - 1}{k + 1} \big) - 2(h - 1) - 13r} \cdot \scalezero^{r} \cdot h^r \cdot (k + 1)^r\\
      & \leq \sum_{r = 2}^{\scalezero} \scalezero^{-  \big( rk + \frac{h}{k + 1} \big) + 2 - 2h + 2 - 13r + r + r \log_{\scalezero} h + r \log_{\scalezero} (k + 1)}\\
      & = \sum_{r = 2}^{\scalezero} \scalezero^{- rk - \frac{h}{k + 1} - 2h + 4 - 12r + r \log_{\scalezero} h + r \log_{\scalezero}(k + 1)}.
    \end{split}
  \end{equation}
  Dividing both sides by our desired estimate on $p_{k + 1, h, k + 1}$ (see \eqref{e:environment2}), we obtain
  \begin{equation}
    \label{p_k+1overM}
    \dfrac{p_{k + 1, h, k + 1}}{\scalezero^{-\big( (k + 1) + \frac{h}{(k + 1) + 1} \big) - 2h - 13}}
    = \dfrac{p_{k + 1, h, k + 1}}{\scalezero^{- k - \frac{h}{k + 2} - 2h - 14}}
    \leq \sum_{r= 2}^{\scalezero}\scalezero^{\varphi(r,h,k)},
  \end{equation}
  where
  \begin{equation}
    \varphi(r,h,k)
    := - (r - 1) k - \frac{h}{(k + 1)(k + 2)} - 12 r + 18 + r \log_{\scalezero} (h) + r \log_{\scalezero}(k + 1),
  \end{equation} is defined for $k\geq 0$, $h\geq 1$ and $2\leq r\leq \scalezero$.

  We will be done once we show that the right-hand side in \eqref{p_k+1overM} is bounded above by $1$.
  For that, it suffices to show that $\varphi(r,h,k) < -r$ for any $r\in[2,\scalezero]$, because in this case the right-hand side in \eqref{p_k+1overM} will then be bounded by $\sum_{r \geq 2} \scalezero^{-r} < 1$ as desired.
  We will prove the bound $\varphi < -r$ by considering two cases depending on whether $h$ is larger than $h_* := \scalezero [k \; (k + 1) (k + 2) + 1]$ or not.

  We first consider the case $h \geq h_*$.
  Then as $h \geq k + 1$,
  \begin{equation}
    r \log_{\scalezero} (h) + r \log_{\scalezero}(k + 1)
    \leq 2  r  \log_{\scalezero} (h)
  \end{equation}
  and we now claim that

  \begin{equation}
    \label{e:bound_1200logh}
    2r \log_{\scalezero}(h) \leq \frac{h}{(k + 1)(k + 2)} + 2r, \text{ for every $h \geq h_*$}.
  \end{equation}
  Indeed, one can first verify this bound on $h = h_*$, which can be done for every $k \geq 0$ using that $h_*\leq \scalezero(k+1)^3$ and $6r \log_{\scalezero}(k+1)\leq 6\cdot \scalezero \log_{\scalezero} (k+1)\leq \scalezero k$.
  One can then show that the derivative of the left hand side is bounded by the derivative on the right for every $h \geq h_*$ and $k \geq 0$.
  Having \eqref{e:bound_1200logh} we can use the fact that $9r \geq 18$ to conclude that $\varphi < -r$ as desired.

  We now deal with the case $h \leq h_* < \scalezero(k + 1)^3$.
  In this case, for any $r \geq 2$ and $k \geq 0$,
  \begin{equation}
    r \log_{\scalezero} (h) + r \log_{\scalezero}(k + 1)
    < r \log_{\scalezero} \big( \scalezero (k + 1)^4 \big)
    = 4r\log_{\scalezero}(k + 1) + r \leq (r - 1)k + r,
  \end{equation}
  where the last inequality follows from $r/2 \leq r - 1$ for $r\geq 2$ and $8 \log_{\scalezero}(k + 1)\leq k$ for $k\geq 0$.
  Using once again that $9r \geq 18$, we obtain $\varphi < -2r < -r$.
  This finishes the proof of the lemma.
\end{proof}

\section{Traversal of good and bad columns}
\label{s:percolation}

The renormalization scheme presented in Section \ref{s:environment} was aimed at controlling the defects of the one-dimensional environment encoded by the sequence $\xi$.
An important ingredient was the multiscale defect classification for the collection of intervals $I_m$ used to pave the one-dimensional space $\mathbb{Z}$ .
From now on, we consider randomness in both directions (horizontal and vertical) encoded respectively by the sequences $\xi^x$ e $\xi^y$ as in \eqref{e:distr_xi}, and we turn our attention to the bond percolation process on $\mathbb{Z}^2$ distributed according to $\mathbb{P}_p^{\xi^x, \xi^y}$.

It will be convenient to consider horizontal and vertical intervals separately, denoted by $I^x_m$ and $I^y_m$, respectively.
Similarly, for $d \in \{x, y\}$, we define the associated random variables $H^d_m$ (c.f.\ \eqref{e:H_m}) and $B^d_m$ (c.f.\ \eqref{e:B_m}).
Each of the intervals $I^x_m$ and $I^y_m$ will be labeled good or bad depending respectively on the random variables $H_m^x$ or $H_m^y$ (c.f.\ \eqref{e:def_good_int}).

Let us now give an intuitive description of the content of the present section.
We will suppose that $\xi^x$ and $\xi^y$ are such that the intervals $I^x_{(k,0)}$ and $I^y_{(k,0)}$ are good for every $k\geq 0$, which has positive probability by Lemma \ref{l:environment}.
Given any such $\xi^x$ and $\xi^y$, we wish to show that, for $p$ sufficiently large, there exists an infinite percolation cluster $\mathbb{P}^{\xi^x, \xi^y}_p$ a.s.
In order to do so, we will show that rectangles of type $I^x_{(k+1,0)} \times I^y_{(k,0)}$ are very likely crossed in the hard direction.
This allows one to build an infinite cluster by gluing crossings in a standard fashion.

Even though $I^x_{(k+1,0)}$ is assumed to be good, it may contain one bad subinterval within (see \eqref{e:good_single_bad}).
Hence in order to cross such a rectangle, we will be forced to traverse a bad rectangle of the type $I^x_{(k,j)} \times I^y_{(k,0)}$.
Bearing in mind the recursive definition of $H^x_{(k,j)}$, as one inspects the microscopic structure of the defect inside $I^x_{(k,j)}$, one may find out that it is a combination of defects with higher labels which are present at lower scale subintervals.
Therefore, we will need to actually study crossings of rectangles of type $I^x_{(k',j)} \times I^y_{(k,0)}$ with $k' < k$ and with arbitrary high values of the defect $H^x_{(k',j)}$.

The purpose of the next sections is to build a multiscale scheme that allows us to obtain a detailed quantitative analysis of these crossing events depending on the intensity of the underlying defects.

\color{black}

\subsection{Fractal sets and remainder}

As mentioned above, in order to find a percolation path, we will eventually be forced to cross very difficult vertical strips (columns) and horizontal strips (rows).
An example of such a difficult column would be $I^x_m \times \mathbb{Z}$, where a large label $H^x_m$ has been assigned to the interval $I^x_m$.
Therefore, the environment can be regarded as a collection of traps extending along the coordinate directions and labeled according to their difficulty.

To cross such a trap, say a bad vertical strip, the strategy will be to fix a large set $S \subseteq \mathbb{Z}$ of possible starting points at one side of the bad strip and understand which of these starting points will extend to a crossings all the way to the opposite side of the strip.
We now introduce some geometric definitions that will help us characterize such sets $S$.

For every $S \subseteq \mathbb{Z}$ and every scale $k$ we denote
\begin{equation}
  \label{e:Z_k_A}
  Z_k(S) := \big\{ m \in M_k; I_m \cap S \neq \varnothing \big\}
\end{equation}
the collection of indices $m$'s at scale $k$, whose associated intervals $I_m$ %that
intersect $S$.
For any pair of non-empty collections $\color{black}Z, Z' \subseteq M_k$
\begin{display}
  \label{e:monotone_M}
  we say that $Z \prec Z'$ if
  $(k, i) \in Z$ and $(k, j) \in Z'$ imply $i < j$.
\end{display}
In particular, if $Z \prec Z'$ then $Z$ and $Z'$ are disjoint.
We can now introduce the concepts of $k$-ordered sets which will be useful later.
\begin{definition}[$k$-ordered sets]
  Let $\xi$ be given. A family $S_1, \dots, S_\ell$ of subsets of $\mathbb{Z}$ is said to be $k$-ordered if $Z_k(S_1) \prec \dots \prec Z_k(S_\ell)$.
\end{definition}

\begin{definition}[$k$-good sets]
  \label{d:good_set}
  A set $S$ is said to be $k$-good if all intervals in $Z_k(S)$ are good according to $\xi^y$.
\end{definition}

We are now in position to define $k$-fractal sets, introduced recursively as follows:
\begin{definition}[$k$-fractal sets]
  \label{d:fractal}
  Let $\xi$ be given.
  A singleton $S=\{i\}$ in $\mathbb{Z}$ is said $0$-fractal if $i$ is good, that is if $\xi_i = 0$.
  For $k \geq 1$, a subset $S$ of $\mathbb{Z}$ is a $k$-fractal if $S = S_1 \cup \dots \cup S_{2^\expfractal}$, where
\begin{itemize}
\item $S$ is $k$-good;
\item the family $\{S_0, \dots, S_{2^\expfractal}\}$ is $(k-1)$-ordered and
\item each $S_i$ is a $(k - 1)$-fractal.
 \end{itemize}
\end{definition}
We observe that every $k$-fractal set $S$ has cardinality $|S|=2^{\expfractal k}$.

\begin{remark}
The reason why we chose to write powers of two such as $2^\expfractal$ instead of their more current form $1024$ is  to emphasize the effect that crossing defects will have on the exponents.
\end{remark}

\begin{figure}[h]
  \centering
  \includegraphics[width=0.8\textwidth]{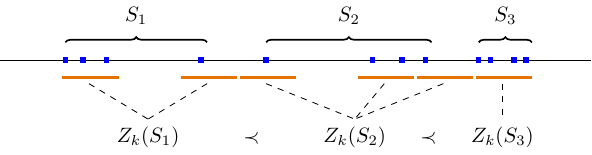}
  \caption{A $k$-ordered family of sets $S_1, S_2$ and $S_3$.
    The orange $k$-intervals are good.
    They are disjoint and form the collections $Z_k(S_1)$, $Z_k(S_2)$ and $Z_k(S_3)$.}
\end{figure}

The requirement that $k$-fractal sets be unions over $(k-1)$-ordered families imposes a sparseness property because they must be contained in disjoint collections of $(k-1)$-intervals.
This sparseness will be important later, since it will allow us to explore independence when we try to find crossings in parallel disjoint strips.
However, in some situations it will be important to disallow $k$-fractal sets to be too sparse and for such cases we introduce the following property
\begin{equation}
  \label{e:grouped}
   \text{a $k$-fractal $S \subseteq \mathbb{Z}$ is said to be \emph{$k$-grouped} if $S \subseteq I_m$ for some $m \in M_k$}.
\end{equation}

As an example on how to generate a $k$-fractal set which is also $k$-grouped, one can start with a good $k$-interval $I_m$, $m\in M_k$, and choose $2^\expfractal$ indices in $\mathcal{Q}_m$ corresponding to good $(k-1)$-intervals.
For each of these $(k-1)$-intervals, choose again $2^\expfractal$ of its $(k-2)$-subintervals that are good.
Iterate until finding $2^{\expfractal k}$ good points (i.e.\ good $0$-intervals), which will form a $k$-grouped, $k$-fractal set.

We finish this section by providing some extra definitions that will be important in order to establish our exploration procedure of the configuration inside different rectangles.

We define a \emph{rectangle} $R = [a, b) \times [c, d)$ as being the subgraph of $\mathbb{Z}^2$ whose sites belong to $[a,b] \times [c,d]$ and with edge set $\mathcal{E} = \big\{ \{z, w\}; z \in [a,b] \times [c,d], w \in [a,b) \times [c,d), |z - w| = 1 \big\}$.
Note that we slightly abuse notation, since we add to $R = [a, b) \times [c, d)$ the sites on its \emph{right} and \emph{top faces}, $F^r(R)=\{b\} \times [c, d]$ and $F^t(R)=[a,b] \times \{d\}$, including also edges that connect to $F^r$ and $F^t$.
That is done in order to guarantee that adjacent rectangles, sharing a face, have disjoint set of edges which will help in the matter of exploring independence, while we do not worry about the sites in their intersection.
We also write $F^l(R)=\{a\} \times [c,d]$ and $F^b(R)=[a,b] \times \{c\}$ respectively for the \emph{left face} and \emph{bottom face} of $R$.

Note that for all $k\geq 0$, the rectangles
\begin{equation}
  \label{e:boxes}
  R_{(k,i,j)} := I_{(k,i)} \times I_{(k,j)}\overset{\eqref{e:def_Ik}} = \big[iL_k, (i+1)L_k\big) \times[jL_k + , (j + 1) L_k)\textrm{ for } i,j\in\mathbb{Z}^2
\end{equation}
are edge-disjoint and pave $\mathbb{Z}^2$ (although they share vertices in their common faces).
Observe also that $R_{(0,i,j)}$ only contains two edges, $(i,j)\sim (i + 1,j)$ and $(i,j)\sim (i, j + 1)$.

Given a rectangle $R = [a, b) \times [c, d)$ and a realization of the percolation in the edges of $R$, we associate for each set $S \subseteq \mathbb{Z}$ such that $S \times \{a\} \subseteq F^l(R)$, the \emph{remainder} of $S$ defined as the set of points in the right face of $R$ that can be connected from $S \times \{a\}$ within $R$, or more precisely:
\begin{equation}
  \label{e:remainder_box}
  \mathcal{R}(S, R) = \Big\{ z \in F^r(R); \text{ some $w \in S\times \{a\}$ is connected to $z$ within $R$} \Big\}.
\end{equation}

\begin{remark}
In what follows, when the rectangle $R$ and the set $S$ are given, we will find it convenient not to distinguish between $S$ and $S \times \{a\}$.
Therefore, we sometimes regard a set $S$ as being a subset of $F^l(R)$.
\end{remark}

In order to recover independence, it will be useful to force the above crossings to take place inside  disjoint horizontal slices.
To make that precise, we define: for any given scale $k \geq 0$, any finite interval $I \subseteq \mathbb{Z}$ and any set $S \subseteq \mathbb{Z}$
\begin{equation}
  \label{e:remainder_box_A}
  \mathcal{R}_k(S, I) = \bigcup_{m' \in Z_k(S)} \mathcal{R}\big(S \cap  I^y_{m'}, I \times I^y_{m'}\big),
\end{equation}
see Figure~\ref{f:remainder}.
Note that paths starting from the vertices in $S \cap I^y_{m'}$ are required to remain within a single box $R=I \times I^y_{m'}$, a requirement that is also illustrated in Figure~\ref{f:remainder}.
Also, if we have $S\subset I^y_m$ for some $m\in M_k$ (that is $S$ is $k$-grouped), then $\mathcal{R}_k(S,I) = \mathcal{R}(S, I\times I^y_m)$.

\begin{figure}[h]
\centering
 \includegraphics[width=.5\textwidth]{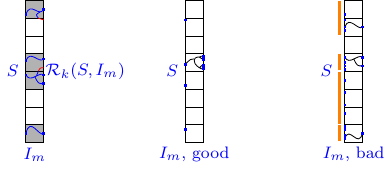}
\caption{On the left: An illustration of $S$ and its remainder $\mathcal{R}_k(S,I_m)$.
Crossings have to take place inside their corresponding box (notice that the red connections are invalid).
On the middle: When $I_n$ is good, even if $S$ is sparse, the remainder is expected to contains a $k$-grouped $k$-fractal set.
On the right: When $I_m$ is bad, if $S$ is a sufficiently large family the remainder is expected to contain a $k$-fractal (although one does not expect it to be $k$-grouped).
\label{f:remainder}
}
\end{figure}

\subsection{Grouping and traversing}

Starting from a set of points $S$ on the left face of a rectangle, we explore the percolation configuration towards the right inside that rectangle.
There are important aspects of to be taken into consideration during this exploration: whether the column is good or bad, how many $k$-fractals are contained in $S$ and its remainder $\mathcal{R}(S, I)$ and whether they are grouped.

Attempting to cross good or bad columns will bring different challenges and we may require different features for the starting set $S$.
Below we introduce two quantities ($u_k$ and $v_k$) that will give a precise meaning to what we expect to happen in each of these situations.
The quantities $u_k$ and $v_k$ refer to the crossing of a good and a bad column respectively and they are central to the proof of Theorem~\ref{t:main}

Roughly speaking, Definition~\ref{d:u_k} introduces the first unlikely event: ``one tries to cross a good column, starting from a $k$-fractal set, but fails to obtain a remainder that is $k$-grouped and $k$-fractal''.

\begin{definition}
  \label{d:u_k}
  For $k \geq 0$, we introduce
  \begin{equation}
    \label{e:u_k}
    u_k(p) = \sup_{S, m, \xi^x, \xi^y} \mathbb{P}^{\xi^x, \xi^y}_p \Big[
    \text{$\mathcal{R}_k (S, I^x_m)$ does not contain a $k$-grouped, $k$-fractal set}
    \Big],
  \end{equation}
  where the suppremum is taken over all sets $S\subseteq \mathbb{Z}$, indices $m \in M_k$ and $\xi^x, \xi^y$ for which
  \begin{enumerate}[\qquad a)]
  \item {\bf ``one starts from a $k$-fractal'':} that is, $S$ and $\xi^y$ are such that $S$ forms a $k$-fractal.
  \item {\bf ``the column we are trying to cross is good'':} i.e. $\xi^x$ is such that $H^x_m = 0$;
  \end{enumerate}
  See Figure \ref{f:remainder}.
\end{definition}

\begin{remark}
  \label{r:iterate_uk}
  Note that the suppremum in the definition of $u_k$ runs over $k$-fractal sets $S$ that are not necessarily $k$-grouped.
  Much to the contrary, they could be very spread out, which is to be expected in case $S$ was obtained after crossing a very difficult column.
  In what follows, we show that when $p$ is close to one, $u_k$ decreases fast with $k$, which helps us cross good columns.
\end{remark}

We will now introduce the quantity $v_k$, corresponding to the probability of the second unlikely event.
Roughly speaking, it controls the probability that: ``one has attempted to cross an $h$-bad column with a sufficiently large initial set, but failed to obtain a $k$-fractal as a remainder''.

\begin{definition}
  \label{d:v_k}
  For $k \geq 0$, let
  \begin{equation}
    \label{e:v_k}
    v_k(p) = \sup_{h, S, m, \xi^x, \xi^y} \mathbb{P}^{\xi^x, \xi^y}_p \Big[
    \mathcal{R}_k(S, I^x_m) \text{ does not contain a $k$-fractal set}
    \Big],
  \end{equation}
  where the suppremum is taken over all sets $S\subseteq \mathbb{Z}$, indices $m \in M_k$, intensities $h \geq 1$ and environments $\xi^x, \xi^y$ for which
  \begin{enumerate}[\qquad a)]
  \item {\bf ``the column to be crossed has defect $h$'':} i.e. $\xi^x$ is such that $H^x_m = h$;
  \item {\bf ``$S$ is composed of a large family of $k$-fractals'':} $\xi^y$ is such that $S = S_1 \cup \cdots \cup S_{2^\ell}$, where $\ell = {\expdefect(h-1)}$ and $\{S_1, \ldots, S_{2^\ell}\}$ is a $k$-ordered family of $k$-fractal sets.
  \end{enumerate}
  See Figure \ref{f:remainder}.
\end{definition}

\begin{remark}
  \label{r:loss_v_k}
  The intuition behind the definition is that: ``given a defective column with intensity of defect $h \geq 1$, if one tries to traverse it starting from a family of $2^{\expdefect(h-1)}$ fractals, typically the remainder still contains at least one $k$-fractal''.
  If we compare the cardinality of the starting set ($2^{\expfractal k + \expdefect(h-1)}$) to that of  its remainder ($\geq 2^{\expfractal k }$), at most a fraction of $2^{\expdefect(h-1)}$ is lost due to the presence of the defect.
  In other words, the intensity of the defect controls how much we expect to divide the size of $S$ as it crosses the defective column.
\end{remark}

\begin{remark}
Let $\xi^x$ and $\xi^y$ be given.
If $I^x_m$ is a $k$-interval, $H^x_{m}\in \{0,1\}$ and $S$ is a $k$-fractal set which is $k$-grouped, then \eqref{e:remainder_box_A}, \eqref{e:u_k} and \eqref{e:v_k} imply
\begin{equation}
  \label{e:max_v_k_u_v}
  \mathbb{P}_p^{\xi^x, \xi^y} \big(\text{$\mathcal{R}_k(S, I^x_{m})$ does not contain a $k$-grouped, $k$-fractal set} \big) \leq \max\{u_k(p), v_k(p)\}.
\end{equation}
Since we will bound $u_k$ and $v_k$ by the same quantity, the above may be roughly understood as: provided we start from a $k$-regrouped $k$-fractal set it is not much harder to cross a column with a defect $H = 1$ than it is to cross a good column.
\end{remark}
\medskip

The heart of the proof of our main Theorem~\ref{t:main}, is to control the quantities $u_k$ and $v_k$.
More precisely, the next lemma contains the majority of our work and it shows that, for $p$ sufficiently close to one, $u_k$ and $v_k$ converge to zero quickly.

\begin{lemma}
  \label{l:summability}
  If $p > (1 - \scalezero^{ -10 })^{1/4}$ then for every $k \geq 0$
  \begin{eqnarray}
    \label{e:summability}
    \max\{u_k(p), v_k(p)\} \leq \scalezero^{ - k - 10}.
  \end{eqnarray}
\end{lemma}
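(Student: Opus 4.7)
I would prove Lemma~\ref{l:summability} by simultaneous induction on $k \geq 0$ for $u_k$ and $v_k$, both bounded by $L^{-k-10}$.

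The base case $k = 0$ reduces to elementary probability on single edges. A $0$-column $I^x_m = \{i\}$ is a line of horizontal edges $(i,j) \to (i+1,j)$, each opening with probability $p^{\xi^x_i + 1}$. For $u_0$ (good column, $\xi^x_i = 0$), crossing succeeds with probability $p$, so $u_0 \leq 1 - p$; the assumption $p > (1-L^{-10})^{1/4}$ together with $1-p \leq 1-p^4$ gives $1-p \leq L^{-10}$. For $v_0$ (defect $h$), each crossing edge opens with probability $p^{h+1}$; starting from $2^{4(h-1)}$ independent $0$-fractals on the left face, the probability that none survives is $(1-p^{h+1})^{2^{4(h-1)}}$. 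The exponent $4$ in $2^{4(h-1)}$ is exactly what is needed so that, using Bernoulli's inequality $1-p^{h+1} \leq (h+1)(1-p)$ together with the smallness of $1-p$, this quantity stays below $L^{-10}$ uniformly in $h$.

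For the inductive step, assume $\max\{u_{k-1}, v_{k-1}\} \leq L^{-k-9}$. To bound $u_k$, I would use property \eqref{e:good_single_bad}: a good $k$-column contains at most one bad $(k-1)$-sub-column, and if present it has $H=1$. Decompose the input $k$-fractal $S$ into its $2^{10}$ $(k-1)$-ordered $(k-1)$-fractal components, and cross the $L$ sub-columns from left to right, applying $u_{k-1}$ on each good sub-column and $v_{k-1}$ with $h=1$ (which requires exactly one $(k-1)$-fractal as input) on the at-most-one bad one. The $(k-1)$-ordered structure guarantees independence between sub-fractals through the $\mathcal{R}_{k-1}$ confinement into disjoint horizontal strips, and the output of each step (either $(k-1)$-grouped or not, but still a $(k-1)$-fractal) is suitable as input for the next step.

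To bound $v_k$ with a defect-$h$ $k$-column, I would split according to \eqref{e:H_m}: either a single $(k-1)$-sub-column carries defect $h+1$ (so one application of $v_{k-1}$ suffices, with a suitably enlarged input family), or $r \geq 2$ bad sub-columns with defects $h_1,\dots,h_r$ summing to $h-1$ appear. In the latter case, route the initial family of $2^{4(h-1)}$ $k$-fractals through successive $v_{k-1}$ applications, allocating a sub-family of size $2^{4(h_i-1)}$ at the $i$-th defective sub-column and inserting $u_{k-1}$ crossings on the good intermediate sub-columns. The identity $\sum_i 4(h_i - 1) = 4(h-1) - 4(r-1)$ provides the slack needed to absorb the combinatorial cost of enumerating defect configurations.

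The main obstacle is precisely this combinatorial accounting in the $v_k$ step, mirroring the spirit of the proof of Lemma~\ref{l:decay_khb}: summing over the number $r$, the positions, and the magnitudes $h_i$ of defects inside the $k$-column must be controlled by the starting budget $2^{4(h-1)}$ together with the inductive factor $L^{-k-9}$. The constants $L = 10^6$, the fractal branching $2^{10}$, and the family-size growth rate $2^{4(h-1)}$ are jointly tuned so that the combinatorial overhead leaves room for the target $L^{-k-10}$; executing this accounting carefully is what I would expect to occupy Sections~\ref{s:contraction} and \ref{s:summability} that the excerpt defers.
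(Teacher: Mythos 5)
Your base case ($k=0$) matches the paper's computation and is fine. The inductive step, however, has a genuine gap that prevents the argument from closing.

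Your proposal bounds $u_k$ and $v_k$ by crossing the $L$ sub-columns ``from left to right'' and applying $u_{k-1}$ or $v_{k-1}$ at each one, which is a union bound over $L$ sub-columns. This yields at best a \emph{linear} recursion of the form $\max\{u_{k+1},v_{k+1}\} \lesssim L\cdot\max\{u_k,v_k\}$. With the inductive hypothesis $\max\{u_k,v_k\}\leq L^{-k-10}$ this produces $L^{-k-9}$, which is strictly larger than the target $L^{-(k+1)-10}=L^{-k-11}$; worse, iterating from $u_0\approx L^{-10}$ would give $u_k\approx L^{k-10}$, which diverges. The paper instead establishes the \emph{quadratic} recursion $u_{k+1},v_{k+1}\leq L^6(\max\{u_k,v_k\})^2$ (Lemmas~\ref{l:grouping} and~\ref{l:battle}), and it is only this squaring that makes the induction close: $L^6 L^{-2k-20}=L^{-2k-14}\leq L^{-k-11}$ for all $k\geq 0$. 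The square arises from carefully engineered independence, e.g.\ splitting the starting family $S=S_1\cup S_2$ into two halves confined to disjoint horizontal strips in Lemma~\ref{l:algebra}, and in Lemma~\ref{l:recovery} from requiring two out of three independent regrouping attempts to fail. You never isolate any mechanism that produces a quadratic contraction, so as written the scheme cannot reach $L^{-k-10}$.

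A second, related omission: you claim that after crossing all $L$ sub-columns the outputs ``are suitable as input for the next step'' and eventually form a $k$-grouped, $k$-fractal set for the $u_k$ bound. But the remainder $\mathcal{R}_{k-1}(S_i,I)$ of a $(k-1)$-fractal $S_i$ is by construction confined to the same $(k-1)$-intervals (in the $y$-direction) as $S_i$. If the components $S_1,\dots,S_{2^{10}}$ of the input $k$-fractal are spread over several $k$-intervals---which is allowed, and indeed expected after having previously crossed a defect---then their remainders stay spread out and cannot spontaneously assemble into a $k$-grouped set. This is precisely what the paper's ``recovery'' mechanism (Definition~\ref{d:recovery}, Lemmas~\ref{l:branch} and~\ref{l:recovery}) is for: it uses horizontal \emph{and vertical} corridors inside the good region to interconnect the scattered remainders, producing $L-1$ $k$-fractal sets that all live in a single $(k+1)$-interval and hence can be regrouped. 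Your left-to-right column-by-column crossing never uses vertical crossings and therefore has no way to regroup; that step is genuinely missing, not merely deferred.
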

The proof of Lemma \ref{l:summability} is presented in Section~\ref{s:contraction} below.

\section{Proof of decay for $u_k$ and $v_k$}
\label{s:contraction}

The objective of this section is to prove Lemma~\ref{l:summability}, which will follow from Lemmas~\ref{l:grouping} and~\ref{l:battle}.
Intuitively speaking these auxiliary lemmas give inductive bound on $u_{k+1}(p)$ and $v_{k+1}(p)$, respectively, in terms of $\max\{u_k(p), v_k(p)\}^2$.
For $p$ sufficiently close to one, this shows that $u_k(p)$ and $v_k(p)$ contract towards zero, ultimately leading to \eqref{e:summability}.
We start with some auxiliary preparation.

\subsection{Crossing multiple columns}
\label{s:multiple}

The definitions of $u_k$ and $v_k$ help us understand the effect of traversing a single good or bad column respectively.
But in our inductive argument, we will regard a column of scale $k + 1$ as being composed of $\scalezero$ columns of scale $k$.

This section is therefore dedicated to auxiliary lemmas that analyze the cumulative effect of traversing many columns, one after the other.
This motivates the definition of a good block below, which is reminiscent of that of a good interval \eqref{e:good_single_bad}.

\begin{definition}[$k$-blocks]
  \label{d:good_blocks}
  We define a \emph{$k$-block} as a union of consecutive $k$-intervals, that is $I = \cup_{i=i_0}^{i_1} I_{(k,i)}$.
  Moreover
  \begin{enumerate}[\quad a)]
  \item we define the \emph{weight} $H_{(k, I)}$ of $I$ as $H_{(k, i_0)} + H_{(k, i_0 + 1)} + \dots + H_{(k, i_1)}$;
  \item we say that $I$ is $k$-good if $H_{(k, I)} \leq 1$ and
  \item we introduce the \emph{length} $\ell_k(I)$ as being $i_1 - i_0$.
  \end{enumerate}
\end{definition}

We note that we only deal with blocks of length $\ell_k(I) \leq L$, which is enough to bridge between scales $k$ and $k + 1$.
In case $i_0 = j \scalezero$ and $i_1 = (j+1) \scalezero -1$ for some $j \in \mathbb{Z}$ the good $k$-block is indeed a good interval at scale $k+1$.
\medskip

We introduce below the notion of corridors, which are paths in our renormalized lattice.

\begin{definition}[Corridors]
  \label{d:corridor}
  A (horizontal) $k$-corridor is a path $\mathcal{C} = \big( (i_0, j),$ $(i_0 + 1, j),$ $\dots, (i_1, j) \big)$ in $\mathbb{Z}^2$.
  Moreover, given $\xi^x$ and $\xi^y$, we say that such a horizontal corridor $\mathcal{C}$ is \emph{good} if
  \begin{itemize}
  \item $I^y_{(k, j)}$ is a good $k$-interval and
  \item the $k$-block $I = \cup_{i = i_0}^{i_1} I_{(k, i)}$ is $k$-good.
  \end{itemize}
  Finally, given a horizontal $k$-corridor $\mathcal{C}$ as above and a $k$-fractal set $S\subseteq I^y_{(k, j)}$, we say that $\mathcal{C}$ is \emph{well-crossed} by $S$ if the remainder $\mathcal{R}_k(S, I)$ contains a $k$-fractal set.
  We define (vertical) $k$-corridors and when they are well crossed analogously.
\end{definition}

The fact that $S \subseteq I^y_m$ in Definition \ref{d:corridor} guarantees that
$\mathcal{R}_k(S,I) \subseteq I^y_m$, hence every $k$-fractal set in $\mathcal{R}_k(S,I)$ is automatically $k$-grouped (see  \eqref{e:remainder_box_A}).
The following lemma relates the probability of crossing a $k$-corridor in terms of the quantities $u_k$ and $v_k$.

\begin{lemma}[Traversal of good corridors and blocks]
  \label{l:corridor}
  If $\mathcal{C} = \big( (i_0, j), \dots, (i_1, j) \big)$ is a good $k$-corridor, then for every $k$-fractal set $S \subseteq F^l(R_{(k, i_0, j)})$ we have
  \begin{eqnarray}
    \label{e:prob_corridor_traversed}
    \mathbb{P}_p^{\xi^x, \xi^y} \big( \mathcal{C} \text{ is not well crossed by $S$} \big) \leq (i_1 - i_0 + 1) \max\{u_k(p), v_k(p)\}.
  \end{eqnarray}
  Moreover, if $I=\cup_{i=i_0}^{i_1}I_{(k,i)}$ is a good $k$-block with $i_1-i_0\geq 1$ and $S$ is any $k$-fractal (not necessarily $k$-grouped), we have
  \begin{eqnarray}
    \label{e:several_group}
    \mathbb{P}^{ \xi^x, \xi^y }_p
    \Bigg(  \displaystyle{
    \begin{split}
      \textrm{$\mathcal{R}_k \big(S, I\big)$ does not contain \,\,\,} \\
      \text{ any $k$-grouped, $k$-fractal set }
    \end{split}}
    \Bigg)
    \leq (i_1 - i_0+1) \max\{u_k(p), v_k(p)\}.
  \end{eqnarray}
\end{lemma}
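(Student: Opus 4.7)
The plan is to traverse the $(i_1-i_0+1)$ columns of $I$ one $k$-interval at a time, inductively producing a chain $T_0 = S,\,T_1,\ldots,T_{i_1-i_0+1}$ where each $T_{t+1}$ is chosen by a fixed deterministic rule (say, the lexicographically smallest such object) inside $\mathcal{R}_k(T_t, I^x_{(k, i_0+t)})$. Since $I$ is a good $k$-block, $H_{(k,I)}\leq 1$ forces at most one column to have $H^x=1$, with all the remaining ones good. For each good column the definition of $u_k$ applied to the $k$-fractal $T_t$ will bound by $u_k(p)$ the probability that $\mathcal{R}_k(T_t, I^x_{(k, i_0+t)})$ contains no $k$-grouped, $k$-fractal set. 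For the at-most-one bad column I would invoke $v_k$: since $h=1$, the parameter $\ell = \expdefect(h-1) = 0$ in Definition~\ref{d:v_k} reduces the hypothesis to $T_t$ being a single $k$-fractal, so the failure probability is at most $v_k(p)$. The sub-events at distinct columns depend only on the states of edges inside the pairwise edge-disjoint rectangles indexed by $I^x_{(k, i_0+t)}$, hence they are conditionally independent given the history, and a standard union bound will yield the target $(i_1-i_0+1)\max\{u_k(p),v_k(p)\}$.

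For part 1 the extra ingredient is that $S \subseteq F^l(R_{(k, i_0, j)})$ forces $S$ into $I^y_{(k,j)}$, so $S$ is $k$-grouped and $Z_k(T_t)=\{(k,j)\}$ for every $t$. Consequently $\mathcal{R}_k(T_t, I^x_{(k, i_0+t)}) = \mathcal{R}(T_t, R_{(k, i_0+t, j)}) \subseteq I^y_{(k,j)}$, so every $k$-fractal extracted along the way is automatically $k$-grouped, and the chain ends with a $k$-fractal $T_{i_1-i_0+1}$ inside $I^y_{(k,j)}$ reachable from $S$ by paths inside the strip $I \times I^y_{(k,j)}$; this is exactly what it means for the corridor to be well-crossed by $S$.

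For part 2 the starting $S$ is only a $k$-fractal and may be spread across several $k$-intervals, so $\mathcal{R}_k(S, I)$ is a union of remainders from several horizontal strips. The key observation I would rely on is that, as soon as some $T_t$ collapses inside a single $I^y_{m''}$, one has $Z_k(T_t)=\{m''\}$ and $\mathcal{R}_k(T_t, \cdot) \subseteq I^y_{m''}$, so every subsequent $T_{t+s}$ remains $k$-grouped inside $I^y_{m''}$. Combining $i_1 - i_0 \geq 1$ with the good-block hypothesis (at most one bad column), at least one of the first two columns is good, and applying $u_k$ at that column to the $k$-fractal $T_t$ produces a $k$-grouped $T_{t+1}$. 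Hence $T_{i_1-i_0+1}$ will always be $k$-grouped, and by concatenating the intermediate single-strip crossings the whole chain lies inside $\mathcal{R}_k(S, I)$, furnishing the required $k$-grouped, $k$-fractal set. The main obstacle will be precisely this strip-collapse bookkeeping: one has to verify that the iteratively built $T_t$'s, which invoke $\mathcal{R}_k$ column by column, genuinely embed into the single-shot remainder $\mathcal{R}_k(S, I)$ once the collapse has happened, and that the definitions of $u_k$ and $v_k$ do admit as input a $k$-fractal which need not be $k$-grouped — both points hold but are the place where the independence across columns and the precise definition of $\mathcal{R}_k$ interact most delicately.
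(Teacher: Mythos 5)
Your proof is correct and follows essentially the same route as the paper's: both arguments chain remainders column by column, use the observation that a good $k$-block has at most one $H^x=1$ column (so $u_k$ applies to the good columns and $v_k$ with $h=1$, $\ell=0$ to the bad one), exploit independence of the percolation across the edge-disjoint columns, and note that after at most two columns the remainder lands inside a single $I^y_m$ and stays $k$-grouped thereafter. The only cosmetic difference is that the paper handles the first two columns as a single $2\max\{u_k,v_k\}$ step (via \eqref{e:initial_group}) and then reduces to the corridor estimate of the first part, while you run the one-column-at-a-time iteration uniformly; the bound and the independence/concatenation structure are identical.
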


\begin{figure}[!htb]
  \centering
  \includegraphics[width=.9\textwidth]{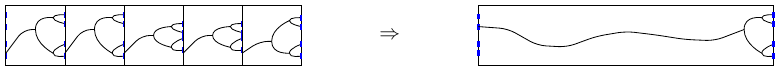}
  \caption{We illustrate a $k$-corridor $R = I \times I^y_m$, where $I$ is a $k$-block. If all the $k$-intervals the form $I$ are well crossed, then $I$ is well crossed.}
  \label{f:grouping}
\end{figure}

\begin{proof}
  Note that
  \begin{equation}
    \label{eq:1}
    \mathbb{P}_p^{\xi^x, \xi^y} \big( \mathcal{C} \text{ is not well crossed} \big) = \mathbb{P}_p^{\xi^x, \xi^y} \big( \mathcal{R}_k(S, I) \text{ does not  contain a $k$-fractal set}\big)\\
  \end{equation}
  Let $I=\cup_{i=i_0}^{i_1} I_{(k,i)}$ and let $\mathfrak{F}$ denote the set of subsets of $I^y_m$ that contain a $k$-fractal set.
  Define $S^{(i_0)}=S$, which by assumption belongs to $\mathfrak{F}$.
  \begin{equation}
    \label{e:iterate_u_k}
    \begin{split}
      \mathbb{P}^{\xi^x, \xi^y}_p \big( \mathcal{R}_k(S, I) &\text{ contains a $k$-fractal set}\big) \\
 & \geq  \sum_{S^{(i_0+1)},\ldots ,S^{(i_1)} \in \mathfrak{F}}  \mathbb{P}^{\xi^x, \xi^y}_p \Big( \bigcap_{i = i_0}^{i_1-1} \Big\{S^{(i + 1)} = \mathcal{R}\big(S^{(i)}, I^x_{(k,i)}\times I^y_{m}\big) \Big\} \Big).
    \end{split}
  \end{equation}
  The events appearing in the intersection on the right-hand side in \eqref{e:iterate_u_k} are independent because they are measurable with respect to the percolation process inside disjoint rectangles of type $I^x_{(k,i)}\times I^y_{m}$.
  Therefore, for a fixed choice for $S^{(i_0+1)}, \ldots, S^{(i_1)}$ we have
  \begin{equation}
    \begin{split}
      \mathbb{P}^{\xi^x, \xi^y}_p  \Big( \bigcap_{i = i_0}^{i_1-1} \Big\{S^{(i + 1)} = \mathcal{R}\big(S^{(i)}, I^x_{(k,i)}\big) \Big\} \Big) =
      \prod_{i = i_0}^{i_1-1} \mathbb{P}^{\xi^x, \xi^y}_p \Big(S^{(i + 1)}= \mathcal{R}\big(S^{(i)}, I^x_{(k,i)}\big)\Big)
    \end{split}
  \end{equation}
  Summing over $S^{(i)} \in \mathfrak{F}$ we obtain
  \begin{equation}
    \begin{split}
      \sum_{S^{(i)} \in \mathfrak{F}}
      & \mathbb{P}^{\xi^x, \xi^y}_p \Big(  S^{(i)} = \mathcal{R}\big(S^{(i-1)}, I^x_{(k,i-1)}\big)  \Big) \\
      & = \mathbb{P}^{\xi^x, \xi^y}_p \Big(\mathcal{R}\big(S^{(i-1)}, I^x_{(k,i-1)}\big) \textrm{ contains a $k$-fractal set}\Big) \overset{\eqref{e:max_v_k_u_v}}{\geq} 1 - \max\{u_k(p), v_k(p)\}.
    \end{split}
  \end{equation}
  Iterating, we obtain
  \begin{equation}\label{e:final_proof_corridor}
    \begin{split}
      \sum_{S^{(i_0+1)}\cdots S^{(i_1)} \in \mathfrak{F}}  \mathbb{P}^{\xi^x, \xi^y}_p & \Big( \bigcap_{i = i_0}^{i_1-1} \Big\{S^{(i + 1)} = \mathcal{R}\big(S^{(i)}, I^x_{(k,i)}\times I^y_{m}\big) \Big\} \Big)
      \\
    & \geq \big(1 - \max \big\{ u_k(p), v_k(p) \big\} \big)^{i_1 - i_0+1}  \geq  1 - (i_1 - i_0 + 1) \max \{ u_k(p), v_k(p) \}.
    \end{split}
  \end{equation}
  Plugging this estimate into \eqref{e:iterate_u_k} yields the desired result.

  For the second part, note that $H^x_{(k,i_0)} + H^x_{(k,i_0+1)} \leq 1$, so we argue that
  \begin{eqnarray}
    \label{e:initial_group}
    \mathbb{P}^{ \xi^x, \xi^y }_p
    \Bigg(  \displaystyle{
    \begin{split}
      \textrm{$\mathcal{R}_k \big(S, I^x_{ (k, i_0)} \cup I^x_{ (k, i_1)}\big)$ does not\,\,\,\,\,\,\,\,\,\,} \\
      \text{contain a $k$-grouped, $k$-fractal set }
    \end{split}}
    \Bigg)
    \leq 2\max\{u_k(p), v_k(p)\}.
  \end{eqnarray}
  To see why this holds, note that one of three cases occurs: either $H_{(k, 0)}=H_{(k, 1)}=0$; or $H_{(k, 0)}=0$ and $H^x_{(k, 1)}=1$; or $H^x_{(k, 0)}=1$ and $H^x_{(k, 1)}=0$.
  The three cases can be treated similarly and we only deal with the third one.
  If the event displayed in \eqref{e:initial_group} occurs, then either $\mathcal{R}_k \big( S,  I^x_{(k,0)} \big)$ fails to contain a $k$-fractal set, or else it does contain such a $k$-fractal set $S'$ but $\mathcal{R}_k \big(S',  I^x_{(k,1)} \big)$ does not contain a $k$-grouped $k$-fractal set.
  The former event has probability at most $v_k(p)$ and the latter at most $u_k(p)$, so the total probability is bounded by $u_k(p)+v_k(p) \leq 2 \max\{u_k(p),v_k(p)\}$.

  Supposing that $\mathcal{R}_k(S, I^x_{(k_0,i)}\cup I^x_{((k_0+1))})$ contains any $k$-fractal set in $I^y_{(k,j)}$, we consider the corridor $((i_0+2,j),\cdots, (i_1,j))$ and the result follows from \eqref{e:final_proof_corridor} and \eqref{e:initial_group}.
\end{proof}

The next result is purely geometrical and shows how we can use crossings at scale $k$ in order to connect $k$-fractals to $(k + 1)$-fractals.

\begin{lemma}
  \label{l:branch}
  For $k \geq 0$, fix a rectangle $R = I^x \times I^y$ where $I^y$ is an interval at scale $k + 1$ (that is, $I^y = I^y_m$ for some $m \in M_{k + 1}$) and $I^x$ is a $k$-block with length at least $L/2$ (or more precisely, $I^x = \cup_{i = i_0}^{i_1} I^x_{(k, i)}$ with $i_1 - i_0 \geq L/2$).
  Then there exist
  \begin{itemize}
  \item $\scalezero$ disjoint horizontal corridors ($\mathcal{C}_1, \dots, \mathcal{C}_\scalezero$) traversing $R$ horizontally and
  \item four disjoint vertical corridors traversing $R$ vertically,
  \end{itemize}
  such that every crossing of a horizontal corridor intersects every crossing of a vertical corridor.
\end{lemma}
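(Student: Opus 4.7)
The plan is a direct, explicit construction; the lemma is purely combinatorial--geometric (no probability appears yet, since a ``crossing'' of a corridor is just an open path lying in the appropriate $k$-strip), so I would simply exhibit the corridors and then verify disjointness and the intersection property. For the horizontal corridors, note that by \eqref{e:Q_m} the $(k{+}1)$-interval $I^y$ is the disjoint union of exactly $L$ consecutive $k$-sub-intervals, say $J_0, \ldots, J_{L-1}$. Pick an arbitrary integer $j_s \in J_s$ for each $s$, and set
\[
\mathcal{C}_{s+1} \;=\; \big( (i_0, j_s), (i_0{+}1, j_s), \ldots, (i_1, j_s) \big), \qquad s = 0, \ldots, L-1,
\]
a horizontal path traversing $R$ at height $j_s$. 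For the vertical corridors, by hypothesis $I^x$ contains $i_1 - i_0 + 1 \geq L/2 + 1 \geq 4$ $k$-sub-intervals (using $L = 10^6$); choose four distinct such sub-intervals, pick an arbitrary integer $x_t$ in each, and define $\mathcal{D}_t$ to be the analogous vertical path from the bottom to the top of $R$ at abscissa $x_t$. The $L$ horizontal corridors are pairwise vertex-disjoint because the $j_s$ belong to distinct $k$-intervals, and the four vertical ones are disjoint for the same reason.

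The only substantive step is the intersection claim, which I would handle via planarity. Let $K_t$ denote the $k$-sub-interval of $I^x$ containing $x_t$. By the definition \eqref{e:remainder_box_A} of $\mathcal{R}_k$, any crossing of $\mathcal{C}_{s+1}$ is an open path lying in the horizontal strip $I^x \times J_s$, while any crossing of $\mathcal{D}_t$ lies in the vertical strip $K_t \times I^y$. These two strips intersect exactly in the $k$-box $B_{s,t} := K_t \times J_s$. Inside $B_{s,t}$ the horizontal crossing must enter through the left face and exit through the right face (since $K_t$ is sandwiched between the two vertical boundaries of $R$ in the $x$-direction), whereas the vertical crossing must enter through the top face and leave through the bottom. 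A standard Jordan-curve argument in the planar graph $\mathbb{Z}^2$ then forces the two paths to share at least one vertex, which is the desired conclusion.

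I do not foresee any serious obstacle. The lemma is essentially a counting/geometric statement, and the planarity step is the same argument repeatedly invoked throughout two-dimensional percolation. The only arithmetic verifications are that $I^y$ is paved by exactly $L$ sub-intervals at scale $k$ (immediate from \eqref{e:Q_m}) and that $L/2 \geq 4$, which holds trivially for $L = 10^6$.
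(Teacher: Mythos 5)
Your argument is essentially the paper's own: pave $I^y$ by its $L$ sub-intervals $J_0,\dots,J_{L-1}$ at scale $k$, take the $L$ horizontal strips $I^x \times J_s$ as horizontal corridors, pick four $k$-sub-intervals $K_t$ of $I^x$ (possible since $i_1 - i_0 \geq L/2 \geq 4$) and take $K_t \times I^y$ as vertical corridors; the intersection property then follows from the planar Jordan-curve argument you give in each $k$-box $K_t \times J_s$. The paper's proof merely lists the corridors and leaves the planar-crossing verification implicit, so spelling it out is a welcome addition, not a departure. The one thing to fix is notational: by Definition~\ref{d:corridor}, a $k$-corridor is a path in the \emph{index} lattice $M_k$, so the second coordinate of $\mathcal{C}_{s+1}$ must be the $k$-interval \emph{index} of $J_s$ (namely $nL+s$ when $I^y = I^y_{(k+1,n)}$), not an arbitrary microscopic point $j_s \in J_s \subseteq \mathbb{Z}$; as written, $\big((i_0,j_s),\dots,(i_1,j_s)\big)$ designates the strip $I^x \times I^y_{(k,j_s)}$, which is not $J_s$. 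The same remark applies to the abscissae $x_t$ for the vertical corridors. Since your own sentence ``any crossing of $\mathcal{C}_{s+1}$ is an open path lying in the horizontal strip $I^x \times J_s$'' shows you have the right objects in mind, this is a typographical slip rather than a conceptual gap.
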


\begin{proof}
Let $I^y_m=I^y_{(k+1,j)}=\cup_{l=0}^{L-1} I^y_{(k,jL+l)}$.
The horizontal corridors are defined by
\begin{eqnarray*}
\mathcal{C}_l=((i_0,jL+l-1),...,(i_1,jL+l-1)) \textrm{ for } l=1,... , L.
\end{eqnarray*}
For vertical corridors, choose four $k$-intervals in $I^x$ and take their cartesian product with $I^y$.
\end{proof}

\begin{remark}
  \label{r:parallel}
  Observe that the horizontal corridors $\mathcal{C}_1, \dots, \mathcal{C}_\scalezero$ in Lemma~\ref{l:branch} are given by vertical translations of one another.
  Moreover the left face of these corridors pave the interval $I^y$, meaning that every $k$-fractal, $k$-grouped set in $I^y$ is contained in the face of one such corridor.
\end{remark}

The geometric result above gives us an intuition of how we could use the corridors of scale $k$ to connect a $k$-fractal into a $(k + 1)$-fractal, effectively bootstrapping our scale.
This is what we describe as a ``recovery'', which we define precisely in the following.

\begin{definition}
  \label{d:recovery}
  Given a $k$-fractal set $S$ and an interval $I$ contained in a $(k + 1)$-interval, we define the event
  \begin{equation}
    \label{e:recovery}
    \big\{ S \text{ recovers in $I$} \big\}
    := \bigg\{
    \begin{gathered}
      \mathcal{R}_{k + 1}(S, I) \text{ contains a $k$-ordered family with}\\
      \text{$(L - 1)$ $k$-fractal sets in a $(k + 1)$ interval}
  \end{gathered}
    \bigg\},
  \end{equation}
  see Figure~\ref{f:recovery} for some illustrations of the above.
\end{definition}

\begin{remark}
  \label{r:recovery}
  Requiring that the remainder contains a $k$-ordered family composed of $(\scalezero - 1)$ $k$-fractal sets suffices to guarantee that it contains a $(k + 1)$-fractal set, or in other words
  \begin{equation}
    \label{e:recovery_k+1}
    \Big\{ \text{$S$ recovers in $I$} \Big\} \subseteq \Big\{ \mathcal{R}_{k + 1}(S, I) \text{ contains a $(k + 1)$-fractal set}\\ \Big\}.
  \end{equation}
  In fact only $2^{\expfractal}$ such sets would be enough.
  The main reason why we require $(L - 1)$ many $k$-fractal sets instead of one $(k + 1)$-fractal will become clear in \eqref{e:traversal_before_defect} below.
\end{remark}

\begin{figure}[h]
  \centering
  \includegraphics[width=0.6\textwidth]{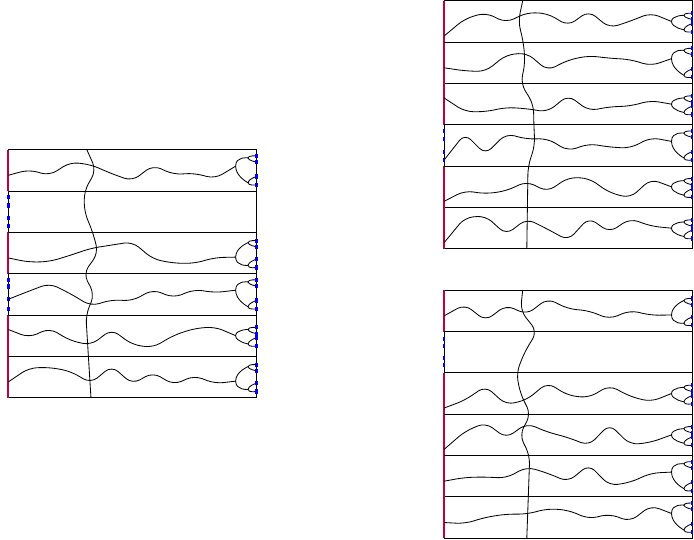}
  \caption{Consider $S'$ and $S''$, two $k$-grouped $k$-fractal sets at the left edge of a column.
    {\bf Left -} if $S'$ and $S''$ belong to the same $(k + 1)$-interval.
    {\bf Right -} if on the other hand $S'$ and $S''$ belong to different $(k + 1)$-intervals.
    {\bf Observation -} in any of the above cases, if all but one of the horizontal corridors are crossed, then in the presence of a single vertical crossing in the box interconnect numerous $k$-grouped $k$-fractal sets on the opposite side (represented in blue on the right face).
  }
  \label{f:recovery}
\end{figure}

Having defined what we mean by ``recovery'' we are ready to estimate its probability.
Roughly speaking, the next lemma derives an upper bound on the probability of not recovering from $k$-fractals to a $(k + 1)$-fractal in terms of $\max \{u_k, v_k\}^2$ (which is vital to prove that these quantities contract to zero).

\begin{lemma}[Recovery]\label{l:recovery}
  Fix $\xi^x$ and $\xi^y$ and let $\{S_1,S_2,S_3\}$ be a $k$-ordered family of $k$-fractal sets.
  Suppose further that $\xi^y$ is such that every interval in $Z_{k+1}(S_1 \cup S_2 \cup S_3)$ is good.
  If $I$ is a good $k$-block for $\xi^x$ with $\ell{(I)} \in [\scalezero/2 + 2, \scalezero]$, then
  \begin{equation}
    \label{e:prob_recovery}
    \begin{split}
      \mathbb{P}^{\xi^x,\xi^y}_p \Big(
      \text{none of $S_1$, $S_2$ and $S_3$ recovers in $I$}
      \Big)
      & \leq \scalezero^5 \big(\max\{u_k(p),v_k(p)\}\big)^2.
    \end{split}
  \end{equation}
\end{lemma}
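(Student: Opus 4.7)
The plan is to apply the geometric decomposition of Lemma~\ref{l:branch} to a suitable $(k+1)$-box and reduce the failure of recovery to a pair of independent corridor-crossing failures, which yields the key factor $(\max\{u_k(p),v_k(p)\})^2$.

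First, pick a $(k+1)$-interval $I^y_{m}$ intersecting (say) $S_1$, and apply Lemma~\ref{l:branch} to $R = I \times I^y_{m}$ -- noting that $\ell(I) \geq L/2 + 2 \geq L/2$ -- obtaining $L$ horizontal $k$-corridors $\mathcal{C}_1,\ldots,\mathcal{C}_L$ (one per $k$-row of $I^y_{m}$) and $4$ vertical $k$-corridors. Since $I^y_{m}$ is a good $(k+1)$-interval, at most one of its $L$ sub $k$-intervals is bad, and combined with $I$ being a good $k$-block, at least $L-1$ of the $\mathcal{C}_l$'s are good $k$-corridors while all four vertical corridors are good. If $L-1$ of these horizontal corridors are well-crossed by $k$-fractal sources on their left faces, the resulting $L-1$ $k$-fractal remainders on the right face of $R$ lie in distinct $k$-rows of $I^y_{m}$ and automatically form the $k$-ordered family required by the recovery event (cf.\ Definition~\ref{d:recovery}).

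Next, I would argue that the three initial fractals $S_1,S_2,S_3$ yield three largely independent ``attempts'' at recovery: each $S_i$ already provides $k$-fractal starting sets on the left faces of the horizontal corridors whose rows it occupies, and the four good vertical corridors let one propagate a $k$-fractal output from any single well-crossed horizontal corridor sideways, so as to feed the remaining horizontal corridors. Consequently, simultaneous failure of all three attempts forces at least two edge-disjoint corridor-crossing failures -- either two horizontal failures, two vertical failures, or one of each. By Lemma~\ref{l:corridor} together with the definitions of $u_k$ and $v_k$, each individual corridor failure has probability at most a constant times $\max\{u_k(p),v_k(p)\}$, and edge-disjointness provides independence under $\mathbb{P}_p^{\xi^x,\xi^y}$. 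Summing over the at most $O(L^5)$ choices of the two failed corridors (over the rectangles attached to the $S_i$'s and the $L+4$ corridors per rectangle, with additional slack for rows/positions) yields the desired bound $L^5\big(\max\{u_k(p),v_k(p)\}\big)^2$.

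The principal obstacle is the second step: making rigorous the ``propagation'' mechanism whereby a vertical corridor crossing, combined with the output of one well-crossed horizontal corridor, genuinely supplies $k$-grouped $k$-fractal starting sets on the left faces of the remaining horizontal corridors in $I^y_m$, and pinning down exactly how the three attempts compound into two edge-disjoint failures. A secondary difficulty is the case in which the $S_i$'s span two or more distinct $(k+1)$-intervals (the right-hand picture in Figure~\ref{f:recovery}): there one has to distribute the three attempts across possibly several rectangles while still guaranteeing that the extracted failure events are supported on edge-disjoint $k$-corridors, so that the independence feeding the squared bound survives.
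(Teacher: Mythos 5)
Your high-level plan (use Lemma~\ref{l:branch} and extract a squared factor from two independent failures) matches the paper's strategy, but the proposal is missing the crucial first step that the paper uses to make this rigorous, and the gap you honestly flag at the end is exactly where that step is needed.

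The problem with applying Lemma~\ref{l:branch} directly to a box $R = I \times I^y_m$ is that the $S_i$'s are merely $k$-fractal, not $k$-grouped: each $S_i$ may be scattered over many $k$-intervals and even several $(k+1)$-intervals. Your assertion that ``each $S_i$ already provides $k$-fractal starting sets on the left faces of the horizontal corridors whose rows it occupies'' does not hold --- $S_i \cap I^y_{(k,j)}$ for a single $k$-row $j$ is in general far from a $k$-fractal, so the crossing events feeding the branching step are not available. The paper resolves this by devoting the two leftmost $k$-columns of $I$ to a separate \emph{regrouping} step: from each $S_i$ one attempts to produce a $k$-grouped $k$-fractal $\mathcal{R}_k(S_i, I^x_{(k,i_0)} \cup I^x_{(k,i_0+1)})$, using \eqref{e:initial_group}, and because $\{S_1,S_2,S_3\}$ is $k$-ordered, the three attempts live in disjoint horizontal strips and are therefore independent. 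The probability that two of the three attempts fail is already $\leq 12(\max\{u_k,v_k\})^2$, so with probability $1 - O(\max\{u_k,v_k\}^2)$ one obtains two $k$-grouped $k$-fractals $S',S''$. The hypothesis $\ell(I)\geq L/2+2$ is precisely what leaves a good $k$-block of length $\geq L/2$ to the right of those two consumed columns so that Lemma~\ref{l:branch} can still be invoked.

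Only then does the branching step begin, and it splits into the two cases that you noted as a ``secondary difficulty''. If $Z_{k+1}(S') \neq Z_{k+1}(S'')$ the two ensuing applications of Lemma~\ref{l:branch} live in disjoint $(k+1)$-strips and are independent, giving the squared factor without any further case analysis of which corridor failed. If $Z_{k+1}(S') = Z_{k+1}(S'')$ then $S'$ and $S''$ land on the left faces of two \emph{distinct} horizontal corridors within a single application of Lemma~\ref{l:branch}, and recovery follows as soon as all but one horizontal and all but one vertical corridor are well-crossed; hence simultaneous failure of both recovery attempts genuinely forces two horizontal failures or two vertical failures, independence coming from edge-disjointness of the corridors. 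This explicit dichotomy is what substitutes for the ``propagation'' mechanism you left vague. Without the regrouping step you cannot reduce to $k$-grouped sources, and without the case split on $Z_{k+1}(S')$ versus $Z_{k+1}(S'')$ the two failures you need are not guaranteed to be edge-disjoint, so the squared bound does not follow.
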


In view of \eqref{e:recovery_k+1}, for $S_1$, $S_2$, $S_3$ and $I$ as in the statement of Lemma \ref{l:recovery}, we have
\begin{equation}
  \label{e:recovery}
  \begin{split}
    \mathbb{P}^{\xi^x,\xi^y}_p \Bigg( \bigcap_{i=1,2,3}
    \bigg\{
    \begin{gathered}
      \text{$\mathcal{R}_{k+1}(S_i, I)$ does not contain}\\
      \text{a $(k+1)$-fractal set}
    \end{gathered}
    \bigg \} \Bigg)  \leq \scalezero^5 \big(\max\{u_k(p),v_k(p)\}\big)^2,
  \end{split}
\end{equation}
which shows the high probability of recovery in good parts of the environment.

\begin{proof}
  Without loss of generality suppose $i_0=0$.

  The proof has two main steps that we informally describe as ``regrouping'' and ``branching'':
  \begin{itemize}
  \item ``regrouping'': first we will look at the remainders of $S_1$, $S_2$ and $S_3$ after traversing the two left-most columns, $I^x_{(k, 0)}$ and $I^x_{(k, 1)}$, and try to find at least one pair (say $S'$ and $S''$) of $k$-grouped $k$-fractal sets;
  \item ``branching'': then, assuming that we have succeeded in the previous step, we will use Lemma~\ref{l:branch} once or twice, depending on whether $S'$ and $S''$ belong to the same $k + 1$ box or not, see Figure~\ref{f:recovery} for an illustration of the two cases.
  \end{itemize}
  Turning to the rigorous proof, note first that for each $i\in\{1, 2, 3\}$, \eqref{e:initial_group} readily implies that
  \begin{eqnarray}
    \mathbb{P}^{ \xi^x, \xi^y }_p
    \Bigg(  \displaystyle{
    \begin{split}
      \textrm{$\mathcal{R}_k \big(S_i, I^x_{ (k, 0)} \cup I^x_{ (k, 1)}\big)$ does not\,\,\,\,\,\,\,\,\,\,} \\
      \text{contain a $k$-grouped, $k$-fractal set }
    \end{split}}
    \Bigg)
    \leq 2\max\{u_k(p), v_k(p)\}.
    \label{e:fail_group_2_columns}
  \end{eqnarray}
  Notice that for different indices $i \in \{1,2,3\}$, the events in \eqref{e:fail_group_2_columns} are independent (due to our assumption that the $S_i$'s are $k$-ordered).
  Therefore, we estimate the probability of observing at least two occurrences of such events as follows
  \begin{equation}
    \label{e:initial_group_2}
    \begin{split}
      \mathbb{P}^{ \xi^x, \xi^y }_p
      \Bigg(
      & \displaystyle{
        \begin{split}
          \textrm{for two indices $i \in \{1,2,3\}$, the set $\mathcal{R}_k \big(S_i, I^x_{ (k, 0) } \cup I^x_{ (k, 1) } \big)$} \\
          \textrm{ contains a $k$-grouped, $k$-fractal set \qquad \qquad}
        \end{split}}
        \Bigg)\\
      & \geq
        1 - \mathbb{P}^{ \xi^x, \xi^y }_p
        \Bigg(  \displaystyle{
        \begin{split}
          \textrm{for two indices $i \in \{1,2,3\}$, the set $\mathcal{R}_k \big(S_i, I^x_{ (k, 0) } \cup I^x_{ (k, 1) } \big)$} \\
          \textrm{ does not contain a $k$-grouped, $k$-fractal set \qquad \qquad}
        \end{split}}
        \Bigg)\\
      & \geq 1 - \binom{3}{2} 2 \big(\max\{u_k(p), v_k(p)\}\big)^2 = 1 - 12\big(\max\{u_k(p), v_k(p)\}\big)^2.\\
    \end{split}
  \end{equation}

  We now move to the second step.
  Conditioning that the event displayed in the left hand side of \eqref{e:initial_group_2} occurs, we have that $\mathcal{R}_k(S_1 \cup S_2 \cup S_3, I^x_{ (k, 0)}\cup I^x_{(k,1)})$ contains two $k$-grouped, $k$-fractal sets $S'$ and $S''$.
  Denote $I'=\cup_{j=2}^{i_1}I^x_{(k,j)}$ the portion of $I$ that still remains to be traversed, which we will attempt to do starting from $S'$ and $S''$.
  Note that $\ell(I') = \ell(I) - 2 \geq L/2$.

  We now consider two cases depending on whether $S'$ and $S''$ lie either close or far apart from each other.

  \medskip

  \noindent {\bf Case 1 -} ($Z_{k+1}(S') \neq Z_{k+1}(S'')$:
  In this case, we know that $S'$ and $S''$ belong to disjoint $k + 1$ intervals and we write $S$ for either of them.
  Lemma~\ref{l:branch} gives us four vertical corridors and $\scalezero$ horizontal ones and according to Remark~\ref{r:parallel}, $S$ is contained in the left face of one of these horizontal corridors.
  Assuming without loss of generality that $S \subseteq F^l(\mathcal{C}_1)$,
  \begin{display}
    \label{e:geometric_recover_0}
    if $\mathcal{C}_1$ is well-crossed by $S$ and all other corridors are well-crossed (by arbitrary $k$-fractals), then $\mathcal{R}_{k + 1}(S, I^x)$ contains a $k$-ordered family with $\scalezero$ $k$-fractals.
  \end{display}
  See the top-right picture in Figure~\ref{f:recovery}.
  This implies the following estimate:
  \begin{equation}
    \label{e:recover_spread}
    \begin{split}
      & \mathbb{P}^{\xi^x,\xi^y}_p \Big( \bigcap_{S = S', S''} \big\{ \mathcal{R}_{k+1}(S, I') \textrm{ does not contain $\scalezero$ $k$-ordered, $k$-fractal sets} \big \} \Big)\\
      & \leq
        \mathbb{P}^{\xi^x,\xi^y}_p
        \Bigg(\;
        \begin{split}
          \text{one of the horizontal or one of the vertical}\\
          \text{corridors in Lemma~\ref{l:branch} is not well-crossed}
        \end{split}
        \;\Bigg)^2 \\
      & \leq
        \Big( 2^{16} \scalezero \max \{ u_k(p), v_k(p) \} \Big)^2
        \leq L^4 \big( \max \{ u_k(p), v_k(p) \} \big)^2.
    \end{split}
  \end{equation}

  \medskip

  \noindent {\bf Case 2 -} ($Z_{k+1}(S') = Z_{k+1}(S'')$:
  In this case, we know that $S'$ and $S''$ belong to the same $k + 1$ interval and we use again Lemma~\ref{l:branch} to obtain $\scalezero$ horizontal corridors and four vertical ones.
  As stated in Remark~\ref{r:parallel}, $S'$ and $S''$ must be contained in the left face of two distinct such corridors.
  Assuming without loss of generality that $S' \subseteq F^l(\mathcal{C}_1)$ and $S'' \subseteq F^l(\mathcal{C}_2)$,
  \begin{display}
    \label{e:geometric_recover}
    if all but one horizontal corridor are well-crossed ($\mathcal{C}_1$ by $S'$, $\mathcal{C}_2$ by $S''$ and the others by arbitrary sets) and all but one vertical corridor are well-crossed, then $\mathcal{R}_{k + 1}(S' \cup S'', I^x)$ contains a $k$-ordered family with $(\scalezero - 1)$ $k$-fractal sets.
  \end{display}
  See the left hand side of Figure~\ref{f:recovery}.
  This yields:
  \begin{equation*}
    %\label{e:recover_compact}
    \begin{split}
      & \mathbb{P}^{ \xi^x,\xi^y }_p
        \Big(
        \mathcal{R}_{k+1}( S' \cup S'', I' ) \text{ does not contain $(\scalezero - 1)$ $k$-ordered, $k$-fractal sets} \Big)\\
      & \leq
        \mathbb{P}^{ \xi^x,\xi^y }_p \bigg(
        \begin{split}
          \big\{
          \text{two horizontal corridors are not well-crossed}
          \big\}\\
          \; \mcup \;
          \big\{
          \text{two vertical corridors are not well-crossed}
          \big\}
        \end{split}
        \;
        \bigg) \\
      & \overset{\eqref{e:prob_corridor_traversed}}\leq
        \big( 2^{16} \scalezero \max \{ u_k(p), v_k(p) \} \big)^2 + \big( 4 \scalezero \max \{ u_k(p), v_k(p) \} \big)^2 \leq
        2 \scalezero^4 \big( \max\{u_k(p), v_k(p)\} \big)^2,
    \end{split}
  \end{equation*}
  see left hand side of Figure~\ref{f:recovery}.

  \medskip

  The above bound, together with \eqref{e:recover_spread} allow us to conclude from \eqref{e:initial_group_2} that
  \begin{equation}\label{e:final_proof_recovery}
    \begin{split}
      \mathbb{P}^{\xi^x,\xi^y}_p \Bigg( \bigcap_{i=1,2,3}
      & \bigg\{
      \begin{gathered}
        \text{$\mathcal{R}_{k+1}(S_i, I)$ does not contain a $k$-ordered}\\
        \text{ family of $(\scalezero - 1)$ $k$-fractal sets}
      \end{gathered}
      \bigg \} \Bigg) \\
      & \leq 12 \big( \max\{u_k(p), v_k(p)\}\big)^2 + 2 \scalezero^4 \big( \max\{u_k(p), v_k(p)\}\big)^2 \\
      & \leq \scalezero^5 \big( \max\{u_k(p), v_k(p)\}\big)^2,
    \end{split}
  \end{equation}
  finishing the proof.
\end{proof}

We now state a simple concentration bound for Bernoulli random variables whose proof we include for the readers' convenience.
\begin{lemma} Let $\{X_i\}_{i\in\mathbb{Z}}$ be i.i.d.\ random variables with $\mathbb{P}(X_0 =1) = \alpha = 1-\mathbb{P}(X_0=0)$ for some $\alpha\geq 0.9$.
Then for every $n\geq 1$ we have
\begin{equation}\label{e:concentration}
 \mathbb{P} \bigg( \sum_{i=1}^n X_{i} \leq  n/2 \bigg) \leq 10 (1-\alpha).
\end{equation}
\end{lemma}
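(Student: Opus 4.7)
The plan is to move to the complementary variables and apply a single use of Markov's inequality. Concretely, I set $Y_i := 1 - X_i$, so that the family $\{Y_i\}_{i \in \mathbb{Z}}$ is i.i.d.\ Bernoulli with parameter $1 - \alpha \leq 0.1$. Since $\sum_{i=1}^n X_i + \sum_{i=1}^n Y_i = n$, the event in which we are interested can be rewritten as
\begin{equation*}
  \Big\{ \textstyle\sum_{i=1}^n X_i \leq n/2 \Big\} = \Big\{ \textstyle\sum_{i=1}^n Y_i \geq n/2 \Big\}.
\end{equation*}

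Next, I would compute $E\big[\sum_{i=1}^n Y_i\big] = n(1-\alpha)$ and invoke Markov's inequality to get
\begin{equation*}
  \mathbb{P}\Big( \textstyle\sum_{i=1}^n Y_i \geq n/2 \Big) \;\leq\; \frac{n(1-\alpha)}{n/2} \;=\; 2(1-\alpha) \;\leq\; 10(1-\alpha),
\end{equation*}
which already gives \eqref{e:concentration} with plenty of room to spare. No concentration inequality beyond Markov is required, since the statement is qualitative and the constant $10$ is very loose (a factor of $5$ is left unused).

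There is essentially no obstacle here; the only thing to notice is that, since $\alpha$ is close to $1$, the sum $\sum X_i$ is close to $n$ and therefore being smaller than $n/2$ means the \emph{deficit} $\sum Y_i$ exceeds its mean by a factor of roughly $1/(2(1-\alpha))$, which is exactly the regime where Markov on the deficit succeeds. One could alternatively use Chebyshev or Chernoff to obtain a bound of order $(1-\alpha)$ times an exponentially small term in $n$, but since the statement targets only linear dependence on $1-\alpha$, the Markov argument above is the cleanest route.
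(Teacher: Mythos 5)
Your proof is correct, and it is genuinely different from the paper's. You pass to the deficit variables $Y_i = 1 - X_i$ and apply \emph{first-moment} Markov's inequality, obtaining the bound $2(1-\alpha)$, which is even stronger than the claimed $10(1-\alpha)$ and, notably, does not use the hypothesis $\alpha\geq 0.9$ at all. The paper instead centers the $X_i$, squares, and applies a \emph{second-moment} (Chebyshev-type) argument: it bounds the event by $\{|\sum(X_i-\alpha)|\geq 2n/5\}$ (using $\alpha-1/2\geq 2/5$, which is where $\alpha\geq 0.9$ enters), then uses $\mathrm{Var}(\sum X_i)=n\alpha(1-\alpha)$ to get $25\alpha(1-\alpha)/(4n)\cdot n \le 10(1-\alpha)$. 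Your route is simpler and gives a sharper constant with weaker hypotheses; the paper's second-moment route is slightly more machinery for the same linear-in-$(1-\alpha)$ conclusion, but would be the natural starting point if one wanted $n$-dependent decay. Since only the linear dependence in $1-\alpha$ is exploited downstream, either argument suffices.
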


\begin{proof}
  We have
  \begin{equation}
    \begin{split}
      &\mathbb{P} \bigg( \sum_{i=1}^n X_{i} \leq   n/2  \bigg) =
        \mathbb{P} \bigg( \sum_{i=1}^n ( \alpha - X_{i}) \geq  (\alpha- 1/2) n \bigg)
        \leq \mathbb{P} \bigg( \Big | \sum_{i=1}^n ( X_{i} - \alpha ) \Big| \geq 2n/5  \bigg) \\
      & =\mathbb{P} \bigg( \Big | \sum_{i=1}^n ( X_{i} - \alpha ) \Big|^2 \geq  4n^2/25\bigg) \leq \dfrac{ 25 n\alpha( 1 - \alpha ) }{4 n^2} \leq 10 (1-\alpha),
    \end{split}
  \end{equation}
  where, in the last two steps we have first used Markov's inequality and then the fact that $\alpha<1$ and $n\geq 1$.
\end{proof}

We will also be interested in crossing harder defects (with arbitrary $h$).
Recall from Remark~\ref{r:loss_v_k} that, when crossing a column with intensity $h$, the factor by which the cardinality of $S$ drops will be related with the value of $h$.
More precisely, the definition of $v_k$ suggests one loses a fraction of $2^{\expdefect(h-1)}$ points as we traverse a defect of intensity $h$.
The next lemma extends this description to a bad column that is surrounded by good ones.

\begin{lemma}[Traversing defects]
  \label{l:algebra}
  Let $I = \cup_{i = i_0}^{i_1} I^x_{(k,i)}$ be a union of consecutive $k$-intervals.
  Suppose that
  \begin{itemize}
  \item there exists $i'\in [i_0, i_1]$ such that $H_{(k,i)}= h >0$,
  \item $H_{(k,i)}= 0$  for all $i\in [i_0, i_1]\setminus\{i'\}$ and
  \item $0\leq i_1-i_0\leq \scalezero-1$.
  \end{itemize}
  If $S$ is any $k$-ordered family composed of $2^{l}$ $k$-fractal sets, with $l\geq \expdefect  h $,  then
  \begin{equation}
    \begin{split}
      \label{e:lemma_algebra}
      \mathbb{P}^{\xi^x,\xi^y}_p
      \Bigg(
      \displaystyle{
      \begin{split}
        \mathcal{R}_{k}(S, I) \textrm{ does not contain a $k$-ordered } \\
        \textrm{ family with $2^{l -\expdefect   h }$ $k$-fractal sets \,\,\,\,\,\,\,\,\,\,\, }
   \end{split}
      }
      \Bigg)
      \leq \scalezero^5 \big( \max\{ u_k(p),v_k(p) \} \big)^2.
    \end{split}
  \end{equation}
\end{lemma}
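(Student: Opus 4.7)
The plan is to traverse $I$ in three stages, corresponding to the good left portion $I^L := \bigcup_{i=i_0}^{i'-1} I^x_{(k,i)}$, the lone defective column $I^B := I^x_{(k,i')}$, and the good right portion $I^R := \bigcup_{i=i'+1}^{i_1} I^x_{(k,i)}$, and to exploit the fact that $S$ contains a factor-$16$ surplus over the required $2^{l-\expdefect h}$ $k$-fractals in the remainder.

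\textbf{Step 1 (Partition).} Decompose $S$ into $N := 2^{l-\expdefect(h-1)}=16\cdot 2^{l-\expdefect h}$ consecutive sub-families $B_1,\dots,B_N$, each a $k$-ordered family of exactly $2^{\expdefect(h-1)}$ $k$-fractals. By construction the families $Z_k(B_s)$ project to pairwise disjoint $k$-intervals of the $y$-axis, so the crossings of $I$ that start from different batches live in edge-disjoint horizontal strips and are mutually independent.

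\textbf{Step 2 (Per-batch success).} Declare $B_s$ \emph{successful} if (i) every $k$-fractal of $B_s$ has a remainder in $I^L$ that contains a $k$-grouped, $k$-fractal set; (ii) the resulting $k$-ordered family of $2^{\expdefect(h-1)}$ $k$-grouped $k$-fractals, when used to cross $I^B$, produces at least one $k$-fractal in its remainder; (iii) that $k$-fractal, when used to cross $I^R$, yields a $k$-fractal in the final remainder. Item (i) is handled by \eqref{e:several_group} (applied once per $k$-fractal in $B_s$, then combined by a union bound), item (ii) by the definition of $v_k$ since the family at the left face of $I^B$ meets the hypothesis of Definition~\ref{d:v_k}, and item (iii) by \eqref{e:several_group} again. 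A union bound gives a per-batch failure bound
\begin{equation*}
  q \leq 2^{\expdefect(h-1)}\scalezero\max\{u_k,v_k\} + v_k + \scalezero\max\{u_k,v_k\}.
\end{equation*}

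\textbf{Step 3 (Aggregation).} The event displayed in \eqref{e:lemma_algebra} forces strictly fewer than $2^{l-\expdefect h}=N/16$ batches to be successful, hence at least $\lceil 15N/16\rceil$ of the independent Bernoulli trials $\mathbf{1}\{B_s\text{ fails}\}$ must fire simultaneously. A direct binomial tail (or Chernoff) estimate bounds this probability by $\binom{N}{\lceil 15N/16\rceil}q^{\lceil 15N/16\rceil}$, which is at most $(cq)^{15N/16}$ for a universal constant. Collecting the successful batches, one obtains $\geq 2^{l-\expdefect h}$ $k$-fractals in $\mathcal R_k(S,I)$, and they remain $k$-ordered because the batches themselves were.

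\textbf{Main obstacle.} The real difficulty lies in Step 3: the per-batch failure $q$ carries an unavoidable factor $2^{\expdefect(h-1)}$ from the pre-bad-column union bound in (i), so $q$ is \emph{not} uniformly small in $h$, and naive Markov/Chebyshev estimates only yield a bound linear in $\max\{u_k,v_k\}$. The quadratic target $\scalezero^5(\max\{u_k,v_k\})^2$ is extracted by taking full advantage of the $15/16$-fraction of simultaneous independent failures required, so that the binomial tail contributes $q^{15N/16}$ (worst case $q^{15}$ when $l=\expdefect h$ and $N=16$). Using that the lemma is only non-trivial when $\max\{u_k,v_k\}$ is already small --- in the inductive step of Lemma~\ref{l:summability} we may assume $\max\{u_k,v_k\}\leq \scalezero^{-k-10}$ --- the $q^{15N/16}$ factor dominates the $2^{\expdefect(h-1)}$-type costs and is comfortably absorbed by the $\scalezero^5(\max\{u_k,v_k\})^2$ bound, with the exponent $5$ and the pre-factor $L$ chosen once and for all to absorb the constants coming from the union bound and from the three-stage decomposition.
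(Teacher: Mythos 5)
There is a genuine and fatal gap in Step~2.\ By requiring that \emph{every} one of the $2^{\expdefect(h-1)}$ fractals in a batch $B_s$ produce a $k$-grouped $k$-fractal across $I^L$, you bound the per-batch failure $q$ by a union bound that carries the factor $2^{\expdefect(h-1)}$, so $q$ grows exponentially in $h$.\ The target estimate $\scalezero^5\bigl(\max\{u_k(p),v_k(p)\}\bigr)^2$, however, contains no $h$, and neither does $\max\{u_k,v_k\}$ (it depends only on $k$).\ So for any fixed $k$ with $\max\{u_k,v_k\}>0$ there is an $h$ for which $q$ is bounded away from $0$ (indeed of order $1$).\ In the tightest instance $l=\expdefect h$, so $N=16$, the aggregation in Step~3 only gives something of order $q^{15}$, which no longer decays as $h$ grows.\ Your ``Main obstacle'' paragraph tries to paper over this by invoking $\max\{u_k,v_k\}\le \scalezero^{-k-10}$, but that is a bound in $k$, not in $h$; the lemma is a fixed-$k$ statement that must hold for all $h\ge 1$, and no inductive hypothesis on $k$ makes $2^{\expdefect(h-1)}\max\{u_k,v_k\}$ uniformly small over $h$.

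The paper's proof never pays this price because it never insists that any pre-fixed group of $2^{\expdefect(h-1)}$ fractals all survive the left block.\ Instead it splits $S$ into only two halves $S_1,S_2$ (for the square), and then, within each half, traverses the three blocks $I_1,I_2,I_3$ by the concentration bound \eqref{e:concentration}: at each stage one only asks that \emph{at least half} of the surviving fractals succeed, which costs $10(1-\alpha)$ irrespective of how many fractals there are, so each stage contributes a bound of order $\scalezero^2\max\{u_k,v_k\}$ independently of $h$.\ Crucially, the regrouping into $2^{\expdefect(h-1)}$-size families needed for $v_k$ is done \emph{after} Part~1, from whichever fractals actually survived, rather than being frozen in advance into batches that must survive intact.\ If you want to salvage your Step~2, replace ``every $k$-fractal of $B_s$ succeeds'' by the two-stage majority scheme of the paper (cross $I^L$, re-batch the survivors, then cross $I^B$, then $I^R$), and let \eqref{e:concentration} do the work.\ As written, the union-bound cost in item~(i) cannot be repaired by the binomial tail.
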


\begin{figure}[h]
  \centering
  \includegraphics[scale=.7]{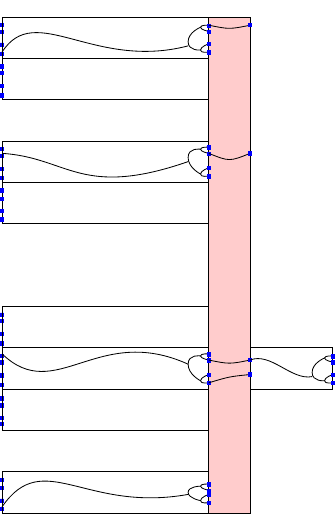}
  \caption{The crossing of $S$ along $I$ will be done in three parts. First, we cross the good region of $I$ until we reach the defect, if it exists, using $u_k$. Then, we cross the defect using $v_k$. And finally, we cross the remaining good region using $u_k$ again.}
  \label{f:three_fases}
\end{figure}

\begin{proof}
  We first write $S = S_1 \cup S_2$ where for each $i = 1, 2$ we have that $S_i=S_{i,1} \cup \cdots \cup S_{d,2^{l-1}}$ is a $k$-ordered family of $2^{l-1}$ $k$-fractal sets.
  Using independence of the percolation inside each strip $ I \times S_i$ we obtain that the left-hand side in \eqref{e:lemma_algebra} is bounded above by
  \begin{equation}
    \prod_{i=1,2} \mathbb{P}^{\xi^x,\xi^y}_p
    \Bigg(
    \displaystyle{
      \begin{split}
        \mathcal{R}_{k}(S_i, I) \textrm{ does not contain a $k$-ordered } \\
        \textrm{ family of $2^{l -\expdefect h }$ $k$-fractal sets\ \,\,\,\,\,\,\,\,\,\,\, }
      \end{split}
    }
    \Bigg).
    \label{e:division_concentration}
  \end{equation}
  The preceding product is responsible for the square appearing on the right-hand side in \eqref{e:lemma_algebra}.

  The rest of the argument is divided into estimates on the probability of crossings the three different regions, see Figure~\ref{f:three_fases}.

  \noindent \textbf{Part 1 - crossing the $k$-block $I_1=\cup_{i=i_0}^{i'-1} I^x_{(k,i)}$:}
  Note that $I_1$ is a union over good $k$-intervals only.
  Recall that $S_1=S_{1,1} \cup \cdots \cup S_{1,2^{l-1}}$ is a $k$-ordered family of $2^{l-1}$ $k$-fractal sets.
  For $j=1,\ldots, 2^{l-1}$, denote
  \begin{equation}
    X_{j} := \mathbbm{1}\big\{\{\mathcal{R}_k(S_{1,j},I_1)\textrm{ contains a $k$-fractal set}\}\big\},
  \end{equation}
  which are independent random variables.

  Let $\alpha:= 1-\scalezero \, u_k(p)$.
  We can assume that $\alpha>0.9$, since if this was not the case we would have $\scalezero^5(\max\{u_k(p), v_k(p)\})^2 \geq 1$ and \eqref{e:lemma_algebra} would hold immediately.

  For each fixed $j \in 1,\ldots, 2^{l-1}$ we can apply \eqref{e:several_group} in order to obtain
  \begin{equation}
    \mathbb{P}_p^{\xi^x, \xi^y}( \mathcal{R}_k(S_{1,j},I_1)\textrm{ contains a $k$-fractal set} ) \geq 1 - (i'-i_0) u_k(p) \geq \alpha.
  \end{equation}
  Therefore the family $(X_{j})_{j}$ dominates an i.i.d.\ collection of random variables having the Bernoulli distribution with parameter $\alpha$.
  Applying \eqref{e:concentration} we have that
  \begin{equation}
    \label{e:first_good_columns}
    \begin{split}
      \mathbb{P}^{\xi^x, \xi^y}_p
      & \Big( \mathcal{R}_{k}(S_1, I_1) \textrm{ does not contain a $k$-ordered family of $2^{l - 2}$, $k$-fractal sets} \Big)\\
      \leq
      & \,\, \mathbb{P}^{\xi^x,\xi^y}_p \bigg( \sum_{j=1,\ldots,2^{l - 1}} X_{j} \leq   2^{ l - 2 }  \bigg)  \leq 10  (1 - \alpha) =10 \scalezero  u_k(p) \leq \scalezero^2  u_k(p),
    \end{split}
  \end{equation}
  and similarly for $S_2$.

  \medskip

  \noindent \textbf{Part 2 - crossing the $k$-block $I_2=I_{(k,i')}^x$:}
  On the event that $\mathcal{R}_{k}(S_1, I_1)$ contains a $k$-ordered family of $ 2^{ l - 2 } $ $k$-fractal sets, we can condition on the realization of these $k$-fractal sets and gather them into families $S'_{j} = S'_{j,1} \cup \cdots \cup S'_{j,2^{\expdefect (h-1)}}$, with $j=1, \ldots,  2^{ l  - \expdefect h +2} $.
  Each of these families is $k$-ordered and contains $2^{\expdefect (h-1)}$ $k$-fractal sets.
  For $j=1, \ldots, 2^{ l - \expdefect h + 2 }$, let
  \begin{equation}
    X'_j := \mathbbm{1}\big\{ \mathcal{R}_{k}( S'_{i,j}, I_2 )\textrm{ contains a $k$-fractal set}\big\},
  \end{equation}
  which are independent random variables.
  Using Definition \ref{d:v_k} one may show that they dominate stochastically a family of i.i.d.\ random variables having the Bernoulli distribution with parameter $\alpha':= 1 - v_k(p)$.
  An argument similar to the one employed in the preceding case shows that we can assume that $\alpha'\geq 0.9$.
  Hence, for $i=1,2$
  \begin{equation}
    \label{e:concentration_defect_1}
    \begin{split}
      \mathbb{P}^{\xi^x,\xi^y}_p
      & \Bigg(
        \displaystyle{
        \begin{split}
          \mathcal{R}_{k}(S_1, I_1\cup I_2) \textrm{ does not contain a } \\
          \textrm{  $k$-ordered family of $ 2^{l - \expdefect h + 1}$ $k$-fractal sets }
        \end{split}
        }
        \Bigg) \\
      & \leq \scalezero^2\,  u_k(p)   +\mathbb{P}^{\xi^x,\xi^y}_p \bigg( \sum_{ i = 1 }^{  2^{l - \expdefect h + 2} } X'_j \leq 2^{l - \expdefect h + 1}\bigg)\\
      & \leq \scalezero^2\, u_k(p) + 10  v_k(p) \leq( \scalezero^2 +10 ) \big(\max \{ u_k(p), v_k(p)\}\big),
    \end{split}
  \end{equation}
  and similarly for $S_2$.
  \medskip

  \noindent \textbf{Part 3 - crossing the $k$-block $I_3=\cup_{i=i'+1}^{i_1}I_{(k,i)}^x$:}
  On the event that $\mathcal{R}_{k}(S_1, I_1 \cup I_2)$ contains a $k$-ordered family of $\ 2^{ l - \expdefect h + 1 }$ $k$-fractal sets,  we can condition on their realization $S_{i,1}'', \cdots, S_{i, 2^{ l - \expdefect h + 1 } }'' $ and denote, for $j=1, \ldots,  2^{ l - \expdefect h + 1 } $,
  \begin{equation}
    X''_j := \mathbbm{1}\big\{\mathcal{R}_{k}( S''_{1,j}, I_3 )\textrm{ contains a $k$-fractal set}\big\}
  \end{equation}
  This family of random variables dominate stochastically a family of i.i.d.\ random Bernoulli random variables with parameter $\alpha= 1 - \scalezero u_k(p)$ (recall that we may assume $\alpha \geq 0.9$).
  Hence, we have:
  \begin{equation}
    \label{e:concentration_2}
    \begin{split}
      \mathbb{P}^{\xi^x,\xi^y}_p
      & \Bigg(
        \displaystyle{
        \begin{split}
          \mathcal{R}_{k}(S_1, I) \textrm{ does not contain a $k$-ordered } \\
          \textrm{ family of $2^{l - \expdefect h }$ $k$-fractal sets\,\,\,\,\,\,\,\,\,\,\,\, }
        \end{split}
        }
        \Bigg) \\
      & \leq  (\scalezero^2 + 10  ) \big(\max \{ u_k(p), v_k(p)\}\big)   +\mathbb{P}^{\xi^x,\xi^y}_p \bigg( \sum_{ i = 1 }^{  2^{l - \expdefect h + 1} } X''_{i,j} \leq 2^{l - \expdefect h } \bigg),\\
      & \leq  (\scalezero^2 + 10  )\big(\max \{ u_k(p), v_k(p)\}\big) + 10\scalezero  u_k(p) \leq 3 \cdot  \scalezero^2 \big(\max \{ u_k(p), v_k(p)\}\big)
    \end{split}
  \end{equation}
  Using \eqref{e:first_good_columns}, \eqref{e:concentration_defect_1} and \eqref{e:concentration_2} in \eqref{e:division_concentration}, we obtain the result.
\end{proof}

\subsection{Recursive inequalities for grouping and traversing}
\label{s:recursive}

In the previous sub-section, we have studied the effect of traversing several columns sequentially.
We can now regard a column at scale $k + 1$ as $\scalezero$ columns in scale $k$ and use the previous lemmas to write recursive inequalities for the quantities $u_k$ and $v_k$.
This is the main purpose of this section.
These inequalities will later be used to show that, for $p$ small, the sequences $u_k(p)$ and $v_k(p)$ converge fast to zero.
We begin with $u_{k+1}$.

\begin{lemma}[Grouping Lemma]
  \label{l:grouping}
  Let $k \geq 0$ and $p \in [0, 1]$. Then
  \begin{equation}
    \label{e:grouping}
    u_{k + 1}(p) \leq \scalezero^6 (\max\{u_k(p), v_k(p)\})^2.
  \end{equation}
\end{lemma}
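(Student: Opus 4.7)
The plan is to derive \eqref{e:grouping} from a single direct application of the Recovery Lemma (Lemma \ref{l:recovery}). Fix an admissible quadruple $(S, m, \xi^x, \xi^y)$ in the supremum defining $u_{k+1}(p)$: $m \in M_{k+1}$ satisfies $H^x_m = 0$, and $S$ is a $(k+1)$-fractal. By Definition \ref{d:fractal}, $S$ decomposes as a $k$-ordered union $S = S_1 \cup \dots \cup S_{2^{\expfractal}}$ of $k$-fractals with $S$ itself being $(k+1)$-good. From these I simply select any three, say $S_1, S_2, S_3$.

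Two observations justify invoking Lemma \ref{l:recovery} on $S_1, S_2, S_3$ with $I = I^x_m$. First, $I^x_m$ viewed as a $k$-block is $k$-good: by \eqref{e:good_single_bad}, at most one of its $L$ constituent $k$-subintervals is bad, and if so it has weight exactly $1$, hence $H_{(k, I^x_m)} \leq 1$; moreover, its length as a $k$-block equals $L - 1$, which lies in $[L/2 + 2, L]$ since $L = 10^6 \geq 6$. Second, because $S$ is $(k+1)$-good, every interval in $Z_{k+1}(S_1 \cup S_2 \cup S_3) \subseteq Z_{k+1}(S)$ is automatically good with respect to $\xi^y$, which is precisely the hypothesis needed.

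Lemma \ref{l:recovery} therefore yields
\[
\mathbb{P}^{\xi^x,\xi^y}_p\big(\text{none of $S_1$, $S_2$, $S_3$ recovers in $I^x_m$}\big) \leq L^5 \big(\max\{u_k(p), v_k(p)\}\big)^2.
\]
On the complementary event, some $S_i$ recovers, meaning that $\mathcal{R}_{k+1}(S_i, I^x_m)$ contains a $k$-ordered family of $L - 1 \geq 2^{\expfractal}$ $k$-fractal sets all sitting inside a single $(k+1)$-interval $m'$. Since the remainder stays within the strips indexed by $Z_{k+1}(S_i) \subseteq Z_{k+1}(S)$, the interval $m'$ must itself be good; thus extracting any $2^{\expfractal}$ of these $k$-fractals produces a $(k+1)$-grouped, $(k+1)$-fractal set. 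Since $S_i \subseteq S$ implies $\mathcal{R}_{k+1}(S_i, I^x_m) \subseteq \mathcal{R}_{k+1}(S, I^x_m)$, this set also lives in $\mathcal{R}_{k+1}(S, I^x_m)$. Taking the supremum over admissible quadruples gives $u_{k+1}(p) \leq L^5 (\max\{u_k(p), v_k(p)\})^2 \leq L^6 (\max\{u_k(p), v_k(p)\})^2$.

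I do not anticipate a genuine obstacle; the Recovery Lemma already absorbs all the geometric content, and the combinatorial bookkeeping required here (decomposing $S$ via Definition \ref{d:fractal}, checking the $k$-block parameters of $I^x_m$, and extracting a $(k+1)$-fractal from the $L - 1$ recovered $k$-fractals) is essentially automatic. The one point worth double-checking in a careful write-up is the claim that the recovered $(k+1)$-interval is itself good—this uses the $(k+1)$-goodness of $S$ rather than its $(k+1)$-fractal structure per se. The apparent slack between Lemma \ref{l:recovery}'s factor $L^5$ and the stated $L^6$ suggests the authors may either have argued slightly more loosely, or inserted an additional union-bound step (e.g.\ over the position of the lone bad $k$-subinterval) that my argument sidesteps.
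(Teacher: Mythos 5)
Your proof follows essentially the same route as the paper's: decompose the $(k+1)$-fractal $S$ into its $2^{\expfractal}$ constituent $k$-fractals via Definition~\ref{d:fractal}, select three of them, and apply the Recovery Lemma~\ref{l:recovery} (equivalently, \eqref{e:recovery}) to $I^x_m$, noting that the recovered family lies inside a single good $(k+1)$-interval and hence yields a $(k+1)$-grouped, $(k+1)$-fractal set. You spell out the verification of the hypotheses of Lemma~\ref{l:recovery} (the block length $L-1\in[L/2+2,L]$ via \eqref{e:good_single_bad}, and the goodness of intervals in $Z_{k+1}(S_1\cup S_2\cup S_3)$ via the $(k+1)$-goodness of $S$) in more detail than the paper, and you correctly observe that the resulting bound is the stronger $L^5(\max\{u_k,v_k\})^2$, with $L^6$ appearing in the statement only to match Lemma~\ref{l:battle}.
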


\begin{proof}
We start by fixing $m \in M_{k+1}$ and a sequence $\xi^x$ for which $I_{m}^x$ is a good interval.
Let $S$ and $\xi^y$ be such that $S$ is a $(k+1)$-fractal.
By the definition of $(k+1)$-fractal sets, $S$ can be written as the union $S=\cup_{i=1}^{2^{\expfractal}} S_i$ of a $k$-ordered family of $k$-fractal sets $S_i$.
We fix $S_1$, $S_2$ and $S_3$ as in Remark~\ref{r:recovery} and the result follows from \eqref{e:recovery} by noting that $\mathcal{R}_{k + 1}(S_i, I)$ is necessarily contained in a single interval of scale $k + 1$.
\end{proof}

Let us now move to the recursion inequality for $v_{k+1}$.

\begin{lemma}[Traversal Lemma]
  \label{l:battle}
  Let $k \geq 0$ and $p \in [0, 1]$. Then
  \begin{equation}
    \label{e:battle}
    v_{k + 1}(p) \leq \scalezero^6 \big(\max\{u_k(p), v_k(p)\}\big)^2.
  \end{equation}
\end{lemma}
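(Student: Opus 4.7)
The plan is to follow the pattern of Lemma~\ref{l:grouping}, but to split the analysis into two subcases that reflect the two nontrivial branches of the recursion~\eqref{e:H_m}. Fix $m \in M_{k+1}$ with $H_m^x = h \geq 1$, an environment $(\xi^x,\xi^y)$, and a family $S = S_1 \cup \cdots \cup S_{2^{4(h-1)}}$ realizing the supremum in Definition~\ref{d:v_k}. Expanding each $(k+1)$-fractal $S_i$ into its constituent $2^{\expfractal}$ $k$-ordered $k$-fractals yields, as the true working object, a $k$-ordered family of $2^{4h+6}$ $k$-fractals.

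In the first case, $B_m^x = k+1$: by~\eqref{e:B_m_large} and~\eqref{e:H_m} the column $I_m^x$ contains $r \geq 2$ bad sub-intervals at positions $i_1 < \cdots < i_r$ with intensities $h_1,\ldots,h_r$ satisfying $\sum_j h_j = h-1$. I would partition $I_m^x$ into $r$ consecutive $k$-blocks each containing exactly one defect (absorbing any trailing all-good portion into the last one), and apply Lemma~\ref{l:algebra} to these blocks in turn. Before the $j$-th step we carry $2^{4h + 6 - 4(h_1 + \cdots + h_{j-1})}$ $k$-ordered $k$-fractals, which comfortably exceeds $2^{\expdefect h_j}$, so the hypothesis of Lemma~\ref{l:algebra} is met and each step loses a factor $2^{4h_j}$. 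After all $r$ steps the remainder still contains $2^{4h + 6 - 4(h-1)} = 2^{\expfractal}$ $k$-ordered $k$-fractals; since they occupy the same $y$-slices as the original $(k+1)$-fractals, they automatically form a $(k+1)$-fractal. A union bound over $r \leq L$ applications, each failing with probability at most $L^5 \max\{u_k,v_k\}^2$, yields the desired bound $L^6 \max\{u_k,v_k\}^2$.

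In the second case, $B_m^x < k+1$: by~\eqref{e:B_m_small} and~\eqref{e:H_m} there is a unique bad sub-interval of $I_m^x$ at some position $i'$ with intensity $h+1$, the remaining $L-1$ sub-intervals being good. This is precisely the case where the ``$-1$'' in~\eqref{e:H_m} is meant to be absorbed by a single recovery step, so the plan is to chain Lemma~\ref{l:algebra} with Lemma~\ref{l:recovery}. Assuming, by the reflection symmetry of undirected percolation, that at least one of the two good blocks adjacent to the defect has length $\geq L/2 + 2$, take $A$ to be a sub-block containing the defect but leaving this much good room on the other side, and let $R$ be the complementary good block. Lemma~\ref{l:algebra} applied to $A$ (whose hypothesis $4h+6 \geq 4(h+1)$ holds) produces a remainder containing at least $2^{4h+6 - 4(h+1)} = 4$ $k$-ordered $k$-fractals with failure probability $\leq L^5 \max\{u_k,v_k\}^2$; any three of these are then input to Lemma~\ref{l:recovery} on $R$, which together with~\eqref{e:recovery_k+1} outputs a $(k+1)$-fractal in the full remainder with a further failure $\leq L^5 \max\{u_k,v_k\}^2$. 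The main technical obstacle is the bookkeeping when $i'$ lies in a narrow window around $L/2$, where neither good block alone meets the length requirement of Lemma~\ref{l:recovery}; this is handled by a small variation of the recovery (e.g.\ performing the two regrouping columns on the same side of the defect as the remaining good block, or shortening the branching block by a constant) that does not change the order of the failure probability. Combining the two cases and passing to the supremum yields $v_{k+1}(p) \leq L^6 \max\{u_k(p), v_k(p)\}^2$.
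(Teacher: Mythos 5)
Your Case 1 ($B_m^x = k+1$) is correct and matches the paper's proof: decompose $I_m^x$ into $r$ consecutive $k$-blocks each containing a single defect, apply Lemma~\ref{l:algebra} iteratively using $\sum_j h_j = h-1$ to track the exponent, and union bound over $r \le L$ steps. Your arithmetic $2^{4h+6-4(h-1)}=2^{\expfractal}$ is correct.

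Your Case 2 has a genuine gap. You only carry out the chain ``Lemma~\ref{l:algebra} on $A$, then Lemma~\ref{l:recovery} on $R$'' when the defect lies on the \emph{left} so that the good block $R$ to its right has length $\ge L/2+2$, and you invoke ``reflection symmetry'' to dispose of the case of a defect on the right. That reduction does not work. The remainder $\mathcal{R}(S,R)$ is a directed object: $S$ lives on the left face and the remainder on the right. Horizontal reflection carries a left-to-right remainder for $\xi^x$ to a \emph{right-to-left} remainder for $R\xi^x$, so the identity you get is
\begin{equation*}
\mathbb{P}^{\xi^x,\xi^y}_p\big[\mathcal{R}_{k+1}(S,I_m^x)\ \text{has no $(k{+}1)$-fractal}\big]
=\mathbb{P}^{R\xi^x,\xi^y}_p\big[\tilde{\mathcal{R}}_{k+1}(S,I_m^x)\ \text{has no $(k{+}1)$-fractal}\big],
\end{equation*}
with $\tilde{\mathcal{R}}$ the reversed remainder. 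The events ``$F\subseteq\mathcal{R}(S,R)$'' (every $z\in F$ is reached from $S$) and ``$S\subseteq\tilde{\mathcal{R}}(F,R)$'' (every $w\in S$ reaches $F$) are different, so there is no passage back to the left-to-right quantity $v_{k+1}$ that is actually being bounded.

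Moreover, even if you take the natural alternative ``recover first on the long left block, then cross the defect'' (which is what the paper does in its Case 2.2), one application of Lemma~\ref{l:recovery} to a single triple of $k$-fractals only produces $L-1\approx 2^{20}$ $k$-fractals, whereas Lemma~\ref{l:algebra} applied to a defect of intensity $h+1$ requires a $k$-ordered family of at least $2^{\expdefect(h+1)}$ $k$-fractal sets; for $h$ large this is not enough. The paper resolves this by applying Lemma~\ref{l:recovery} \emph{separately} to each of the $2^{\expdefect(h-1)}$ constituent $(k+1)$-fractals of $S$ and using the concentration inequality \eqref{e:concentration} to show that at least half of these recoveries succeed, yielding $(L-1)2^{\expdefect(h-1)}/2\ge 2^{\expfractal+\expdefect h}$ $k$-fractals before the defect (this is exactly where the assumption $L\ge 2^{15}+1$ from \eqref{e:L_is_very_large} enters). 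That concentration step is missing from your proposal, so the traversal bound for large $h$ with the defect in the right half of $I_m^x$ is not established. Your parenthetical remark about the narrow window near $L/2$ is a separate, minor bookkeeping issue that both you and the paper gloss over; the concentration argument for the right-half case is the substantive missing piece.
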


\begin{figure}[h]
  \begin{subfigure}{.3\textwidth}
    \centering
    \includegraphics{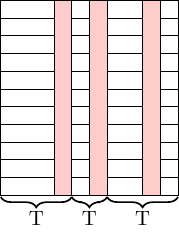}
    \caption{Case 1}
    \label{f:v_k_1}
  \end{subfigure}
  \begin{subfigure}{.3\textwidth}
    \centering
    \includegraphics{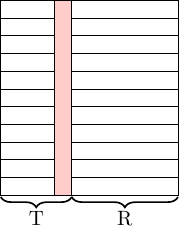}
    \caption{Case 2.1}
    \label{f:v_k_2}
  \end{subfigure}
  \begin{subfigure}{.3\textwidth}
    \centering
    \includegraphics{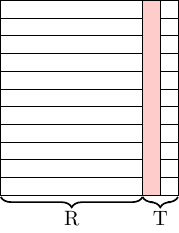}
    \caption{Case 2.2}
    \label{f:v_k_3}
  \end{subfigure}
  \caption{The three cases appearing in the proof of Lemma~\ref{l:battle} corresponding to: \eqref{f:v_k_1} there exist more than one defect that have to be traversed, \eqref{f:v_k_2} there exists a single defect that lies on the left half of the box and \eqref{f:v_k_3} there exists a single defect that lies on the right half of the box..
    The labels T and R, stand respectively to ``traversal'' and ``recovery'';
    Lemma~\ref{l:algebra} is used when the label is $T$ while Lemma~\ref{l:recovery} is used when the label is $R$.
  }
\end{figure}

\begin{proof}
  We may assume that
  \begin{equation}
    \label{e:priori_bound}
    \max\{u_k(p), v_k(p)\} < \scalezero^{-3}
  \end{equation}
  for if it was not case, then the result would hold trivially.
  Fix an index $m \in M_{k+1}$ and a sequence $\xi^x$ such that $h:= H_m^1 \geq 1$.
  Let $S$ and $\xi^y$ be such that $S$ is a $(k+1)$-ordered family of $2^{\expdefect (h-1)}$, $(k+1)$-fractal sets.

  We divide the proof in two cases, depending on whether $B^x_m = k + 1$, or $B^x_m \leq k$.
  In what follows, we assume for simplicity that $m= (k + 1,0)$.

  \medskip

  \noindent {\bf Case 1, ($B^x_m = k + 1$):}
  In this case, $I^x_m$ contains at least two bad sub-intervals.
  Let us denote $r \geq 2$ the amount of such bad sub-intervals.
  Then, there exist indices $0\leq i_1< \ldots< i_r\leq \scalezero-1$ such that $h_j:=H^x_{(k,i_j)} \geq 1$ for $j \in \{1,\ldots, r\}$.
  Moreover, $H^x_{(k,i)}=0$ for every $i\in[0,\scalezero-1]\setminus\{i_1, \cdots, i_r\}$.
  Set $i_0=0$ and define the $k$-blocks:
  \begin{equation}
    C_j=\bigcup_{i=i_{j-1}+1}^{i_j} I^x_{(k,i)}, \textrm{ for }j=1,\cdots, r-1 \,\,\,\, \textrm{ and } \,\,\,\, C_r=\bigcup_{i=i_{r-1}+1}^{L-1} I^x_{(k,i)}
  \end{equation}
  Note that $C_j$ contains the $j$-th bad subinterval $I_{(k,i_j)}$ preceded by the good subintervals between $I_{(k, i_{j-1})}$ and $I_{(k,i_j)}$ (if any). The last one also contains the possible good $k$-intervals to the right of rightmost bad $k$-interval, $I_{(k,i_j)}$ .

  Because $S$ is a $(k+1)$-ordered family containing $2^{\expdefect(h-1)}$ $(k+1)$-fractal sets, it may also be regarded as a $k$-ordered family of $2^{ \expfractal + \expdefect ( h - 1 )}$ $k$-fractal sets.
  Recalling from \eqref{e:H_m} that $h = 1 + \sum_{j=1}^{r} h_j$ we can rewrite this number as
  \begin{equation}
    2^{ \expfractal + \expdefect (h-1) } = 2^{ \expfractal + \expdefect \sum_{j=1}^{r} h_j} .
    \label{e:importance_2}
  \end{equation}
  Informally speaking, by Lemma \ref{l:algebra}, we typically subtract $\expdefect h_j$ to the exponent giving the size of the set $S$ when we cross the vertical strip that projects to the $j$-th $k$-block $C_j$.
  This amounts to a total loss of $(\expdefect \sum_{j=1}^r h_j)$ in the exponent so that we are left with a reminder containing at least $2^{\expfractal}$ $k$-fractal sets, which will form a $k$-ordered family.
  In other words, the remainder contains a $(k+1)$-fractal set.
  We now turn this reasoning into a rigorous argument.

  Define $S_0 = S$ and, inductively, $S_j = \mathcal{R}_k(S_{j - 1}, C_j)$, for $j = 1, \dots, r$.
  Therefore,
  \begin{equation}
    \label{e:v_k+1_case0}
    \begin{split}
      \mathbb{P}^{\xi^x, \xi^y}_p
      & \Big(\bigcap_{j = 1}^{r } \Big\{ \text{$S_{j + 1}$ contains a $k$-ordered family of $2^{\expfractal + \sum_{l = j + 1}^{r}\expdefect h_{i_l}}$ $k$-fractal sets}\Big\} \Big) \\
      & \overset{\eqref{e:lemma_algebra}}{\geq}
        \prod_{j = 1}^{r } \Big( 1 - \scalezero^5 \big( \max\{u_k(p), v_k(p)\} \big)^2  \Big) \overset{\text{Bernoulli}}\geq 1 - \scalezero^6 \big( \max\{u_k(p), v_k(p)\} \big)^2.
    \end{split}
  \end{equation}

  On the event appearing on the left-hand side in \eqref{e:v_k+1_case0}, we have that $\mathcal{R}_k(S,I^x_m)$ contains a $(k+1)$-fractal set.
  Hence,
  \begin{equation}
    v_{k+1}(p) \leq \scalezero^6 \big( \max\{u_k(p), v_k(p)\} \big)^2,
  \end{equation}
  as desired.
  \medskip

  \noindent {\bf Case 2, ($B^x_m < k + 1$):} In this situation, the defect is composed of a single bad column in the previous scale.
  This case is then further split in two possibilities, depending on whether this bad subinterval lies to the left of the middle point of $I_m^x$ or it lies to the right of the middle point of $I_m^x$, see Figure \ref{f:v_k_1}.

  More precisely, let $m' = (k,t) \in \mathcal{Q}_m$, be the single bad subinterval of $I_m^x$.
  Note that $H_{m'}^x = h + 1 \geq 2$.

  {\bf Case 2.1, ``The defect lies on the left half'' $(t < L/2)$:}
  In this case we will first cross the defect and then use the right side to perform a recovery.
  More precisely, since $S$ is a $(k + 1)$-ordered family of $2^{\expdefect (h - 1)}$, $(k + 1)$-fractal sets, we can find $4$ disjoint pieces $S=\cup_{i=1}^{4} S_i$ where each $S_i$ is a $k$-ordered family of $2^{\expdefect h }$ $k$-fractal sets.
  We have by Lemma \ref{l:algebra} with $l=4h$ that
  \begin{equation}
    \begin{split}
      \mathbb{P}^{ \xi^x, \xi^y }_p \bigg(
      \textrm{$\mathcal{R}_k \big( S_i, \cup_{j = 0}^{t} I^x_{ (k, j) } \big)$ contains a $k$-fractal set}
      \bigg)
      \geq 1 - \scalezero^5\big(\max\{u_k(p), v_k(p) \}\big)^2,
    \end{split}
  \end{equation}
  for each $i = 1, \cdots, 4$.
  We now find a lower bound for the event
  \begin{equation}
    A = \Big\{ \displaystyle{
      \textrm{$\mathcal{R}_k \big( S, \cup_{j = 0}^{t} I^x_{ (k, j) } \big)$ contains a $k$-ordered family of three $k$-fractal sets}
    }\Big\}.
  \end{equation}
  More precisely, using a union bound
  \begin{equation}
    \begin{split}
      & \mathbb{P}^{ \xi^x, \xi^y }_p (A)
        \geq \mathbb{P}^{ \xi^x, \xi^y }_p
        \bigg(
        \bigcap_{i=1}^{3}
        \bigg\{
        \begin{split}
          \textrm{$\mathcal{R}_k \big( S_i, \cup_{j = 0}^{t} I^x_{ (k, j) } \big)$ contains}\\ \textrm{ a $k$-fractal set \,\,\,\,\,\,\,\,\,\,\,\,\,\,\,}
        \end{split}
        \bigg\}
        \bigg)
        \geq 1 - 3\cdot \scalezero^5\big(\max\{u_k(p), v_k(p) \}\big)^2.
    \end{split}
    \label{e:ate_o_defeito}
  \end{equation}
  Having crossed the defect, we let $I = \cup_{i = t + 1}^{\scalezero-1} I^x_{( k, i)}$, which is a good $k$-block.
  Conditioning on $A$ and on the realization of the $k$-ordered pair $S'_1$,  $S'_2$ and $S'_3$ of $k$-fractals in the remainder of $S_1, S_2$ and $S_3$, we can use Lemma \ref{l:recovery} (see also \eqref{e:recovery}) to estimate
  \begin{equation}
    \label{e:recovery_2}
    \begin{split}
      & \mathbb{P}^{\xi^x, \xi^y}_p  \big(\text{$\mathcal{R}_{k+1}(S, I_m^x)$ does not contain a $(k+1)$-fractal set}\big |\, A, S'_1, S'_2, S'_3) \\
      & \leq \mathbb{P}^{\xi^x,\xi^y}_p \bigg( \bigcap_{i=1,2}  \big\{ \mathcal{R}_{k+1}(S'_i, I) \textrm{ does not contain a $(k+1)$-fractal in $Z_{k+1}(S'_i)$} \big \} \bigg{|} \, A, S'_1, S'_2, S'_3 \bigg)\\
      & \overset{\eqref{e:recovery}}{\leq} \scalezero^5 \big(\max\{u_k(p),v_k(p)\}\big)^2.
    \end{split}
  \end{equation}
  From \eqref{e:ate_o_defeito} and \eqref{e:recovery_2} we conclude that
  \begin{equation}
    \begin{split}
      v_{k+1}(p)  \leq 4\cdot \scalezero^5(\max\{u_k(p), v_k(p) \})^2  \leq \scalezero^6(\max\{u_k(p), v_k(p) \})^2,
    \end{split}
  \end{equation}
  as claimed.

  {\bf Case 2.2, ``The defect lies on the right half'' $(t \geq L/2)$:}
  In this case we will perform a recovery on the set $S$ before crossing the defect.
  More precisely, recall that $S$ a $(k+1)$-ordered family of $2^{\expdefect (h - 1)}$ $(k + 1)$-fractal sets, say $S = S_1 \cup \cdots \cup S_{2^{\expdefect (h - 1)}}$.
  For each $(k+1)$ fractal set $S_i$, we may write $S_i = \cup_{j = 1}^{2^\expfractal} \widetilde{S}_{i, j}$ where the union runs over a $k$-ordered collection of $k$-fractal sets ${S}_{i,j}$.
  In particular ${S_{i}}$ contains three $k$-fractal sets $S_{i,1}$, $S_{i,2}$ and $S_{i,3}$ that lie inside good $(k+1)$-intervals.
  Therefore, by Lemma \ref{l:recovery}, we have for each $i = 1,\ldots, 2^{4(h-1)}$,
  \begin{eqnarray}
    \label{e:recuperacao_em_cada_caixa_k+1}
    \mathbb{P}^{ \xi^x, \xi^y }_p
    \Bigg(  \displaystyle{
    \begin{split}
      \mathcal{R}_k \big( {S_i}, \cup_{j = 0}^{t - 1} I^x_{ (k, j) } \big) \text{ contains a} \,\,\,\,\,\,\,\,\,\, \\
      \text{ $k$-ordered family of $(\scalezero - 1)$ $k$-fractal sets}
    \end{split}}
    \Bigg)
    \geq 1- \scalezero^5 \max \{ u_k(p), v_k(p) \} )^2.
  \end{eqnarray}
  Let us define $\alpha =  1 - \scalezero^{5} (\max\{u_k(p),v_k(p)\})^2 \geq 1 - \scalezero^{-1}$.
  We may assume $\alpha \geq 0.9$, because otherwise \eqref{e:battle} would immediately hold.

  We now apply the concentration bound \eqref{e:concentration} to the random variables
  \begin{equation}
    X_i = \mathbbm{1} \big\{\mathcal{R}_k \big( {S_i}, \cup_{j = 0}^{t - 1} I^x_{ (k, j) } \big) \text{ contains a $k$-ordered family of $L-1$ $k$-fractal sets}\big\},
  \end{equation}
  If at least half of these random variables attain value $1$ then we will be able to reach as many as \begin{eqnarray}\label{e:L_is_very_large}
  (L-1) 2^{\expdefect(h - 1) } / 2 \geq 2^{ \expfractal + \expdefect h } \textrm{ \ $k$-fractal sets.}
  \end{eqnarray}
To do this, we use that $L=10^6\geq 2^{15}+1$. This means
  \begin{eqnarray}
    \mathbb{P}^{ \xi^x, \xi^y }_p
    \Bigg(  \displaystyle{
    \begin{split}
      \mathcal{R}_k \big( S, \cup_{j = 0}^{t - 1} I^x_{ (k, j) } \big) \text{ does not contain a } \\
      \text{ $k$-ordered family of $2^{ \expfractal + \expdefect h }$ $k$-fractal sets}
    \end{split}}
    \Bigg)
    \leq 10 \cdot \scalezero^5 ( \max\{ u_k(p), v_k(p) \} )^2, \label{e:traversal_before_defect}
  \end{eqnarray}

  If the event displayed in \eqref{e:traversal_before_defect} occurs, we may condition on the realization of a $k$-ordered family of $2^{ \expfractal +\expdefect h}$ $k$-fractal sets $S'\subseteq \mathcal{R}_k \big( S, \cup_{j = 0}^{t - 1} I^1_{ (k, j) } \big)$.
  Applying Lemma \ref{l:algebra} to $S'$, we conclude that
  \begin{eqnarray}\label{e:travessia_no_defeito}
    \mathbb{P}^{ \xi^x, \xi^y }_p
    \Bigg(  \displaystyle{
    \begin{split}
      \mathcal{R}_k \big( S', \cup _{i=t}^{\scalezero -1}I^x_{ (k, i) } \big) \text{ does not contain a\,\,\,\,} \\
      \text{ $k$-ordered family of $2^{ \expfractal }$ $k$-fractal sets}
    \end{split}}
    \Bigg)
    \leq \scalezero^5 ( \max \{ u_k(p), v_k(p) \} )^2.
  \end{eqnarray}
  Note that a $k$-ordered family of $2^{\expfractal}$ $k$-fractal sets form a $(k+1)$-factral, so  the result follows by \eqref{e:traversal_before_defect} and \eqref{e:travessia_no_defeito}, finishing the proof in all cases.
\end{proof}

\subsection{Proof of Lemma~\ref{l:summability}}
\label{s:summabilitry}

We are now in position to proof the decay of $v_k$ and $u_k$.

\begin{proof}[Proof of Lemma \ref{l:summability}]
We proceed by induction on $k$.
Applying Lemmas \ref{l:grouping} and \ref{l:battle} we obtain that, for every $k\geq 0$:
\begin{eqnarray*}
  \max\{u_k(p), v_k(p)\} \leq \scalezero^{ - k - 10} \ \ \ \ \text{implies} \ \ \ \ \max\{u_{k + 1}(p), v_{k + 1}(p)\} \leq \scalezero^{ - (k + 1) - 10}.
\end{eqnarray*}
Hence it suffices to show that $\max\{u_0(p), v_0(p)\} \leq L^{-10}$ whenever $p \geq (1-L^{-10})^{1/4}$.

Recalling that at scale $k=0$ intervals are simply points in $\mathbb{Z}$ and that $u_0(p) = 1 - p$, we obtain:
\begin{equation}
\text{$u_0(p) \leq \scalezero^{ - 10}$ \ \ \ \ as soon as \ \ \ \ $p \geq (1 - \scalezero^{ - 10})^{1/4} \geq (1 - \scalezero^{ - 10})$.}
\end{equation}

Now it only remains to show that $v_0(p) \leq \scalezero^{ - 10}$ whenever $p > (1 - \scalezero^{ -10 })^{1/4}$.
For this, fix $m=(0,0)$, and assume that $\xi_0^x=h$ for some $h\geq 1$.
Fix $\xi^y$ and let $S$ be a $0$-ordered family of $2^{\expdefect (h - 1)}$ $0$-fractal sets for $\xi^y$.
We will consider two cases depending on how large $h$ is.
\medskip

\noindent {\bf Case 1.} $1\leq h\leq 3$:
Let $x$ be any point in $S$ (e.g.\ the smallest point)
Then
\begin{equation}
\label{e:bound_v0_1}
\begin{split}
 \mathbb{P}^{\xi^x, \xi^y}_p & \big(\mathcal{R} (S,I^1_{(0,0)}) \text{ does not contain a $0$-fractal set}\big ) \\
 & \leq \mathbb{P}^{\xi^x,\xi^y}_p \big(\textrm{the edge between $(0,x)$ and $(1,x)$ is closed}\big) \\
& = 1- p^{h+1} \leq 1- p^4 \leq  \scalezero^{-10},
 \end{split}
 \end{equation}
where the last inequality holds because $p \geq (1 - \scalezero^{ -10 })^{1/4}.$

\medskip

\noindent {\bf Case 2.} $h\geq 4$.
We have that
\begin{equation}
\label{e:bound_v0_2}
\begin{split}
  \mathbb{P}^{\xi^x, \xi^y}_p
  & \big(\mathcal{R} (S,I^1_{(0,0)}) \text{does not contain a $0$-fractal set}\big )\\
  & \leq \mathbb{P}^{\xi^x,\xi^y}_p \Big( \bigcap_{ x \in S}\{\textrm{the edge between $(0,x)$ and $(1, 0)$ is open}\} \Big) \\
  & =\big(1 - p^{ h + 1}\big)^{2^{\expdefect (h - 1)}} \leq \exp\big( - 2^{\expdefect (h - 1) }p^{ h + 1 }\big) \\
  & \overset{(h\geq 4)}\leq \exp \big(-(16p)^{h  + 1}\big)  \overset{(h\geq 4,\, p> 1/4)}\leq \exp \big(-(16p)^{5}\big) \leq \scalezero^{ -10 }.
\end{split}
\end{equation}
To see why the last inequality holds, note that $p \geq \tfrac{1}{16}(10 \ln(\scalezero))^{1/5}\approx 0.167$ (because $p \geq 1-L^{-10}$ and $L=10^6$).

Using \eqref{e:bound_v0_1} and \eqref{e:bound_v0_2} and maximizing over the choices of $\xi^x$, $\xi^y$ and $S$ we obtain that $v_0(p) \leq L^{-10}$ which concludes the proof.
\end{proof}

\section{Proof of Theorem~\ref{t:main}}
\label{s:summability}

Having established the decay of $u_k$ and $v_k$ in Lemma~\ref{l:summability}, we are now in position to prove Theorem~\ref{t:main}, which will follow easily once we establish Lemma~\ref{l:build_cluster} below.

\begin{lemma}
  \label{l:build_cluster}
  Let $\xi^x$ and $\xi^y$ be such that,
  \begin{equation}
    \label{e:only_good_intervals}
    \text{for every $ k \geq 0$, the intervals $ I_{ (k, 0) }^x$ and $I_{(k,0)}^y$ are good}.
  \end{equation}
  If $p$ is such that $\max\{ u_k(p), v_k(p) \}$ forms a summable sequence in $k$, then
  \begin{equation}
    \mathbb{P}^{\xi^x,\xi^y}_p\big(\text{$o$ is connected to infinity}\big)>0.
  \end{equation}
\end{lemma}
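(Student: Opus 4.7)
The plan is to build an infinite open cluster in the first quadrant by producing crossings of a nested sequence of ``L-shaped'' regions around the origin, gluing these crossings across scales via planar duality, and finally connecting the origin to this infinite cluster via a finite-energy argument.

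First, since $I^y_{(0,0)}=\{0\}$ is good we have $\xi^y_0=0$; inductively, since every $I^y_{(k,0)}$ is good, property \eqref{e:good_single_bad} guarantees that at most one of its $L$ sub-intervals at scale $k-1$ is bad, and the other (at least $L-1 \geq 2^{\expfractal}$) ones are good. Thus we may deterministically select, for every $k \geq 0$, a $k$-grouped $k$-fractal set $S^h_k \subseteq I^y_{(k,0)}$ by iterated selection, and analogously build $S^v_k \subseteq I^x_{(k,0)}$ from $\xi^x$. Now fix $k \geq 0$ and consider the rectangles $R^h_k := I^x_{(k+1,0)} \times I^y_{(k,0)}$ and $R^v_k := I^x_{(k,0)} \times I^y_{(k+1,0)}$. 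Because $I^x_{(k+1,0)}$ is good at scale $k+1$, property \eqref{e:good_single_bad} makes it a good $k$-block of length $L-1$, so the horizontal corridor spanning $R^h_k$ at height $j=0$ is a good $k$-corridor (and analogously for the vertical one in $R^v_k$). Applying Lemma~\ref{l:corridor} with starting set $S^h_k$ (resp.\ $S^v_k$) and a union bound gives
\begin{equation*}
\mathbb{P}^{\xi^x,\xi^y}_p\big(R^h_k \text{ or } R^v_k \text{ fails to be well-crossed}\big) \leq 2L\,\max\{u_k(p), v_k(p)\}.
\end{equation*}
The summability hypothesis and the first Borel--Cantelli lemma then give that, almost surely, there exists a random $K<\infty$ such that for every $k \geq K$ both rectangles are well-crossed.

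To glue these crossings, note that the open path traversing $R^h_k$ horizontally, restricted to the sub-box $R_{(k,0,0)} = I^x_{(k,0)} \times I^y_{(k,0)}$, is a left-to-right crossing of $R_{(k,0,0)}$; similarly the vertical crossing of $R^v_k$ restricts to a bottom-to-top crossing of $R_{(k,0,0)}$. By standard two-dimensional planar duality these two crossings must share a vertex, and hence belong to a common open cluster $\mathcal{K}_k$. The same duality argument, applied this time inside $R^v_k$, shows that the horizontal crossing of $R^h_{k+1}$ (restricted to $R^v_k$) meets the vertical crossing of $R^v_k$; hence $\mathcal{K}_k \subseteq \mathcal{K}_{k+1}$, and $\mathcal{K}_\infty := \bigcup_{k \geq K} \mathcal{K}_k$ is an infinite open cluster with probability one.

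Having established the existence of an infinite cluster with probability one, countable subadditivity produces a deterministic vertex $v_0\in \mathbb{Z}^2$ with $\mathbb{P}^{\xi^x,\xi^y}_p(v_0\leftrightarrow\infty)>0$; finite energy gives that a fixed finite path from $o$ to $v_0$ is open with positive probability, and the FKG inequality (both events are increasing) yields $\mathbb{P}^{\xi^x,\xi^y}_p(o\leftrightarrow\infty)>0$, as required. The main difficulty is the planar-duality gluing across scales, which requires the scale-$k$ and scale-$(k+1)$ crossings to overlap inside a common rectangle (here $R^v_k$) in compatible left-right/bottom-top orientations; the ``L-shaped'' neighbourhood $R^h_k \cup R^v_k$ at each scale is chosen precisely to enforce this inductive compatibility.
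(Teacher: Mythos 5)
Your proof is correct and rests on the same geometric gluing mechanism as the paper's (planar topology forces the scale-$k$ horizontal and vertical corridor crossings to intersect each other and the crossings at scale $k+1$), and it applies Lemma~\ref{l:corridor} to the same corridors $I^x_{(k+1,0)}\times I^y_{(k,0)}$ and $I^x_{(k,0)}\times I^y_{(k+1,0)}$. Where you differ from the paper is in the final probabilistic step. The paper simply applies FKG to the intersection of the increasing events ``segment $\{0\}\times I^y_{(k_0,0)}$ is fully open'' and ``the corridor at scale $k$ is well-crossed'' for all $k\geq k_0$ (after choosing $k_0$ so that the tail of $\sum_k \max\{u_k,v_k\}$ is below $(4L)^{-1}$), which directly bounds $\mathbb{P}^{\xi^x,\xi^y}_p(o\leftrightarrow\infty)$ from below by a convergent product. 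You instead invoke Borel--Cantelli to obtain a.s.\ existence of an infinite cluster at all large enough scales, and then route back to the origin via countable subadditivity, insertion tolerance, and FKG. Both arguments are valid, but the FKG-only route is more economical: it avoids the detour through almost-sure existence and the extra insertion-tolerance step, and it produces an explicit event of positive probability forcing $o\leftrightarrow\infty$. One small terminological note: what you call ``planar duality'' is really the topological fact (Jordan-curve / planarity of $\mathbb{Z}^2$) that a left-right and a top-bottom crossing of the same rectangle must share a vertex; no appeal to the dual lattice is needed.
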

The proof of Lemma \ref{l:build_cluster} follows from a Borel-Cantelli argument.

\begin{proof}[Proof of Lemma \ref{l:build_cluster}]
Since we are assuming that the sequence is summable we can fix a positive integer $k_0$ such that $\sum_{k = k_0} \max \{ u_k(p), v_k(p) \} < (4L)^{-1}$.

Fix a realization of the sequences $\xi^x, \xi^y$ for which \eqref{e:only_good_intervals} holds.
We have that $o$ belongs to an infinite connected component as soon as
\begin{display}
  all the edges along $\{0\}\times I^y_{( k_0, 0)}$ are open, and for every $k \geq k_0$,\\
  both the corridor $I^x_{ (k + 1, 0) }\times I^y_{ (k, 0) }$ is traversed horizontally\\
  and the corridor  $I^x_{ (k, 0) } \times I^y_{ (k + 1, 0)}$ is traversed vertically.
\end{display}
Therefore, we can use Lemma \ref{l:corridor} and the FKG inequality to obtain
\begin{equation}
  \begin{split}
    \mathbb{P}^{ \xi^x, \xi^y }_p  \big( o \leftrightarrow \infty \big)
    &\geq p^{L_{k_0}}\prod_{k = k_0}^{\infty} \big( 1 - \scalezero\max\{u_k(p), v_k(p)\}\big)^2 \\
    & \geq p^{L_{k_0}} \Big(1 - 2L \sum_{k = k_0}^{\infty} \max \{ u_k(p), v_k(p) \} \Big) \geq \frac{p^{L_{k_0}}}{2} > 0,
  \end{split}
\end{equation}
finishing the proof of the lemma.
\end{proof}

We finish the present section showing how the proof of our main result follows from Lemmas~\ref{l:build_cluster}, \ref{l:environment} and \ref{l:summability}.
\begin{proof}[Proof of Theorem \ref{t:main}]
Since $\rho \leq L^{-16}$, Lemma \ref{l:environment} together with a standard Borel-Cantelli argument allows us to conclude that the random environment $\xi^x, \xi^y$ is such that
\begin{equation}
\label{e:only_good_intervals_2}
\text{for every $ k \geq 0$, the intervals $ I_{ (k, 0) }^x$ and $I_{(k,0)}^y$ are good}
\end{equation}
with positive probability.
Fix such $\xi^x$ and $\xi^y$.

Since $p > (1-L^{-10})^{1/4}$, Lemma \ref{l:summability} gives that \eqref{e:summability} holds, from where we conclude that $\max\{u_k(p), v_k(p)\}$ is summable.
Hence we can apply Lemma~\ref{l:build_cluster} in order to conclude that there exists an infinite cluster $P^{\xi^x, \xi^y}_p$-a.s.
Hence, $\mathbb{P}\big(\text{$o$ is connected to infinity}\big) > 0$.
\end{proof}

\section{Absence of phase transition}
\label{s:sharpness}

Up until now we have assumed that the random variables in both sequences $\xi^x$ and $\xi^y$ are geometrically distributed as in \eqref{e:distr_xi}, hence they have exponential tail decay.
The main goal in this section is to prove Theorem \ref{t:sharpness} which states that this tail assumption cannot be dropped.

\begin{proof}[Proof of Theorem \ref{t:sharpness}]
The first step in the proof is to choose a large width for a column that would make it very hard to be crossed.
More precisely, for every $p < 1$, we pick $k = k(p) \in \mathbb{Z}_+$ such that $p + p^k < 1$.
The intuition behind this choice is that: on the event that $\xi^x_j = k$ for every $j$, even if we had $\xi^y_j = 1$ for every $j$, the resulting independent percolation process would still be subcritical (see \cite[Theorem 11.115]{Gri99}, page 332).
By standard results on the exponential decay of connectivity on the subcritical phase (see e.g.\ \cite{duminil2016new}) we obtain a constant $\gamma = \gamma(p) > 0$ such that
  \begin{eqnarray}
    \label{e:exp_decay_anisotropy_lattice}
    \mathbb{P}_p \big( o \leftrightarrow \partial B(o, m) \text{ on the lattice $(k \mathbb{Z}) \times \mathbb{Z}$}\big)\leq e^{- \gamma m},
  \end{eqnarray}
  for every $m \geq 1$.

Since we are assuming in \ref{l:no_transition:1}) that $\xi^x_0$ does not have compact support, if we denote $\alpha = \alpha(p) :=\mathbb{P}(\xi_0^x \geq k)$, then we have $\alpha > 0$.
Denote also $\beta = \beta(p):=\frac{1}{2\log(1/\alpha)}$ and fix $\delta<\gamma \beta$.

Back to the random lattice, we consider the nested increasing sequence of rectangles
  \[
    R_n=[-e^n,e^n]\times[-e^{\delta n}, e^{\delta n}].
  \]
  Write $l_1(n)$, $l_2(n)$, $l_3(n)$ and $l_4(n)$ for the right, top, left and bottom sides of $R_n$, respectively.
  Note that for every environment $\xi^x$ and $\xi^y$
  \begin{eqnarray}\label{e:cross_decomposition}
    \mathbb{P}^{\xi^x,\xi^y}_p(o\leftrightarrow\infty)\leq \liminf_{n\to\infty}\sum_{t=1}^4\mathbb{P}^{\xi^x,\xi^y}_p\big(o\leftrightarrow l_t(n) \textrm{ in }R_n\big).
  \end{eqnarray}

We now look within our random environment for a long stretch for which $\xi_j^x \geq k$.
Denoting
\begin{equation}
A_n:=\big\{\textrm{there exists $i \in [1, e^n)$ such that $\xi_j^x \geq k$ for every $j \in [i, i + \beta n)$} \big\},
\end{equation}
we have
\begin{equation}
  \begin{split}
    \mathbb{P}(A_n)
    & \geq \mathbb{P} \Big( \bigcup_{q = 1}^{\lfloor e^n/ \lceil \beta n \rceil \rfloor}\bigcap_{j = (q - 1) \lceil \beta n \rceil}^{ q \lceil \beta n \rceil - 1} \big\{ \xi_j^x \geq k \big\} \Big)
      =  \Big[ 1 - ( 1 - \alpha^{\lceil \beta n \rceil} )^{ \lfloor e^n / \lceil \beta n \rceil \rfloor} \Big]\\
    &  \geq  \Big[ 1 - \exp \big\{ -\alpha^{\beta n + 1} \big( \tfrac{e^n}{\beta n} - 1 \big) \big\} \Big]
      \geq \Big[ 1 - \exp \Big\{ - \; \frac{\alpha \exp(\tfrac{n}{2})}{\beta n} + 1 \Big\} \Big] \xrightarrow[n\to\infty] {}1.
    \end{split}
    \label{e:large_defect_horizontal}
  \end{equation}
On the event displayed in left-hand side on \eqref{e:large_defect_horizontal}, we obtain columns that are very difficult to cross, regardless of the realization of $\xi^y$.
  More precisely, on the event that $\xi_j^x\geq k$ for all $j\in[ i, i + \beta n)$,  \eqref{e:exp_decay_anisotropy_lattice} yields that for every $z \in \{ i \} \times \mathbb{Z}$,
  \begin{eqnarray}
    \label{e:exp_decay_anisotropy_column}
    \mathbb{P}^{\xi^x,\xi^y}_p\big(z \leftrightarrow \{i + \beta n\}\times\mathbb{Z} \textrm{ in }[ i, i + \beta n) \times \mathbb{Z}\big)\leq e^{- \gamma \beta n}.
  \end{eqnarray}

By \eqref{e:exp_decay_anisotropy_column}, on the event  $A_n$ for almost every environment $\xi^x$, $\xi^y$, one has
  \begin{eqnarray}\label{e:prob_zero_crossing_horizontal}
   \mathbb{P}^{\xi^x,\xi^y}_p(o\leftrightarrow l_1(n) \textrm{ in }R_n)+\mathbb{P}^{\xi^x,\xi^y}_p(o\leftrightarrow l_3(n) \textrm{ in }R_n)
    \leq 2\cdot(2 e ^{\delta n}+1)e^{-\gamma\beta n} \xrightarrow[n\to\infty]{}0.
  \end{eqnarray}

In order to bound the other two terms appearing in the sum in \eqref{e:cross_decomposition} we will regard the vertical environment $\xi^y$ and show that there is a subsequence of rectangles $R_{n}$ containing large enough vertical defects.

For that end, let us fix $a > - \big(\log p\big)^{-1}$ and define
\begin{equation}
  B_n= \big\{ \max_{1 \leq i \leq e^{\delta n} }\{ \xi_i^y \}> an \big\}.
\end{equation}
We have
  \begin{equation}
    \label{e:limsup_Bn}
    \begin{split}
      \limsup_n \mathbb{P}\big(B_n\big)
      & = \limsup_n \Big[  1 - \big( 1 - \mathbb{P}( \xi_0^y >  an ) \big)^{ \lfloor e^{\delta n} \rfloor }\Big] \\
      & \geq \limsup_n \Big[ 1- \exp\big(-e^{\delta n} \mathbb{P}( \xi_0^y >  an ) \big) \Big] = 1,
    \end{split}
  \end{equation}
  where the above equality follows from the second Hypothesis~\ref{l:no_transition:2} in the statement of theorem.

  Set $j_{\max}:= \text{argmax}\{\xi^y_i \colon 1\leq j \leq e^{\delta n}\}$.
 On the event $B_n$, for almost every environment $\xi^x$, $\xi^y$, we have that
  \begin{align}
    \label{e:prob_zero_crossing_vertical}
    & \; \mathbb{P}^{\xi^x,\xi^y}_p(o\leftrightarrow l_2(n) \textrm{ in }R_n)+\mathbb{P}^{\xi^x,\xi^y}_p(o\leftrightarrow l_4(n) \textrm{ in }R_n) \nonumber \\
                          & \leq 2\, \mathbb{P}^{\xi^x,\xi^y}_p\bigg(\bigcup_{-e^{-n}\leq i \leq e^n}\{\textrm{the edge between $(i ,j_{\max})$ and $(i,j_{\max}+1)$ is open}\}\bigg) \nonumber \\
                          & \leq 2 \cdot ( 2 e^{ n } + 1) p^{ an } \xrightarrow[n\to\infty]{} 0.
  \end{align}
Where the limit vanishes thanks to the definition of $a$.
Plugging into \eqref{e:cross_decomposition} the estimates \eqref{e:prob_zero_crossing_horizontal}, \eqref{e:limsup_Bn} and \eqref{e:prob_zero_crossing_vertical}, we obtain the desired result.
\end{proof}

\section{Oriented percolation in a random environment}
\label{s:ksv}

In this section we study two other models, in order to illustrate how our techniques can be adapted to tackle different problems.
These models can be regarded as oriented versions of the one considered in Theorem~\ref{t:main} and one of them has already been studied in \cite{Kesten22} and we refer to it from now on as the KSV Model.

The oriented square lattice is the graph whose vertex set is given by
\begin{equation}
  \mathbb{V} = \big\{ (i, j) \in \mathbb{Z}^2;\, i + j \textrm{ is even} \big\}
\end{equation}
and the oriented edges are
\begin{equation}
  \mathbb{E} = \Big\{ \big( (i_1, j_1), (i_2, j_2) \big) \in \mathbb{V}^2 ;
  \, i_2 = i_1 + 1 \textrm{ and } |j_2 - j_1| = 1 \Big\}.
\end{equation}
Note that this graph is isomorphic to the usual square lattice with edges oriented to the right and upwards (the isometry is given by a rotation of $\pi/4$, followed by a dilation of $\sqrt{2}$).

We now introduce the two models that we are able to deal with our techniques.

\paragraph{KSV}

We start be re-introducing a process studied in \cite{Kesten22}.
Let $(\eta_i)_{i \in \mathbb{Z}}$ be a sequence of i.i.d. Bernoulli random variables with density $\rho \in (0, 1)$.
Fixed this sequence, we now chose two parameters $p_g, p_b \in (0, 1)$ (respectively representing good an bad columns) and we independently declare each vertex $(i, j) \in \mathbb{V}$ open with the following probabilities:
\begin{equation}
  \label{e:p_good_p_bad}
  \mathbb{P} \big[ (i, j) \text{ is open} \big] =
  \begin{cases}
    p_g, \qquad \text{if $\eta_i = 0$,}\\
    p_b, \qquad \text{if $\eta_i = 1$.}\\
  \end{cases}
\end{equation}

The main result of \cite{Kesten22} is given by the following
\begin{theorem}[KSV]
  \label{t:ksv}
  For any $p_g > p_c$ ($p_c$ stands for oriented, site percolation) and any $p_b > 0$, there exists $\bar{\rho} = \bar{\rho}(p_g, p_b) > 0$ such that
  \begin{equation}
    \label{e:ksv}
    \mathbb{P}_{p_g, p_b}^\eta \big[ 0 \to \infty \big] > 0,
  \end{equation}
  for almost every $\eta$ under the $Ber(\bar{\rho})$ i.i.d. distribution.
\end{theorem}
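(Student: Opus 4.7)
The strategy is to reduce Theorem~\ref{t:ksv} to Theorem~\ref{t:oriented} via a one-step block renormalization, in the spirit of Remark~b) following Theorem~\ref{t:main}. The renormalization amplifies the supercriticality hypothesis $p_g > p_c$: at a sufficiently coarse scale, good columns support the required crossing events with probability arbitrarily close to one (matching the hypotheses of Theorem~\ref{t:oriented}), while bad columns retain uniform positive crossing probability thanks to $p_b > 0$.

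Concretely, I would fix a large integer $N$ (to be chosen last) and partition $\mathbb{V}$ into $N \times N$ blocks $B_{(a,b)}$. For each block I would introduce a ``good crossing event'' $G_{(a,b)}$ involving suitable oriented crossings of $B_{(a,b)}$, designed so that concatenating the events $\{G_{(a,b)}\}$ along an infinite oriented open path in the renormalized lattice yields an infinite oriented open cluster in the KSV model. Two estimates are then required. First, for blocks $B_{(a,b)}$ whose horizontal extent lies entirely in good original columns, $G_{(a,b)}$ depends only on i.i.d.\ $\mathrm{Ber}(p_g)$ site variables, and since $p_g > p_c$ for oriented site percolation, the standard finite-size criterion (together with a Liggett--Schonmann--Stacey stochastic domination step, if needed to produce a $k$-dependent process dominating supercritical Bernoulli percolation) gives $\mathbb{P}[G_{(a,b)}] \to 1$ as $N \to \infty$. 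Second, for blocks touching at least one bad original column, the crude lower bound obtained by forcing every vertex inside $B_{(a,b)}$ to be open gives $\mathbb{P}[G_{(a,b)}] \geq \min(p_g, p_b)^{N^2} > 0$, a uniform positive constant.

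Now declare a column of the renormalized lattice ``bad'' if it meets any bad original column, so its probability is at most $N\rho$. The resulting process on the renormalized lattice then fits the hypotheses of Theorem~\ref{t:oriented}: good renormalized columns have density at least $1 - N\rho$, which is close to one by choosing $\bar\rho$ small; good renormalized blocks are open with probability close to one by choosing $N$ large; and bad renormalized blocks are open with a uniform positive probability. Theorem~\ref{t:oriented} then provides an infinite open cluster in the renormalized lattice, which by construction produces an infinite oriented open cluster in the KSV model, establishing~\eqref{e:ksv}.

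The main technical obstacle is the compatibility between the good-block event $G_{(a,b)}$ and the form required by Theorem~\ref{t:oriented}. The latter, based on the same fractal-set machinery developed in Sections~\ref{s:percolation}--\ref{s:contraction}, typically asks that large families of starting points on one face of a block connect to large families on the opposite face, while the finite-size criterion for supercritical oriented site percolation only naturally produces a single crossing. Bridging this gap -- either by iterating the block construction so that crossings at the first scale assemble into thick crossings at the second, or by exploiting uniqueness of the supercritical oriented cluster to build many disjoint crossings from a single one -- is the core of the argument; once it is done, the remainder of the proof reduces to bookkeeping the parameters $N, \bar\rho$ in the correct order.
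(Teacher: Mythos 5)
Your high-level strategy matches the paper's: reduce Theorem~\ref{t:ksv} to Theorem~\ref{t:oriented} by a one-step block renormalization, with good renormalized columns controlled by supercriticality and bad ones by a crude positive bound. You correctly flag the central technical obstacle, but you leave it unresolved, and the resolution in the paper is more delicate than ``bookkeeping the parameters.'' Three concrete points where your outline needs filling in.

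First, and most importantly, your crude bad-block probability $\min(p_g,p_b)^{N^2}$ (or $p_b^{\ell^2}$) is in general far below $(1-L^{-10})^{1/2}$, so you cannot identify a bad renormalized column with a value $\xi=1$ in Theorem~\ref{t:oriented} directly; the required crossing probability $p^{\xi+1}\geq(1-L^{-10})^{1/2}$ is much too large to be dominated by your bad-block event. The paper resolves this by first upgrading Theorem~\ref{t:oriented} to a scaled version~\eqref{e:oriented_M} with defects $M\xi$ (using that $M\cdot\Geo$ is stochastically below a geometric with smaller success probability), observing that truncating $\xi$ to $\{0,1\}$ is a monotone modification, and then choosing $M$ so that $(1-L^{-10})^{M/4}\leq p_b^{\ell^2}$, matching the bad-block cost to a scaled defect $\xi=1$. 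This matching of $M$, $\ell$, $\bar\rho$ (via~\eqref{e:prob_bad}) is the key missing idea in your outline.

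Second, your worry about ``thick crossings'' is exactly the right one; the paper addresses it by defining the block crossing event~\eqref{e:crossing_oriented} to require at least $\ell/10$ reached sites on the target interval, starting from \emph{any} seed of size at least $\ell/10$, and by invoking a finite-size criterion (Lemma~6.1 of~\cite{NTT19}) for the uniform bound~\eqref{e:dominate_oriented}. That criterion, combined with a Liggett--Schonmann--Stacey domination for the resulting $10$-dependent field, gives precisely the seed-propagation you need; neither iterating scales nor exploiting cluster uniqueness is required.

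Third, the reduction from arbitrary $p_g>p_c$ to $p_g$ close to one is not subsumed into the one-step argument: the paper proves only the intermediate statement~\eqref{e:ksv_p_large} and cites Section~8 of~\cite{Kesten22} for the additional renormalization that upgrades it to $p_g>p_c$. Your sketch could in principle absorb this step via the finite-size criterion, but then you must verify that the same criterion simultaneously supplies high-probability crossings from an arbitrary large seed rather than from a full face, which is what the infimum over seeds in~\eqref{e:dominate_oriented} encodes.
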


Below we introduce another oriented model that we consider in this section.

\paragraph{Oriented percolation with geometric defects}

Consider a single sequence $\xi=(\xi_i)_{i \in \mathbb{Z}}$  of  i.i.d.\ $\Geo(1-\rho)$ random variables, where $\rho \in [0, 1]$.
Fixed $\xi$ and $p \in [0, 1]$, we define a bond percolation on $\mathbb{V}$, where an edge $\{(i, j), (i + 1, j + 1)\}$ or $\{(i, j), (i + 1, j - 1)\}$  is declared open (independently of others) with probability $p^{\xi_i + 1}$ and closed otherwise.
Denoting by $P^{\xi}_p$ the law of this process, we will prove the following.

\begin{theorem}\label{t:oriented}
Let $L=10^{6}$. As long as $\rho\leq L^{-16}$ and $p\geq (1-L^{-10})^{1/4}$, we have
\begin{equation}P^{\xi}_p[ (0,0) \textrm{ is connected to infinity}]>0.
\end{equation}
for a.e. environment $\xi$.
\end{theorem}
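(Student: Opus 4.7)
The argument mirrors the proof of Theorem~\ref{t:main} but is significantly simplified because there is only one environmental sequence $\xi$ (no vertical disorder) and the oriented structure forbids ``backward'' propagation.

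\emph{Environment.} The one-dimensional renormalization of Section~\ref{s:environment} applies verbatim to $\xi$: under $\rho \leq L^{-16}$, Lemma~\ref{l:environment} together with Borel--Cantelli guarantees that the event ``every $I_{(k,0)}$ is good'' has positive probability. I would fix such an environment and work with the quenched measure $P_p^\xi$.

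\emph{Percolation quantities.} The remainder $\mathcal{R}(S,R)$ is redefined as the set of vertices in the right face of $R$ reachable via oriented open paths from $S$ in the left face. Since there is no $\xi^y$, the definition of $k$-fractal (Definition~\ref{d:fractal}) loses its ``$k$-good'' constraint and becomes a purely geometric hierarchical sparse subset of $\mathbb{Z}$. Accordingly, $u_k(p)$ and $v_k(p)$ are defined as in Definitions~\ref{d:u_k}--\ref{d:v_k}, with the supremum now running only over $\xi$ (and an oriented variant of $\mathcal{R}_k$).

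\emph{Recursive estimates.} The analogues of Lemmas~\ref{l:grouping} and~\ref{l:battle} would yield $u_{k+1},v_{k+1} \leq L^6 \max\{u_k,v_k\}^2$. Their proofs decompose a scale-$(k+1)$ column into $L$ scale-$k$ columns and combine traversal of good sub-blocks (the oriented analog of Lemma~\ref{l:corridor}), traversal of bad sub-blocks via concentration (the oriented analog of Lemma~\ref{l:algebra}), and a final recovery step. Traversal of good and bad blocks goes through with essentially no change, because for each individual column the oriented process has exactly the independence structure needed: disjoint horizontal slices $\mathbb{Z} \times \{j_1, j_2\}$ (modulo the parity constraint of $\mathbb{V}$) evolve independently when we demand that paths stay within their slice.

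\emph{Recovery in the oriented setting.} The main departure from the original proof is the replacement of the Recovery Lemma~\ref{l:recovery}, which relied on vertical corridors provided by Lemma~\ref{l:branch}. In the oriented model vertical propagation is forbidden. The key observation is that within a good $k$-block of length $\gtrsim L/2$, the diagonal ``cone'' emanating from a single open vertex already spans $\Theta(L_k)$ distinct vertical positions in the destination column. Thus a single surviving $k$-grouped $k$-fractal at the left face, when propagated through the block, already yields (with high probability) a spreading that can be partitioned into $\Omega(L)$ disjoint $k$-ordered $k$-fractal pieces. Formalizing this should use the independence of disjoint horizontal slices and a concentration argument along the lines of Lemma~\ref{l:algebra}, applied to show that enough of the $L/2$ independent ``rows'' are crossed to give an $(L-1)$-fold $k$-ordered family of $k$-fractal remainders.

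\emph{Base case and gluing.} The base case $\max\{u_0(p),v_0(p)\} \leq L^{-10}$ for $p \geq (1-L^{-10})^{1/4}$ follows from a direct edge-probability estimate as in the proof of Lemma~\ref{l:summability}, using that each oriented vertex has exactly two outgoing edges of probability $p^{\xi_i+1}$. Induction then gives \eqref{e:summability}. Finally, the Borel--Cantelli/FKG gluing of Lemma~\ref{l:build_cluster} carries over: successive horizontal oriented corridors $I_{(k+1,0)} \times I_{(k,0)}$ are crossed with high probability and chain together to produce an infinite oriented cluster from the origin with positive $P_p^\xi$-probability. A standard ergodicity argument upgrades this to almost every $\xi$.

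\emph{Main obstacle.} The crux is the oriented replacement for Lemma~\ref{l:recovery}. Rather than using one vertical crossing to link several $k$-grouped fractals (as in the bidirected model), one must exploit the branching of oriented percolation, showing that the diagonal cones of a few surviving start points already cover $(L-1)$ disjoint $k$-ordered $k$-fractals with probability $1 - O(L^5\max\{u_k,v_k\}^2)$. The combinatorial bookkeeping of which cones reach which vertical positions is somewhat delicate, but follows the same template as the original recovery, so no new large-deviation tool should be required.
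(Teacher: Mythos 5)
Your high-level scaffolding (environment renormalization unchanged, recursive bounds on $u_k$ and $v_k$, recovery lemma as the main obstacle, base case from direct edge-probability estimates, FKG gluing) is the right one and matches the paper, and you have correctly isolated the crux: Lemma~\ref{l:recovery} cannot be used as-is because oriented percolation cannot go ``back'' vertically. However, your proposed replacement --- that a single surviving $k$-fractal, propagated through $L/2$ good columns, branches into a cone that covers $(L-1)$ disjoint $k$-ordered $k$-fractals --- has a genuine gap and misses the paper's key geometric device. The paper does \emph{not} abandon vertical corridors; it makes them work by switching to \emph{horizontally elongated boxes}, setting $L^x_k = L^k$ and $L^y_k = (L/10)^k$, so that at every scale the horizontal extent vastly exceeds the vertical one. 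This allows zig-zag oriented paths to play the role of vertical corridors: they can change height by $\Theta(L^y_k)$ over a horizontal stretch much shorter than $L^x_k$, so four disjoint zig-zag ``vertical'' corridors can coexist inside one box and each can intersect all of the $L^y$ horizontal corridors (see the construction behind the adapted Lemma~\ref{l:branch} and Figure~\ref{f:zig_zag}). The paper also replaces the ``$k$-good'' constraint in the definition of $k$-fractal by a ``$k$-admissible'' parity constraint (handling the fact that $\mathbb{V}$ is the even sublattice), and introduces \emph{parallel corridors} and the associated remainder $\mathcal{R}^\Sigma_k$ precisely to keep the independence bookkeeping clean. None of this appears in your sketch.

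The specific trouble with the cone argument: the remainder estimate needed in the recovery step must have failure probability $O\!\big((\max\{u_k,v_k\})^2\big)$, and in the paper this square comes from having two independent ``surviving'' $k$-grouped fractals (obtained from $S_1,S_2,S_3$) and then requiring two corridor crossings to fail. A single cone emanating from one fractal cannot produce this square. Moreover, you appeal simultaneously to (i) branching of the cone to spread over $\Theta(L_k)$ vertical positions, and (ii) ``independence of disjoint horizontal slices'' plus Lemma~\ref{l:algebra}-style concentration. These two ideas are in tension: spreading vertically is exactly what leaves the horizontal-slice structure, so the independence you want to invoke is not available inside a single cone. In oriented percolation the open cluster of a vertex does not fill its cone with high probability, and tracking a remainder that grows from $|S| = 2^{10k}$ to $\Omega(L)\cdot 2^{10k}$ requires the controlled branching that the horizontal-plus-zig-zag-vertical corridor structure provides. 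Without the elongated boxes, a zig-zag spanning the full height of a square $L_k \times L_k$ box would itself be a single diagonal, unable to intersect all horizontal corridors, so there is no obvious way to recover the lemma. You would need to re-introduce the anisotropic scaling $L^x_k \gg L^y_k$ and the parallel-corridor formalism to make the recursion close.
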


In Subsection~\ref{ss:proof_oriented}, we prove Theorem~\ref{t:oriented}.
But before, let us show that Theorem~\ref{t:ksv} can be obtained from Theorem~\ref{t:oriented} through a simple one-step renormalization argument.

\subsection{Proof of Theorem~\ref{t:ksv} assuming Theorem~\ref{t:oriented}}
\label{ss:oriented_gives_ksv}

In this subsection we assume the validity of Theorem~\ref{t:oriented} (which will be proved in Subsection~\ref{ss:proof_oriented}) in order to provide a proof of Theorem~\ref{t:ksv}.
This proof will follow a one-step renormalization argument and it does not contain any new ideas, but it is included in detail below for the sake of completeness.

\begin{proof}[Proof of Theorem~\ref{t:ksv}]
  Instead of proving the full result, we will only establish that
  \begin{display}
    \label{e:ksv_p_large}
    there exists $p \in (0, 1)$ such that for any $p_b > 0$, there exists $\rho(p_b) > 0$ such that
    $\mathbb{P}_{p, p_b}^\eta \big[ 0 \to \infty \big] > 0,$
    for almost every $\eta$ under the $Ber(\bar{\rho})$ i.i.d. distribution.
  \end{display}
  The fact that the above implies the main theorem has already been proved in Section~8 of \cite{Kesten22} and involves yet another one-step renormalization argument.

  We can make another reduction step, by observing that Theorem~\ref{t:oriented} implies the following
  \begin{display}
    \label{e:oriented_M}
    for any $M \geq 1$, there exists $\rho = \rho(M) > 0$ such that for $p = (1 - L^{-10})^{1/4}$ we have $P_p^{M \xi}[(0, 0) \text{ is connected to infinity}] > 0$,
  \end{display}
  where one can see by the super-script in $P$, that the geometric random variables $\xi$ have been multiplied by $M$.
  To see why the above is true, it is enough to observe that $M$ times a geometric random variable is stochastically dominated by another geometric random variable with smaller parameter.
  Our objective now is to prove that \eqref{e:oriented_M} implies \eqref{e:ksv_p_large}.
  But before that we state two classical results.

  Throughout our argument we will arrive to a $10$-dependent measure on $\{0, 1\}^\mathbb{Z}$.
  To deal with this dependence, we will use a classical result from Liggett, Schonmann, and Stacey.
  More precisely, the main result of \cite{10.1214/aop/1024404279} implies that there exists a $q^*$, such that:
  \begin{display}
    \label{e:10-dep}
    any $10$-dependent measure on $\{0, 1\}^\mathbb{Z}$ whose marginal probabilities are at least $q^*$ stochastically dominates an i.i.d. measure with $p = (1 - L^{-10})^{1/4}$ as in \eqref{e:oriented_M}.
  \end{display}

  In our one-step renormalization, we will introduce a connection event in a $5\ell$ by $4\ell$ box, which plays the role of an edge crossing.
  Using some classical arguments of oriented percolation (in our case it suffices to use the statement of Lemma~6.1 in \cite{NTT19}) we conclude that there exists $p \in (0, 1)$ (called $\gamma$ in \cite{NTT19}) close enough to one and $\ell$ large enough such that
  \begin{equation}
    \label{e:dominate_oriented}
    \inf_{\substack{S \subseteq \{0\} \times [0, \ell)\\|S| \geq \ell/10}}
    \mathbb{P}_p \Big[ S \text{ is connected within $[0, 5\ell] \times [-\ell, 3 \ell)$ to $\ell/10$ sites in $\{5 \ell\} \times [\ell, 2 \ell)$}  \Big] \geq q^*,
  \end{equation}
  where $e = \big( (0, 0), (1, 1) \big)$.

  We are now in position to chose the constants in \eqref{e:oriented_M} and \eqref{e:ksv_p_large} for the rest of the proof to work.
  Take $M \geq 1$ such that
  \begin{equation}
    \label{e:tube_open}
    (1 - L^{-10})^{M/4} \leq p_b^{\ell^2}
  \end{equation}
  and $\bar{\rho}$ (see \eqref{e:ksv_p_large}) such that
  \begin{equation}
    \label{e:prob_bad}
    \Big( 1 - (1 - \bar{\rho})^{5 \ell} \Big) \leq \rho(M).
  \end{equation}
  We are finally in position to give the one-step renormalization argument.

  Before we present the proof, let us give an informal overview of its main steps.
  We start by introducing, for each site $(i, j) \in \mathbb{V}$, the interval $I_{(i, j)} = \{5 i \ell\} \times [j \ell, (j + 1) \ell)$.
  These will play the role of renormalized sites.

  Analogously, we associate for every edge $\big( (i, j), (i + 1, j') \big) \in \mathbb{E}$, the box
  \begin{equation}
    \label{e:B_w}
    B_w := \mathbb{V} \cap \Big[5 i \ell, 5 (i + 1) \ell \Big) \times \Big[ \big(\min\{j, j'\} - 1 \big) \ell , \big( \max \{j, j'\} + 2 \big) \ell \Big),
  \end{equation}
  which is isomorphic to the box appearing in \eqref{e:dominate_oriented}.
  The proof will follow an exploration of $\mathbb{V}$ from left to right.
  In other words, we will proceed over $\mathbb{V}$ through the layers $\mathbb{V}_i = (\{i\} \times \mathbb{Z}) \cap \mathbb{V}$ and we also define $\mathbb{E}_i$ as the set of edges in $\mathbb{E}$ connecting $\mathbb{V}_i$ to $\mathbb{V}_{i + 1}$.

  This exploration will proceed inductively over $i$ defining the set $\mathcal{G}(i) \subseteq \mathbb{V}_i$, which gives the indices of intervals $I_z$, for $z \in \mathbb{V}_i$ that have been reached during our exploration.
  Also, for each such a $z \in \mathcal{G}(i)$ we will specify the ``seed'' $S_z \subseteq I_z$, which corresponds to the points that have been reached in $I_z$.

  More precisely, let $\mathcal{G}(0) = \{0\}$ (``only the origin is accessible in the first layer'') and let $S_{(0, 0)} = I_{(0, 0)}$ (``the whole interval is the seed'').
  Suppose now that for $i \geq 0$ we have defined $\mathcal{G}(i) \subseteq \mathbb{V}_i$ and for every $z \in \mathcal{G}(i)$ we have defined the seed $S_z \subseteq I_z$ with $|S_z| \geq \ell/10$.
  Then we define, for every $(z, w) \in \mathbb{E}_i$ the crossing events
  \begin{equation}
    \label{e:crossing_oriented}
    \mathcal{F}_{(z, w)} = \bigg[
    \begin{gathered}
      \text{there exists at least $\ell/10$ points in $I_w$}\\
      \text{that can be reached from $S_z$ within $B_{(z, w)}$}
    \end{gathered}
    \bigg],
  \end{equation}
  and if the above event occurs, we say that $w \in \mathcal{G}(i + 1)$ and we define $S_w$ as an arbitrary choice of such $\ell/10$ points.

  Our theorem now follows directly if we manage to prove the following claim
  \begin{display}
    \label{e:reduction_oriented}
    the points $(i, j) \in \mathbb{V}$ such that $j \in \mathcal{G}(i)$ dominate the percolation process in \eqref{e:oriented_M}.
  \end{display}
  To prove \eqref{e:reduction_oriented}, we consider two cases:

  {\bf Case 1 -} If $i$ is a good column then the events $(\mathcal{F}_{(z, w)})_{(z, w) \in \mathbb{E}_i}$ are $10$-dependent.
  Moreover, the probability of each such event is at least $q^*$ by \eqref{e:dominate_oriented}.
  Therefore the distribution of the vector $(\mathcal{F}_{(z, w)})_{(z, w) \in \mathbb{E}_i}$ dominates a product of independent Bernoulli variables with parameter $(1 - L^{-10})^{1/4}$, by \eqref{e:10-dep}.
  Note that the above probability corresponds to crossing an edge with $\xi = 0$ in \eqref{e:oriented_M}.

  {\bf Case 2 -} If $i$ is bad, then we lower bound the probability of $\mathcal{F}_{(z, w)}$, for $(z, w) \in \mathbb{E}_i$, by the probability that one point in $S_z$ infects $\ell/10$ points in $I_w$.
  To guarantee independence, we should also make sure that the microscopic edges used to construct these events are disjoint as we vary the pair $(z, w) \in \mathbb{E}_i$.
  This can be done with a simple construction as that of Figure~\ref{f:crossing_bad_ksv}.
  Since one needs to open at most $\ell^2$ edges in the above construction, our lower bound becomes $p_b^{\ell^2}$, which by \eqref{e:tube_open} is at least $(1 - L^{-10})^{M/4}$.
  Note that the latter probability corresponds to a value $\xi = 1$ in \eqref{e:oriented_M}.

  \begin{figure}
    \centering
    \begin{tikzpicture}[scale=.5]
      \draw (0,0) rectangle (20,4);
      % down
      \foreach \i in {0,1,...,10}
      \draw (\i * .2, 3.2 - \i * .2)--(\i * .2 + .2 ,3.2 - \i * .2 -.2);
      % zig-zag
      \foreach \i in {6,7,...,49}
      \draw (\i * .4, 1.2)--(\i * .4 + .2 ,1);
      \foreach \i in {5,6,...,49}
      \draw (\i * .4 +.2, 1)--(\i * .4 + .4 ,1.2);
      % end-tree
      \foreach \i in {0,1,2}
      \draw (19.8-\i*.4,1)--(20,0.8-\i*.4);
      \foreach \i in {0,1}
      \draw (19.6-\i*.4,1.2)--(20, 1.6 + \i* .4);
      \draw[blue, ultra thick] (0, 2) -- (0, 4);
      \draw[blue, ultra thick] (20, 2) -- (20, 0);
    \end{tikzpicture}
    \caption{To cross a bad interval, one can build a connection by opening a deterministic set with at most $\ell^2$ edges (at a cost of $p_b^{\ell^2}$).
      For distinct edges $(z, w) \in \mathbb{E}_i$, the edges used in this construction are disjoint, thus the corresponding events are independent.}
    \label{f:crossing_bad_ksv}
  \end{figure}
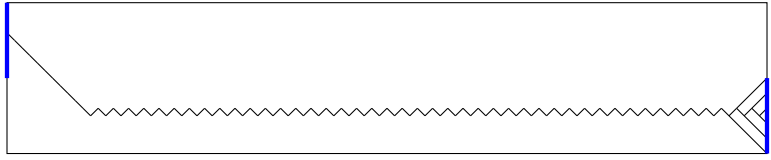

  To finish the proof we recall that the model from \eqref{e:oriented_M} is super-critical.
  Observe also that it still percolates if we truncate the defect variables $\xi$ in such a way that it only takes values $0$ and $1$ (after all, this is a monotone modification which only removes large defects).
  By \eqref{e:prob_bad}, the probability that $\xi = 1$ is at least the probability to find a bad block in the KSV model.
  Finally, the case analysis above shows that the exploration of the KSV model dominates the corresponding exploration for the model in \eqref{e:oriented_M} (both when a column is good and $\xi = 0$ and when the column is bad and $\xi = 1$).
  This shows \eqref{e:reduction_oriented}, finishing the proof of the theorem.
\end{proof}

\subsection{Proof of Theorem~\ref{t:oriented}}
\label{ss:proof_oriented}

Theorem \ref{t:oriented} will be proved in a similar way as Theorem \ref{t:main}.
Indeed the control of the random environment is done exactly in the same manner, meaning that we will use Lemma~\ref{l:environment} as is.
For the control on the percolation process, there are a few differences due to the oriented nature of the model considered in Theorem~\ref{t:oriented}; for example it is no longer possible to find vertical crossings as those obtained in Lemma~\ref{l:branch}.

In order to adapt our proof to the oriented setting, we will start by replacing the square boxes introduced in \eqref{e:boxes} by rectangles that are horizontally. elongated.
For this, we will write as before, $L=10^6$ and consider the following scale progression:
\begin{eqnarray*}
 L_k^x=L^k \qquad \textrm{ and } \qquad L_k^y=(L/10)^k,
\end{eqnarray*}
for all $k\geq 0$.
Observe that $L^y_k = (L^x_k)^{5/6}$ for every $k$.

For $m=(k,i)\in M_k$ let $I^x_m=[ i L_k^x, (i+1) L_k^x)$ and $I^y_m=[ i L_k^y, (i+1) L_k^y)$ denote the horizontal and vertical $k$-intervals respectively.
Notice that the horizontal intervals at scale $k$ are much longer than their vertical counterparts.

Next we define the boxes at scale $k$, which will be indexed by edges of the microscopic lattice.
More precisely, for $k \geq 0$, we define $W_k = \{k\} \times \mathbb{E}$ and for $\bar{e} = (k, e) \in W_k$, one writes $e = \big( (i, j), (i + 1, j') \big)$ with $|j - j'| = 1$ to define the boxes
\begin{eqnarray}
  \label{e:B_bar_e}
  B_{\bar{e}} := \mathbb{V} \cap \Big[i L^x_k , (i + 1) L^x_k  \Big) \times \Big[\min\{j, j'\} L^y_k , \big( \max \{j, j'\} + 1 \big) L^y_k \Big) \\
  = \mathbb{V} \cap I^1_{(k,i)} \times  ( I^y_{(k, \min\{j, j'\})} \cup I^y_{(k, \min\{j, j'\}+1)} ),
\end{eqnarray}
see Figure~\ref{f:boxes_and_admissible}.
%Note that for $w \in W_k$, we have $B_{\bar{e}} = \mathbb{L} \cap (I_{(k, i)} \times I)$, for some $I_{(k, i)}$ as in \eqref{e:def_Ik} and some interval $I \subseteq \mathbb{Z}$.
Note that for each $k$, the boxes $B_{(k,e)}$ and $B_{(k,e')}$ intersect if and only if the edges $e$ and $e'$ start or end at the same vertex.

Although the oriented model that we consider here does not feature a vertical random environment as before, we still need to restrict the location of the sets $S$ as we did before.
This is done through the notion of $k$-admissible set, which is introduced next.

%In this case that $e$ and $e'$ start at the same vertex, the left side of the intersecting boxes will be called {\it $k$-admissible} and in a way will play the analogue role of good vertical intervals of the previous sections. Strictly, we have the following definition

\begin{definition}\label{d:admissible}
  For $k \geq 0$ we say that a vertex in $\{i L_k\} \times \mathbb{Z}$ is $k$-admissible if $i \in I^y_{(k,j)}$ and $i + j$ is even.
  A set $S\subseteq \mathbb{Z}$ is said to be $k$-admissible if all its vertices are $k$-admissible.
\end{definition}

Intuitively speaking, the notion of $k$-admissible generalizes to any scale $k$ the restriction $\{ i + j \text{ is even} \}$ imposed on the lattice $\mathbb{V}$, see also Figure~\ref{f:boxes_and_admissible}.

We now introduce the notion of $k$-fractal sets, which is identical to the previous Definition \ref{d:fractal} but replacing $k$-good with $k$-admissible.

\begin{definition}[$k$-fractal sets]
  \label{d:fractal2}
  Let $\xi$ be given.
  A singleton $S = \{ x \}$ is said $0$-fractal if it is $0$-admissible, or in other words if $x \in \mathbb{V}$.
  For $k \geq 1$, a subset $S$ of $\{i L_k\} \times \mathbb{Z}$ is a $k$-fractal if $S = S_1 \cup \dots \cup S_{2^\expfractal}$, where
\begin{itemize}
\item $S$ is $k$-admissible;
\item the family $\{S_0, \dots, S_{2^\expfractal}\}$ is $(k-1)$-ordered and
\item each $S_i$ is a $(k - 1)$-fractal.
 \end{itemize}
\end{definition}

\color{black}

\begin{figure}
  \centering
  \begin{tikzpicture}[scale=.5]
    \filldraw[fill=red,draw=red, opacity=.3](6,2) rectangle(8,6);
    \draw[->] (6.2,3)--(7.8,5);
   %pequeno
    \filldraw[fill=red,draw=red,opacity=.3](.6,3) rectangle(.8,3.4);
    \filldraw[fill=red,draw=red,opacity=.3](8,4) rectangle(10,8);
    \draw[->] (8.2,5)--(9.8,7);
    \filldraw[fill=red,draw=red,opacity=.3](8,2) rectangle(10,6);
    \draw[->] (8.2,5)--(9.8,3);
    %\filldraw[fill=blue,draw=blue,opacity=0.3](4,4) rectangle(6,8);
    %\draw[->] (4.2,5)--(5.8,7);
    \foreach \i in {0,4,...,15}
    \draw[black, ultra thick, opacity=.5] (\i,0)--++(0,2) (\i,4)--++(0,2);
    \foreach \i in {2,6,...,15}
    \draw[black, ultra thick, opacity=.5] (\i,2)--++(0,2) (\i,6)--++(0,2);
    \foreach \i in {0,.4,...,2}
    \foreach \j in {0,0.4,...,7.9}
    \draw[black, thin] (\i,\j)--++(0,.2);
    \foreach \i in {.2,.6,...,2}
    \foreach \j in {.2,0.6,...,8}
    \draw[black, thin] (\i,\j)--++(0,.2);
  \end{tikzpicture}
  \caption{\label{f:boxes_and_admissible} The gray segments represent the admissible intervals. Note that admissible and non-admissible intervals appear alternately.
  In red, the boxes corresponding to edges are highlighted.}
\end{figure}
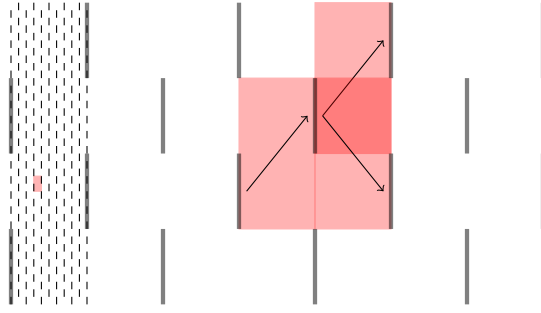

Again, observe that $k$-fractal sets have cardinality $2^{\expfractal k}$ and, in analogy to \eqref{e:grouped},
\begin{display}
  \label{e:grouped2}
  a $k$-fractal $S$ is said to be $k$-grouped if it is contained in $B_{\bar{e}}$ for some $w \in W_k$.
\end{display}
Note also that the definition of $Z_k(S)$ is kept the same.

\medskip

To define a corridor, we consider an oriented path $\gamma: \{0, \dots, n\} \to \mathbb{V}$ (by oriented, we mean simply that $(\gamma(k), \gamma(k + 1)) \in \mathbb{E}$ for every $k = 0, \dots, n - 1$) and we define the length of the above path to be $n$.

\begin{definition}
  \label{d:corridor_2}
  For a path $\gamma$ with length $n \leq \scalezero$, we define the corridor at scale $k$ over $\gamma$ as
  \begin{equation}
    C_k^\gamma := \bigcup_{i = 0}^{n - 1} B_{(k, (\gamma(i), \gamma(i + 1)))}.
  \end{equation}
\end{definition}

\paragraph{Adaptation of the proof of Theorem~\ref{t:main}}

This subsection details all the changes that need to be made for the proof of Theorem~\ref{t:main} to work for Theorem~\ref{t:oriented}.
We start by observing that both the statement and the proof of Lemma~\ref{l:corridor} work perfectly well for our oriented percolation model.
Also, the reminder $\mathcal{R}(S, R)$ can be defined just like in \eqref{e:remainder_box}.

However, we will need more care to introduce $\mathcal{R}_k(S, I)$ (in comparison with \eqref{e:remainder_box_A}) because any $k$-admissible point $x$ belongs simultaneously to two boxes at scale $k$ (corresponding to the directions $(1, 1)$ and $(1, -1)$), see Figure~\ref{f:boxes_and_admissible}.

For this adaptation, we need to first introduce the concept of parallel corridors.
Two paths $\gamma$ and $\gamma'$ are said to be parallel if:
\begin{itemize}
\item they have the same length $n$,
\item $\gamma(0)$ and $\gamma'(0)$ have the same horizontal coordinate and
\item $\gamma(k + 1) - \gamma(k) = \gamma'(k + 1) - \gamma'(k)$ for every $k = 0, \dots n - 1$,
\end{itemize}
extending this notion to corridors that are based on these paths.
See the top part of Figure~\ref{f:zig_zag} for an example of parallel corridors.

The above definition of corridors is convenient because it allows us to translate results from the non-oriented setup with very few modifications, as detailed below.

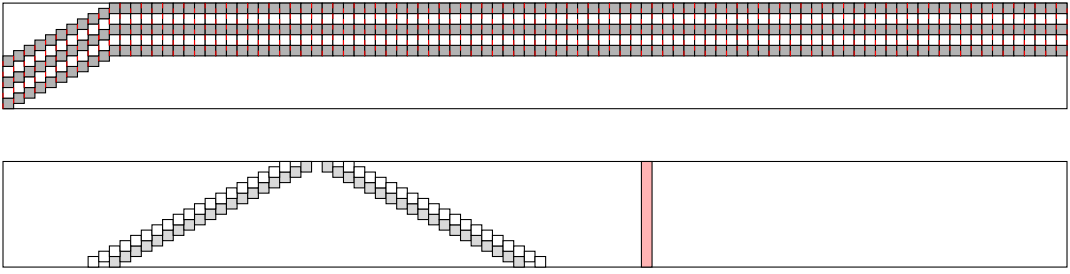
\begin{figure}
  \centering
  \begin{tikzpicture}[scale=.7]
    \draw (0,0) rectangle (20,2);
    %corredor cinza
    \foreach \j in {0,2,4}{
    \foreach \i in {0,1,...,10}{
    \draw[fill=black!30!](\i*.2,\i*.1+\j*0.2) rectangle ++(.2,.2);
    \draw[red](\i*.2,\i*.1+\j*0.2) rectangle ++(0,.1);}
    \foreach \i in {11,12,...,99}
    \draw[fill=black!30!] (\i*.2,1+\j*0.2) rectangle ++(.2,.2);
    \foreach \i in {12,14,...,100}
    \draw[red](\i*.2,1+\j*0.2)-- ++(0,.1)  (\i*.2-.2,1.1+\j*0.2)-- ++(0,.1);}
    %corredor branco
     \foreach \j in {0,2}{
    \foreach \i in {0,1,...,10}
   { \draw[fill=black!0!](\i*.2,\i*.1+.2+\j*0.2)rectangle ++(.2,.2);
     \draw[red](\i*.2,\i*.1+.2+\j*0.2)rectangle ++(0,.1);}
       \foreach \i in {11,12,...,99}
       \draw[fill=black!0!] (\i*.2,1.2+\j*0.2) rectangle ++(.2,.2);
       \foreach \i in {10,12,14,...,100}
   \draw[red](\i*.2,1.2+\j*0.2)-- ++(0,.1)  (\i*.2-.2,1.3+\j*0.2)-- ++(0,.1);}
    % \foreach \i in {0,1,...,20}
    % \draw[](\i*.4,\i*.1+.4)rectangle ++(.4,.2);
    % \foreach \i in {20,22,...,49}
    % \draw[](\i*.4,2.4)rectangle ++(.4,.2);
    % \foreach \i in {21,23,...,49}
    % \draw[](\i*.4,2.5)rectangle ++(.4,.2);
    % \draw (0,0) rectangle (20,4);
    % \foreach \i in {0,1,...,20}
    % \draw[fill=black!30!](\i*.4,\i*.1)rectangle ++(.4,.2);
    % \foreach \i in {20,22,...,49}
    % \draw[fill=black!30!](\i*.4,2)rectangle ++(.4,.2);
    % \foreach \i in {21,23,...,49}
    % \draw[fill=black!30!](\i*.4,2.1)rectangle ++(.4,.2);
    % \foreach \i in {0,1,...,20}
    % \draw[fill=black!15!](\i*.4,\i*.1+.2)rectangle ++(.4,.2);
    % \foreach \i in {20,22,...,49}
    % \draw[fill=black!15!](\i*.4,2.2)rectangle ++(.4,.2);
    % \foreach \i in {21,23,...,49}
    % \draw[fill=black!15!](\i*.4,2.3)rectangle ++(.4,.2);
    % \foreach \i in {0,1,...,20}
    % \draw[](\i*.4,\i*.1+.4)rectangle ++(.4,.2);
    % \foreach \i in {20,22,...,49}
    % \draw[](\i*.4,2.4)rectangle ++(.4,.2);
    % \foreach \i in {21,23,...,49}
    % \draw[](\i*.4,2.5)rectangle ++(.4,.2);
    \draw (0,-1) rectangle (20,-3);
    \foreach \i in {20,21,...,38}
    \draw[fill=black!15!](\i*.2-2,\i*.1-5)rectangle ++(.2,.2);
    \foreach \i in {20,21,...,38}
    \draw[fill=black!0!](\i*.2-.4-2,\i*.1-5)rectangle ++(.2,.2);
    \foreach \i in {0,1,...,18}
    \draw[fill=black!15!](\i*.2+6,-\i*.1-1)rectangle ++(.2,-.2);
    \foreach \i in {0,1,...,18}
    \draw[fill=black!0!](\i*.2+6.4,-\i*.1-1)rectangle ++(.2,-.2);
    \draw[fill=red!30!] (12,-1)rectangle ++(.2,-2);
  \end{tikzpicture}
  \caption{Above: The horizontal paths $\mathcal{C}_i$ of Lemma~\ref{l:branch}. The red segments correspond to the admissible sets, thus indicating in the figure the direction of the corresponding crossing for each box.
    Below: The four disjoint vertical corridors given by Lemma~\ref{l:branch}.  Note that if $I^1$ has a single defect, these four corridors can be taken all on the same side of the rectangle.}
  \label{f:zig_zag}
\end{figure}

Given a collection $\Sigma = \{\mathcal{C}_1, \dots \mathcal{C}_J\}$ of parallel $k$-corridors, observe that they all project horizontally to the same interval $I_\Sigma$.
Then, for any interval $I = [a, b] \subseteq I_\Sigma$ and a set $S \subseteq \{a\} \times \mathbb{Z}$, we introduce
\begin{equation}
  \label{e:remainder_oriented}
  \mathcal{R}^\Sigma_k(S, I) = \bigcup_{\mathcal{C}_i \in \Sigma} \mathcal{R} \Big( S \cap \mathcal{C}_i, \mathcal{C}_i \cap \big( I \times \mathbb{Z} \big) \Big).
\end{equation}
Observe that since we assumed the corridors to be parallel (and therefore disjoint) each event appearing in the above union is independent as in the previous model.

For Lemma~\ref{l:branch}, which is just a geometrical result, the restriction $i_1 - i_0 \geq L/2$ should be replaced by $i_1 - i_0 \geq L^1/2$ and the number of horizontal corridors is replaced by $L^y$, as expected.
Otherwise the statement holds in the oriented setup with minimal modifications to the corridors:
Figure~\ref{f:zig_zag} can help the reader understand the construction.

It is also important to observe that Remark~\ref{r:parallel} is valid in this context as well.
This is important to guarantee that we have parallel corridors and therefore we can use the definition $\mathcal{R}^\Sigma_k$ in \eqref{e:remainder_oriented}.

Having introduced the remainder $\mathcal{R}^\Sigma_k$, the rest of the proof of Theorem~\ref{t:oriented} follows exactly the same steps as the proof of Theorem~\ref{t:main}.
However, every appearance of $\mathcal{R}_k$ should be replaced with $\mathcal{R}^\Sigma_k$ for an appropriate choice of $\Sigma$ that we now detail in a case-by-case basis:

We should look for a pattern in the choices of $\Sigma$.
Perhaps something like this:
\begin{itemize}
\item If the interval $I$ appearing in $\mathcal{R}_k(S, I)$ $I$ is composed of a single $k$ interval, then $\Sigma$ will be composed of corridors of length one, all parallel to each other, so they all go up or they will all go down together.
  In this case the statements about $\mathcal{R}_k(S, I)$ should remain valid for either of these choices.
  Observe also that the symmetry with respect to vertical reflection implies that the (up or down) choice for the direction of these corridors does not alter the corresponding probabilities.
  This is the case for instance in the definition of $u_k$ and $v_k$, which include a maximum between up and down directions.
\item Similarly, in some cases the interval $I$ will be a $k$-block contained inside a single $k + 1$ interval.
  In this case, $\mathcal{R}_{k + 1}(S, I)$ will also have to be associated with a collection of corridors of length one and they should all go up or down.
  This is the case for instance in Definition~\ref{d:recovery} and Lemma~\ref{l:recovery}, where the bound on the probability holds for both up and down corridors.
\item In certain situations, the set $I$ in the remainder $\mathcal{R}_k(S, I)$ is a $k$-block with many $k$-intervals.
  In such cases, all the probabilities of events involving $\mathcal{R}_k(S, I)$ should be replaced by the suppremum over all collections of parallel corridors $\Sigma$.
  This is what happens, for instance, in the second claim of Lemma~\ref{l:corridor} and Lemma~\ref{l:algebra}.
\item The proof of Lemma~\ref{l:recovery} goes almost unchanged.
  In the first step (``regrouping''), whenever we mention $\mathcal{R}_k$, we use $\Sigma$ that goes up if we are trying to ``recover up'' and $\Sigma$ that goes down if we are trying to ``recover down''.
  In the second step (``branching''), we use the $\Sigma$ given by Lemma~\ref{l:branch}.
\end{itemize}

\section{Open problems}

One important class of percolation models that include two sources of randomness is given by Cox-Processes, see for instance Chapter 3 of \cite{dhillon2020poisson}.
In a Cox-Process, a random measure $\mu$ is constructed on $\mathbb{R}^2$, which is then used as the intensity of a Poisson Point Process of balls with unit radius.

If the random measure $\mu$ has good mixing properties, the phase transition for the corresponding percolation has been established by \cite{hirsch2019continuum}, see Theorem~2.4.

On the other extreme, if the random measure $\mu$ is given by a Poisson Process of lines parallel to the coordinate axes, the model becomes similar to the process studied here and has been treated in \cite{jahnel2022continuum}.
However, to the best of our knowledge it is currently open whether there is a non-trivial phase transition for the vacant set left by the Cox-Process studied in \cite{jahnel2022continuum} as we allow the balls to have random radii with non-compact support (say with an exponential distribution).

Another interesting model to consider along these lines is the Poisson Line Cox Process, see \cite{dhillon2020poisson}, for which we leave the following question.
Does it exhibit a non-trivial phase transition?
The question can be asked both for the occupied and vacant set left by the balls.

We hope that the techniques introduced in this article can help further investigate these and other related problems.

\bibliographystyle{bib/jcamsalpha}
\bibliography{bib/all}

\def\cprime{$'$}
\begin{thebibliography}{10}

\bibitem{10.1007/978-981-15-0302-3_4}
Riddhipratim Basu, Vladas Sidoravicius, and Allan Sly.
\newblock Scheduling of non-colliding random walks.
\newblock In Vladas Sidoravicius, editor, {\em Sojourns in Probability Theory
  and Statistical Physics - III}, pages 90--137, Singapore, 2019. Springer
  Singapore.

\bibitem{basu2014lipschitz}
Riddhipratim Basu and Allan Sly.
\newblock Lipschitz embeddings of random sequences.
\newblock {\em Probability Theory and Related Fields}, 159(3-4):721--775, 2014.

\bibitem{beaton2021alignment}
Nicholas~R Beaton, Geoffrey~R Grimmett, and Mark Holmes.
\newblock Alignment percolation.
\newblock {\em Mathematical Physics, Analysis and Geometry}, 24(1):1--19, 2021.

\bibitem{bollobas2006percolation}
B.~Bollob{\'a}s and O.~Riordan.
\newblock {\em Percolation}.
\newblock Cambridge University Press, 2006.

\bibitem{bramson1991}
Maury Bramson, Rick Durrett, and Roberto~H. Schonmann.
\newblock The contact processes in a random environment.
\newblock {\em Ann. Probab.}, 19(3):960--983, 07 1991.

\bibitem{PSP:2048852}
S.~R. Broadbent and J.~M. Hammersley.
\newblock Percolation processes.
\newblock {\em Mathematical Proceedings of the Cambridge Philosophical
  Society}, 53:629--641, 7 1957.

\bibitem{BK89}
R.~M. Burton and M.~Keane.
\newblock Density and uniqueness in percolation.
\newblock {\em Comm. Math. Phys.}, 121(3):501--505, 1989.

\bibitem{delima2022dependent}
Bernardo N.~B. de~Lima, Vladas Sidoravicius, and Maria~Eulália Vares.
\newblock Dependent percolation on $\mathbb{Z}^2$, 2022.

\bibitem{dhillon2020poisson}
Harpreet~S Dhillon and Vishnu~Vardhan Chetlur.
\newblock Poisson line cox process: Foundations and applications to vehicular
  networks.
\newblock {\em Synthesis Lectures on Learning, Networks, and Algorithms},
  1(1):1--149, 2020.

\bibitem{duminil2018brochette}
Hugo Duminil-Copin, Marcelo~R. Hil{\'a}rio, Gady Kozma, and Vladas
  Sidoravicius.
\newblock Brochette percolation.
\newblock {\em Israel Journal of Mathematics}, 225(1):479--501, 2018.

\bibitem{duminil2016new}
Hugo Duminil-Copin and Vincent Tassion.
\newblock A new proof of the sharpness of the phase transition for {B}ernoulli
  percolation and the {I}sing model.
\newblock {\em Communications in Mathematical Physics}, 343(2):725--745, 2016.

\bibitem{gacs2011clairvoyant}
Peter G{\'a}cs.
\newblock Clairvoyant scheduling of random walks.
\newblock {\em Random Structures \& Algorithms}, 39(4):413--485, 2011.

\bibitem{Gri99}
Geoffrey Grimmett.
\newblock {\em Percolation}, volume 321 of {\em Grundlehren der Mathematischen
  Wissenschaften [Fundamental Principles of Mathematical Sciences]}.
\newblock Springer-Verlag, Berlin, second edition, 1999.

\bibitem{haggstrom2001coloring}
Olle H{\"a}ggstr{\"o}m.
\newblock Coloring percolation clusters at random.
\newblock {\em Stochastic processes and their applications}, 96(2):213--242,
  2001.

\bibitem{harris1960lower}
Theodore~E. Harris.
\newblock A lower bound for the critical probability in a certain percolation
  process.
\newblock In {\em Mathematical Proceedings of the Cambridge Philosophical
  Society}, volume~56, pages 13--20. Cambridge University Press, 1960.

\bibitem{HSS19}
M.~Hilario, M.~Sá, R.~Sanchis, and A.~Teixeira.
\newblock Phase transition for percolation on randomly stretched lattice, 2019.

\bibitem{hilario2019linepercolation}
M.R. Hilário and V.~Sidoravicius.
\newblock Bernoulli line percolation.
\newblock {\em Stochastic Processes and their Applications},
  129(12):5037--5072, 2019.

\bibitem{hirsch2019continuum}
Christian Hirsch, Benedikt Jahnel, and Elie Cali.
\newblock Continuum percolation for cox point processes.
\newblock {\em Stochastic Processes and their Applications},
  129(10):3941--3966, 2019.

\bibitem{Hoffman2005}
Christopher Hoffman.
\newblock Phase transition in dependent percolation.
\newblock {\em Communications in Mathematical Physics}, 254(1):1--22, Feb 2005.

\bibitem{haggstrom_2000}
Olle Häggström.
\newblock Markov random fields and percolation on general graphs.
\newblock {\em Advances in Applied Probability}, 32(1):39–66, 2000.

\bibitem{jahnel2022continuum}
Benedikt Jahnel, Sanjoy~Kumar Jhawar, and Anh~Duc Vu.
\newblock Continuum percolation in a nonstabilizing environment, 2022.

\bibitem{jonasson2000percolation}
Johan Jonasson, Elchanan Mossel, and Yuval Peres.
\newblock Percolation in a dependent random environment.
\newblock {\em Random Structures \& Algorithms}, 16(4):333--343, 2000.

\bibitem{Kesten22}
Harry Kesten, Vladas Sidoravicius, and Maria~Eul{\'a}lia Vares.
\newblock {Oriented percolation in a random environment}.
\newblock {\em Electronic Journal of Probability}, 27(none):1 -- 49, 2022.

\bibitem{10.1214/aop/1024404279}
T.~M. Liggett, R.~H. Schonmann, and A.~M. Stacey.
\newblock {Domination by product measures}.
\newblock {\em The Annals of Probability}, 25(1):71 -- 95, 1997.

\bibitem{marchand2022corner}
Régine Marchand, Irène Marcovici, and Pierrick Siest.
\newblock Corner percolation with preferential directions, 2022.

\bibitem{mccoy68}
Barry~M. McCoy and Tai~Tsun Wu.
\newblock Theory of a two-dimensional ising model with random impurities. i.
  thermodynamics.
\newblock {\em Phys. Rev.}, 176:631--643, Dec 1968.

\bibitem{moseman2008form}
Elizabeth~R Moseman and Peter Winkler.
\newblock On a form of coordinate percolation.
\newblock {\em Combinatorics, Probability and Computing}, 17(6):837--845, 2008.

\bibitem{NTT19}
P.~Nolin, V.~Tassion, and A.~Teixeira.
\newblock {No exceptional words for Bernoulli percolation}.
\newblock {\em arXiv e-prints}, page arXiv:1911.04816, Nov 2019.

\bibitem{pete2008}
G{\'a}bor Pete.
\newblock {Corner percolation on $\mathbb{Z}^2$ and the square root of 17}.
\newblock {\em The Annals of Probability}, 36(5):1711 -- 1747, 2008.

\bibitem{Szn09}
Alain-Sol Sznitman.
\newblock Vacant set of random interlacements and percolation.
\newblock {\em Ann. of Math. (2)}, 171(3):2039--2087, 2010.

\bibitem{TW10b}
Johan Tykesson and David Windisch.
\newblock Percolation in the vacant set of {P}oisson cylinders.
\newblock {\em Probab. Theory Related Fields}, 154(1-2):165--191, 2012.

\bibitem{van2010percolation}
Remco Van Der~Hofstad.
\newblock Percolation and random graphs.
\newblock {\em New perspectives in stochastic geometry}, pages 173--247, 2010.

\bibitem{winkler2000dependent}
Peter Winkler.
\newblock Dependent percolation and colliding random walks.
\newblock {\em Random Structures \& Algorithms}, 16(1):58--84, 2000.

\end{thebibliography}

\end{document}